\newcommand{\R}{\mathbb{R}}
\newcommand{\N}{\mathbb{N}}
\newcommand{\Z}{\mathbb{Z}}
\newcommand{\SL}{{\rm SL}}
\newcommand{\GL}{\rm GL}
\newcommand{\Mat}{{\rm Mat}}
\newcommand{\Ecal}{\mathcal{E}}
\newcommand{\Leb}{{\rm Leb}}
\newcommand{\sabs}[1]{\left| #1 \right|} 
\newcommand{\abs}[1]{\bigl| #1 \bigr|} 
\newcommand{\norm}[1]{\lVert#1\rVert} 
\newcommand{\normtwo}[1]{
	{\left\vert\kern-0.25ex\left\vert\kern-0.25ex\left\vert #1
		\right\vert\kern-0.25ex\right\vert\kern-0.25ex\right\vert} }
\newcommand{\less}{\lesssim}
\newcommand{\ep}{\epsilon}
\newcommand{\la}{\lambda}
\newcommand{\om}{\omega}
\newcommand{\uom}{\underline{\omega}}
\newcommand{\uomega}{\underline{\omega}}
\newcommand{\cocycles}{\mathcal{M}^\ast}
\newcommand{\mostv}{\mathfrak{v}}
\newcommand{\Pp}{\mathbb{P}}
\newcommand{\Ccal}{\mathcal{C}}
\newcommand{\ind}{\mathds{1}}
\newcommand{\B}{\mathscr{B}}
\newcommand{\Bscr}{\mathscr{B}}
\newcommand{\Nscr}{\mathscr{N}}
\newcommand{\uA}{\underline{A}}
\newcommand{\Ascr}{\mathscr{A}}
\newcommand{\Mscr}{\mathcal{M}}
\newcommand{\Ainv}{\Ascr_{\mathrm{inv}}}
\newcommand{\Asing}{\Ascr_{\mathrm{sing}}}
\newcommand{\Ssing}{\Sigma_{\mathrm{sing}}}
\newcommand{\Sinv}{\Sigma_{\mathrm{inv}}}
\newcommand{\Matdm}{{\rm Mat}^+_2(\R)}
\newcommand\restr[2]{{
		\left.\kern-\nulldelimiterspace 
		#1 
		\vphantom{\big|} 
		\right|_{#2} 
}}
\theoremstyle{plain}
\newtheorem{theorem}{Theorem}[section]
\newtheorem{proposition}{Proposition}[section]
\newtheorem{corollary}{Corollary}[section]
\newtheorem{lemma}{Lemma}[section]
\theoremstyle{definition}
\newtheorem{definition}{Definition}[section]
\theoremstyle{definition}
\newtheorem{remark}{Remark}[section]
\newtheorem{example}[theorem]{Example}
\numberwithin{equation}{section}
\newcommand{\Prob}{\mathrm{Prob}}
\newcommand{\rank}{\mathrm{rank}}
\newcommand{\Range}{\mathrm{Range}}
\newcommand{\Qop}{\mathcal{Q}}
\newcommand{\Qinv}{\Qop_{\rm inv}}
\newcommand{\Qsing}{\Qop_{\rm sing}}
\newcommand{\Kinv}{K_{{\rm inv}}}
\title[Random two dimensional cocycles]{Random 2D linear  cocycles II:\\statistical properties}
\date{}
\begin{document}

\author[P. Duarte]{Pedro Duarte}
\address{Departamento de Matem\'atica and CEMS.UL\\
Faculdade de Ci\^encias\\
Universidade de Lisboa\\
Portugal
}
\email{pmduarte@fc.ul.pt}

\author[M. Dur\~aes]{Marcelo Dur\~aes}
\address{Departamento de Matem\'atica, Pontif\'icia Universidade Cat\'olica do Rio de Janeiro (PUC-Rio), Brazil}
\email{accp95@gmail.com}

\author[T. Graxinha]{Tom\'e  Graxinha}
\address{Departamento de Matem\'atica and CEMS.UL\\
	Faculdade de Ci\^encias\\
	Universidade de Lisboa\\
	Portugal}
\email{graxinhatome@hotmail.com}

\author[S. Klein]{Silvius Klein}
\address{Departamento de Matem\'atica, Pontif\'icia Universidade Cat\'olica do Rio de Janeiro (PUC-Rio), Brazil}
\email{silviusk@puc-rio.br}

\begin{abstract}
Consider the space of two dimensional random linear cocycles over a shift in finitely many symbols, with at least one singular and one invertible matrix.
We provide an explicit formula for the unique stationary measure associated to such cocycles and establish a Furstenberg-type formula characterizing the Lyapunov exponent. Using the spectral properties of the corres\-ponding Markov operator and a parameter elimination argument, we prove that Lebesgue almost every cocycle in this space satisfies large deviations estimates and a central limit theorem.
\end{abstract}

\maketitle


\section{Introduction and statements}\label{intro}
Let $\nu$ be a compactly supported measure on  the space of matrices $\Mat_m (\R)$, $m\ge2$.
Consider an i.i.d. sequence of matrices $\{A_n\}_{n\in\Z}$ in  $\Mat_m (\R)$, with common distribution $\nu$, and the corresponding multiplicative process $\Pi_n := A_n \cdots A_2 \, A_1$. Under a general integrability assumptions, namely $\int \log^+ \norm{A} d \nu (A) < \infty$, by Furstenberg-Kesten's theorem we have that
\begin{equation}\label{def L1}
\lim_{n\to\infty} \frac 1 n \log \norm{\Pi_n} =: L_1 (\nu)
\end{equation}
exists almost surely and it is a constant. We call this constant $L_1 (\nu)$ the first Lyapunov exponent of the multiplicative process determined by the measure $\nu$. 

Some of the central problems regarding Lyapunov exponents of such processes concern the regularity of the map $\nu \mapsto L_1 (\nu)$, as well as the availability of other probability laws. More specifically, the almost sure convergence (which implies convergence in probability)  in~\eqref{def L1} is the multiplicative analogue of the law of large numbers. It is then natural to ask if there is an explicit rate of convergence in probability, that is, if a large deviations type (LDT) estimate holds. Moreover, it is also natural to ask if a central limit theorem (CLT) holds as well.

Multiplicative processes as described above appear naturally in the study of the discrete Schr\"odinger operator with random potential on the integer lattice (or, more generally, on a band-integer lattice). 

More precisely, let $\{v_n\}_{n\in\Z}$ be an i.i.d. sequence of real-valued random variables and consider the operator $H$ on $l^2 (\Z, \R)$ given by
\begin{equation}\label{S op}
(H u)_n := - (u_{n+1} + u_{n-1}) + v_n \, u_n
\end{equation}
for $u = \{u_n\}_{n\in\Z} \in l^2 (\Z, \R)$.

The Schr\"odinger (or eigenvalue) equation $H u = t \, u$ becomes a first-order matrix recurrence. Its (formal) solution is given by
$$\begin{pmatrix}
u_{n+1}\\
u_{n}
\end{pmatrix} 
=
\begin{pmatrix}
v_n-t & -1\\
1 & 0
\end{pmatrix} 
\cdots
\begin{pmatrix}
v_1-t & -1\\
1 & 0
\end{pmatrix} 
\begin{pmatrix}
u_{1}\\
u_{0}
\end{pmatrix} 
$$
thus leading to a multiplicative process in $\SL_2 (\R)$ generated by the i.i.d. sequence of random matrices $\{A_n\}_{n\in\Z}$ where 
$A_n = \begin{pmatrix}
v_n-t & -1\\
1 & 0
\end{pmatrix}$.

More general operators than~\eqref{S op} (e.g. Jacobi operators) or band lattice versions thereof lead to multiplicative processes in other, more general semigroups of matrices. Moreover, the properties of their Lyapunov exponents as discussed above (especially their H\"older continuity and the availability of large deviations) are directly relevant in the study of the spectral properties of the corresponding operator. See the survey~\cite{David-survey} for a nice review of these connections.  

\medskip

Multiplicative processes of this or a more general type (i.e. non independent) can be studied in the abstract framework of linear cocycles in ergodic theory. 
A base measure preserving dynamical system $(X,\mu,f)$, where $X$ is a compact metric space, $f \colon X \to X$ is continuous transformation and $\mu$ is an $f$-invariant, ergodic Borel probability measure on $X$, together with a bounded, measurable fiber map $A \colon X\to \Mat_m (\R)$ determine the skew-product transformation $F \colon X\times\R^m \to X\times\R^m$,
$$F(x, v)=(f (x), \, A(f x) v) \, .$$
The new dynamical system $F$ is called a linear cocycle. Its iterates are
$F^n(x, v)=(f^n (x), \, A^n (x) v)$, where for all $n\in\N$, 
$$A^n (x):= A(f^{n} x)  \cdots A(f^2 x) \,  A(f x) \, .$$

The first Lyapunov exponent of a linear cocycle $F$ measures the asymptotic exponential growth of its fiber iterates. It is defined via the Furstenberg-Kesten theorem as the $\mu$-a.e. limit
$$ L_1 (F) = L_1 (A) := \lim_{n\to\infty} \, \frac{1}{n} \, \log \norm{ A^n (x) } \, $$
provided  $A$ satisfies the integrability condition $\int_X \log^+ \norm{A}  d \mu < \infty$.

Moreover, if the norm (that is, the first singular value) of the fiber iterates is replaced by the second singular value, the corresponding $\mu$-a.e. limit above still exists, it is called the second Lyapunov exponent and it is denoted by $L_2 (F) = L_2 (A)$.

\smallskip

An i.i.d. multiplicative process in $\Mat_m (\R)$  can be identified with a locally constant linear cocycle over a Bernoulli shift. Indeed, let $\Ascr$, the space of symbols (or alphabet),  be a compact metric space  and let $p$ be a probability measure on $\Ascr$. Denote by $X:=\Ascr^\Z$ the space of bi-infinite sequences $\om = \{\om_n\}_{n\in\Z}$ in this alphabet, which we endow with  the product measure $\mu = p^\Z$. Let $\sigma \colon X \to X$ be the corresponding forward shift $\sigma \om = \{\om_{n+1}\}_{n\in\Z}$. Then $(X, \mu, \sigma)$ is a measure preserving, ergodic dynamical system called a Bernoulli shift.  A continuous function $\Ascr \ni \om_0 \mapsto A_{\om_0} \in  \Mat_m (\R)$ determines the locally constant fiber map $A \colon X \to \Mat_m (\R)$, $A (\om) = A_{\om_0}$, which in turn determines a linear cocycle over the Bernoulli shift, referred to as a (random) Bernoulli cocycle. 

In this paper we restrict to the case of a {\em finite} alphabet and two-dimensional matrices. That is, $\Ascr =\{1,\ldots, k\}$ for some $k\ge2$, $p = (p_1, \ldots, p_k)$ is a probability vector with $p_i > 0$ for all $i$, while the fiber map $A$ is determined by a $k$-tuple $\underline{A} = (A_1, \ldots, A_k) \in \Mat_2 (\R)^k$. We identify the corresponding Bernoulli cocycle with the tuple $\uA$ and
denote by $L_i (\underline{A}) = L_i (\underline{A}, p)$, $i \in \{ 1,2 \}$, its Lyapunov exponents.

\smallskip

More generally, we also consider linear cocycles over a Markov shift. That is, let $P$ be a  (left) stochastic matrix, i.e. $P=(p_{i j})_{1\leq i, j\leq k}$ with $p_{ij}\geq 0$
and $\sum_{i=1}^k p_{i j} = 1$ for all $1\le j \le k$.
Given a $P$-stationary probability vector $q$, i.e.,
$q=P\, q$, the pair $(P, q)$ determines a probability measure $\mu$ on $X$  for which the process
$\xi_n \colon X\to \Ascr$, $\xi_n(\omega):=\omega_n$ is
a stationary Markov chain in $\Ascr$ with probability transition 
matrix $P$ and initial distribution law $q$. 
Then 
$(X, \sigma, \mu)$ is a measure preserving dynamical system called a Markov shift.

Given a finite word $(i_0, i_1,\ldots, i_n)\in \Ascr^{n+1}$ and $k\in\Z$, the set 
$$[k; i_0, i_1, \ldots, i_n] := \left\{ \om \in X  \colon \om_{k+l} = i_l \quad  \text{for all } 0 \le l \le n \right\} $$
is called a cylinder of length $n+1$ in $X$.
Its (Markov) measure is then
$$\mu \left( [k; i_0, i_1, \ldots, i_n] \right) = q_{i_0} \, p_{i_i, i_0} \, \cdots \, p_{i_n, i_{n-1}} \, .$$


We will assume that the matrix $P$ is primitive, i.e. there exists $n\ge1$ such that  $p^n_{i j}>0$ for all entries of the power matrix $P^n$. Then
$\lim_{n\to \infty} p^n_{i j} = q_i > 0$
for all $1\le i, j \le k$ and the corresponding Markov shift $(X, \mu, \sigma)$ is ergodic and mixing, see~\cite[Theorems 7.2.8, 7.2.11]{FET}.

As before, a $k$-tuple $\underline{A} = (A_1, \ldots, A_k) \in \Mat_2 (\R)^k$ determines a locally constant linear cocycle over this base dynamics, which we refer to as a (random) Markov cocycle. Its Lyapunov exponents are denoted by $L_i (\underline{A}) = L_i (\underline{A}, P, q)$.

\smallskip

When we restrict to {\em invertible} random linear cocycles, that is, when $\underline{A} = (A_1, \ldots, A_k) \in \GL_2 (\R)^k$, the problems on Lyapunov exponents formulated above are well understood, having been the subject of many works throughout the years. See Table~\ref{table} below for a  more detailed picture of available results. The case when all matrices $A_1, \ldots, A_k$ defining the cocycle $\uA$ are singular (noninvertible) is  treated  easily.

\smallskip

We thus consider the case of random cocycles with {\em both singular and invertible} components, and assume, moreover, that the invertible components have positive determinant. In a recent work, see~\cite{DDGK-paper1}, we show that the first  Lyapunov exponent of such cocycles behaves rather differently from the invertible case:  when it comes to its regularity, it exhibits a dichotomy in the spirit of Bochi-Ma\~n\'e's, from being analytic to being discontinuous, see~\cite[Theorem 1.1 and Corollary 1.1]{DDGK-paper1}. 

\smallskip

As explained in~\cite{DDGK-paper1}, random cocycles with both singular and inverti\-ble components can be used to provide a formal model  of a one-parameter family of Bernoulli cocycles $\uA_t \in  \Mat_2 (\R)^2$ associated to the random Schr\"odinger operator $H$ whose potential takes two values: $\infty$ and a finite value $a$, as in Craig and Simon~\cite[Example 3]{CraigSimon}.\footnote{Other versions of such an operator (with more values) can similarly be considered.} 

Indeed, given a probability vector $(p, 1-p)$ with $p\in (0, 1)$ and $a\in\R$, define the random cocycle $\uA_t =(A_t(1), A_t(2)) \in \Mat_2(\R)^2$ by $A_t(1) \equiv \begin{bmatrix}
	1 & 0 \\ 0 & 0 
\end{bmatrix}$ and
$A_t(2)=\begin{bmatrix}
	a-t & -1 \\ 1 & 0 
\end{bmatrix}$. 

Moreover, let $H_\la$ be the Schr\"odinger operator with random potential $\{v_j\}_{j\in\Z}$ taking the value $\la$ with probability $p$ and the value $a$ with probability $1-p$, and denote by $S_{t, \la}(j) = \begin{bmatrix}
	v_j-t &-1\\  1 & 0 	
\end{bmatrix} $ its corres\-ponding random Schr\"odinger cocycle. 

Then $\uA_t$ is the 
asymptotic limit as $\la\to\infty$ of the {\em rescaled} random Schr\"odinger cocycle 
$\uA_{t,\la}=(\uA_{t,\la}(1), \uA_{t,\la}(2))$ where $\uA_{t,\la}(j)=\la_j^{-1}\, S_{t, \la} (j)$ and $\la_j =\la$ or $1$ with probability $p, 1-p$. 
Note the following relation between the Lyapunov exponents:  $L_1 (S_{t, \la})= p \log \la + L_1 (\uA_{t,\la})$.

For each of the limiting cocycles $\uA_t$, one component is invertible and the other one is singular. The map $t\mapsto L_1 (\uA_t)$ is a.e. discontinuous on $[a-2, a+2]$, the limit (as $\la\to\infty$) of the spectrum of the operator $H_\la$.  This  correlates well with the fact (proven by Craig and Simon) that the integrated density of states of the operator $H$  is discontinuous. See ~\cite[Example 5.4]{DDGK-paper1} for more details.

%

\medskip

The main purpose of this paper is to establish statistical properties (large deviations type estimates and a central limit theorem) for random (Bernoulli and Markov) cocycles with both singular and invertible components. 
Let $\Ascr = \Asing \sqcup \Ainv$ be a partition into two nonempty sets of the alphabet and let 
\begin{align*}
\Mscr :=  \big\{  \uA = (A_1, \ldots, A_k) \in \Mat_2(\R)^\Ascr  \colon  & \rank A_i = 1 \quad \forall i\in\Asing ,  \\
   & \det(A_j) >  0 \quad  \forall j\in\Ainv  \big\} .
\end{align*}
The set $\Mscr$ is an analytic manifold (see the end of Section 4 in~\cite{DDGK-paper1}), thus it is equipped with a Lebesgue measure.

\begin{theorem}
\label{thm: ldt intro}
For Lebesgue almost every random cocycle $\uA\in \Mscr$, 
$L_1(\uA)> - \infty$ and  for every  $\varepsilon>0$ we have
$$ \mu \left\{ \omega\in X \colon \abs{ \frac{1}{n}\, \log \norm{A^n(\omega)}-L_1(\uA) }>\varepsilon   \right\}\le C \,  e^{-c_0 (\varepsilon)\, n^{1/3}} $$
where $C< \infty$, $c_0 (\varepsilon)>0$ is an explicit function of $\varepsilon$ and $\mu$ is the Bernoulli (or Markov) measure on $X$ defined above.
\end{theorem}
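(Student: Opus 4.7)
The plan is to combine the spectral theory of the Markov operator $\Qop$ associated with the random cocycle (developed in earlier sections of the paper) with a truncation-and-optimization scheme that handles the unboundedness of $\log\|A\hat v\|$ caused by the singular components, and to reduce the ``Lebesgue almost every'' conclusion to a positive-codimension genericity statement via the parameter elimination argument announced in the abstract. First, using the explicit stationary measure $\eta$ on the projective bundle $X\times\Proj$ and the Furstenberg-type formula, I would write
$$L_1(\uA) = \int \log\|A(\omega)\hat v\| \, d\eta(\omega,\hat v)$$
so that $\frac{1}{n}\log\|A^n(\omega)\|$ coincides, up to controlled boundary contributions coming from the choice of optimal direction, with the Birkhoff average of the observable $\varphi(\omega,\hat v):=\log\|A(\omega)\hat v\|$ along the lifted projective dynamics. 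The finiteness $L_1(\uA)>-\infty$ then reduces to $\varphi\in L^{1}(\eta)$, which is read off the explicit form of $\eta$ together with the assumption $\Ainv\ne\emptyset$ (the invertible components push $\eta$-mass away from the finitely many degenerate projective directions).

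Next, I would establish, for Lebesgue-a.e.\ $\uA\in\Mscr$, a spectral gap for $\Qop$ on a suitable Hölder (or Lipschitz) Banach space of functions on $\Ascr\times\Proj$: a decomposition $\Qop=\Pi_\eta+R$ where $\Pi_\eta f=\int f\,d\eta$ is rank one and the spectral radius of $R$ is strictly less than $1$. The obstructions to this gap --- common invariant directions for the invertible components, finite invariant projective subsets, and resonances between images of singular matrices and fixed or periodic directions of the invertible ones --- cut out a real-analytic subvariety of the analytic manifold $\Mscr$; the parameter elimination argument, based on the stratification of $\Mscr$ developed in \cite{DDGK-paper1}, should show that this subvariety has positive codimension and hence zero Lebesgue measure. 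This is the step I expect to be the main obstacle, since both enumerating every failure mode of the gap in the mixed singular/invertible setting and executing the elimination with a uniform codimension bound are substantially more subtle than in the classical invertible theory, where Furstenberg's strong irreducibility/contraction criterion handles these issues in one shot.

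Finally, since $\varphi$ is not bounded below, I would truncate at level $-M$, set $\varphi_M:=\max(\varphi,-M)$, and decompose
$$\frac{1}{n}\log\|A^n\|-L_1(\uA)=\Big(\frac{1}{n}\sum_{k=0}^{n-1}\varphi_M\circ \hat F^{k}-\int\varphi_M\,d\eta\Big)+\Big(\int\varphi_M\,d\eta-L_1(\uA)\Big)+\mathrm{err}_n,$$
where $\hat F$ denotes the lifted projective skew-product and $\mathrm{err}_n$ collects the contribution of iterates where $\varphi<-M$. The spectral gap from the previous step yields (via a standard Nagaev--Guivarc'h / cumulant estimate) a Hoeffding-type bound $\le C\,e^{-c\varepsilon^{2} n/M^{2}}$ for the first, bounded term; the tail estimate $\eta\{\varphi<-M\}\lesssim e^{-cM}$, also extracted from the explicit form of $\eta$, controls the remaining two terms by $C n\,e^{-cM}$. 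Optimizing $M$ so as to balance the two competing rates $e^{-c\varepsilon^{2} n/M^{2}}$ and $n\,e^{-cM}$ forces $M\asymp n^{1/3}$ and produces exactly the stretched-exponential rate $e^{-c_0(\varepsilon)\,n^{1/3}}$ announced in the statement.
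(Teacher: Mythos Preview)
Your truncation-and-optimization scheme is exactly the one the paper uses, and your balance $M\asymp n^{1/3}$ is the right one. However, you have mislocated the obstacle, and the parameter elimination mechanism you propose would not reach it.

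The spectral step is \emph{not} where genericity is needed. In fact the paper proves (Theorem~\ref{Uniform Ergodicity}) that the Markov operator $\Qop$ is uniformly ergodic on all of $L^\infty(\Ascr\times\Pp^1)$ for \emph{every} cocycle in $\Mscr$, with no irreducibility or contraction hypothesis whatsoever. The reason is structural: since $\Qop=\Qinv+\Qsing$ and $\Qsing$ sends every observable to a function independent of the projective variable (the singular matrices reset the projective coordinate to their range), one gets $\Qop^n-\Qinv^n$ landing in $L^\infty(\Ascr)$, while $\norm{\Qinv^n}_\infty\le (1-q)^{n/N}$ because the mass leaks into $\Asing$. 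So there is no subvariety of ``bad'' cocycles to eliminate at this stage; your enumeration of failure modes (common invariant directions, resonances, etc.) is simply irrelevant here.

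The place where ``Lebesgue almost every'' genuinely enters is the tail control of the unbounded observable, and this is \emph{not} an algebraic condition. Your claimed bound $\eta\{\varphi<-M\}\lesssim e^{-cM}$ amounts to asking that the atoms $\hat A^n(\underline\omega)\hat r_s$ of $\eta$ stay exponentially well separated from the kernels $\hat k_i$; this is a Diophantine-type condition on $\uA$, and the set where it fails is not a finite union of analytic subvarieties but something more like a Liouville set. The paper handles this by a completely different elimination: it foliates $\Mscr$ by one-parameter families $t\mapsto\uA_t$ obtained by post-composing the invertible components with rotations $R_t$, so that the curves $t\mapsto \hat A_t^n(\underline\omega)\hat r_s$ have \emph{uniformly positive winding} in $\Pp^1$. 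This converts the projective smallness $\{\hat v: \norm{A_i v}<e^{-N}\}$ into a parameter set of Lebesgue measure $\lesssim e^{-N}$ (Lemmas~\ref{lemma finite arcs}--\ref{lemma Leb measure of t in I}), yields the tail and $L^2$ bounds \emph{after integration in $t$} (Lemma~\ref{lem exp small measure}), and then Borel--Cantelli over scales promotes this to an a.e.-in-$t$ statement (Theorem~\ref{Parametric LDT}); a submersion/Fubini argument finally passes from a.e.\ $t$ to a.e.\ $\uA\in\Mscr$.

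There is a second, smaller gap: your bound $\mathrm{err}_n\lesssim n\,e^{-cM}$ cannot hold as stated, because on the iterates where $\varphi<-M$ the quantity $|\varphi-\varphi_M|$ is not bounded by any constant (it can be arbitrarily large, and is $+\infty$ on null words). The paper closes this via Cauchy--Schwarz against an $L^2$ bound $\int\varphi_t^2\,d\eta_t\,dt\le C$ (again obtained only after integrating over the winding parameter), which is what produces the exponent $N/3$ rather than $N$ in Lemma~\ref{lemma Estimates phi and psi}.
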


For the next result, in the Markov case only, we assume that the subshift of finite type $\uA_{\mathrm{inv}}$, defined by the same underlying graph but retaining only edges labeled by invertible matrices in $\uA$,  is topologically mixing. 
This technical condition allows the use of Avila--Bochi--Yoccoz theory via the work in~\cite{DDGK-paper1}. 
As in that work, the assumption is purely technical: it can be removed here, in~\cite{DDGK-paper1}, as well as in Avila--Bochi--Yoccoz, where uniform hyperbolicity is characterized by invariant multi-cones. 
For simplicity, we impose it and leave the general Markov case for future work. 
In the Bernoulli case, this assumption is unnecessary.

\begin{theorem}
\label{thm: clt intro}
For Lebesgue almost every random cocycle $\uA\in \Mscr$, 
$L_1(\uA)> - \infty$ and there exists $\sigma>0$ such that  the following convergence in distribution to the normalized Gaussian holds:
$$ \frac{ \log \norm{A^n}-n\,L_1(\uA) }{\sigma \, \sqrt{n}} \stackrel{d}{\to}  \Nscr(0,1)  \, .$$
\end{theorem}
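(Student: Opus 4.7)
The plan is to establish the central limit theorem via the Nagaev–Guivarc'h perturbative approach applied to the Markov operator $\Qop$ associated with the random product of matrices in $\Mscr$. Since the large deviations theorem (Theorem~\ref{thm: ldt intro}) is already built upon the spectral properties of $\Qop$ together with a parameter elimination argument, the same framework should yield the CLT, with additional work to extract the limiting Gaussian.

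First, I will rewrite $\log\norm{A^n(\omega)}-n\, L_1(\uA)$ as a Birkhoff sum. Using the Furstenberg-type formula alluded to in the abstract,
$$L_1(\uA) = \int \log\frac{\norm{A_{\omega_0}v}}{\norm{v}}\, d\eta(\omega,\hatv),$$
together with the telescoping identity $\log\norm{A^n(\omega) v} = \sum_{k=0}^{n-1} \log\norm{A_{\omega_k}\hatv_k}$, the fluctuations become, up to an additive boundary term that is negligible at the CLT scale, the Birkhoff sum $S_n\varphi := \sum_{k=0}^{n-1} \varphi\circ T^k$ of the centered observable $\varphi(\omega_0,\hatv) := \log\norm{A_{\omega_0} v} - L_1(\uA)$ for the skew-product Markov dynamics $T$ on $\Ascr\times\Proj$.

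Second, I will invoke the spectral gap of $\Qop$ on a suitable Banach space of Hölder-continuous functions, which is established earlier in the paper. Consider the family of twisted operators $\Qop_z$ obtained by multiplying with $e^{z\varphi}$ and then applying $\Qop$. By analytic perturbation theory, $\Qop_z$ inherits a simple dominating eigenvalue $\lambda(z)$ depending analytically on $z$ in a neighborhood of $0$, with $\lambda(0)=1$ and, by centering of $\varphi$, $\lambda'(0)=0$. Setting $\sigma^2 := \lambda''(0)$, a standard Nagaev–Guivarc'h computation then yields
$$\EE\bigl[\,e^{\frac{it}{\sqrt{n}}\, S_n\varphi}\,\bigr] = \lambda(it/\sqrt{n})^n\,(1+o(1)) \longrightarrow e^{-\sigma^2 t^2/2},$$
and Lévy's continuity theorem concludes the convergence in distribution.

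The main obstacle will be proving that $\sigma>0$ for Lebesgue-almost every $\uA\in\Mscr$. Non-degeneracy of the variance amounts to showing that $\varphi$ is not cohomologous to a constant under $T$, which is an algebraic condition on the entries of $\uA$. The presence of singular matrices, which collapse certain projective directions, could in principle produce non-trivial coboundary relations, so the parameter elimination argument used in Theorem~\ref{thm: ldt intro} must be refined to exclude this degeneracy off a Lebesgue-null subset of $\Mscr$. I expect this step, rather than the Nagaev–Guivarc'h machinery itself, to be the chief technical hurdle.
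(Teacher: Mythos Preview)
Your proposal has a genuine gap at the step where you ``invoke the spectral gap of $\Qop$ on a suitable Banach space of H\"older-continuous functions.'' The paper does not establish quasi-compactness on H\"older observables; what it proves (Theorem~\ref{Uniform Ergodicity}) is uniform ergodicity of $\Qop$ on $L^\infty(\Ascr\times\Pp^1)$. More importantly, the observable $\varphi_t(i,\hat v)=\log\norm{A_t(i)\,v/\norm{v}}$ is \emph{not} in any such space: for $i\in\Asing$ it has a logarithmic singularity at $\hat k_i$, so it is neither bounded nor H\"older. The paper explicitly flags this (see the remark after Theorem~\ref{Furstenberg's Formula}): ``the observable $\Psi$ \ldots does not belong to the space $L^\infty(\Ascr\times\Pp^1)$ \ldots which is what makes the establishing of statistical properties in the next section an extremely technical matter.'' Consequently the twisted operators $\Qop_z(\cdot)=\Qop(e^{z\varphi}\,\cdot)$ do not act boundedly on the space where the spectral gap lives, and the Nagaev--Guivarc'h machinery does not apply as stated.

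The paper's actual route avoids Nagaev--Guivarc'h entirely and uses the Gordin--Liv\v sic CLT (Theorem~\ref{thm Abstract CLT}), which only requires the much weaker hypothesis $\sum_n\norm{\bar\Qop_t^n\varphi_t-\int\varphi_t}_{L^2(p\times\eta_t)}<\infty$. Even this does \emph{not} follow from uniform ergodicity, again because $\varphi_t\notin L^\infty$. The key point you are missing is that the parameter elimination argument is needed not only for the nondegeneracy $\sigma>0$ but already for this $L^2$-summability: Lemma~\ref{last lemma} uses the explicit form of $\eta_t$, the $L^2$ bounds of Lemma~\ref{lem exp small measure}, and a Borel--Cantelli argument in $t$ to obtain the summability for Lebesgue-a.e.\ parameter. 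Only after that can one define $g_t=\sum_n\bar\Qop_t^n(\varphi_t-\int\varphi_t)$, set $\sigma^2(t)=\norm{g_t}_2^2-\norm{\bar\Qop_t g_t}_2^2$, and then carry out a separate argument (Lemmas~\ref{prev lemma}--\ref{lemma positive variance}) for $\sigma(t)>0$. Your identification of $\sigma>0$ as the chief hurdle is thus only half the story; the unboundedness of $\varphi$ is the obstruction throughout, and the spectral perturbation scheme you outline does not get off the ground in this setting.
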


We will actually prove something stronger. Given any random co\-cycle $\uA\in \Mscr$ with no null words, if $t \mapsto \uA_t$ is a one-parameter family of such cocycles passing through $\uA$ and satisfying a certain monoto\-ni\-ci\-ty property (positive winding), then for almost every parameter $t$, the cocycle $\uA_t$ satisfies LDT estimates and a CLT.

The main tools in deriving such limit laws for random linear cocycles and other types of dynamical systems are the spectral properties of either the Markov  transition operator  or the Ruelle transfer operator associated to the system. This spectral method goes back to the work of Nagaev~\cite{Nagaev} on CLTs for stationary Markov chains and it is described in an abstract setting in the book~\cite{Hennion-Herve} of Hennion and Herv\'e. The main ingredient required by the method is the quasi-compactness (which is equivalent to the spectral gap property) of the transition/transfer operator on an appropriate space of observables. 

Recently, Cai, Duarte, Klein~\cite{CDK-paper3} obtained an {\em effective} abstract LDT estimate for Markov chains under a much weaker hypothesis, a certain mixing property. Theorem~\ref{thm: ldt intro} will be established using this result, its effectiveness being  crucial in our argument. To establish Theorem~\ref{thm: clt intro} we will use an abstract CLT due to Gordin-Liv\v sic~\cite{Go78}. 

It turns out that these abstract results are not directly applicable to cocycles $\uA\in \Mscr$, precisely because of their singularities. We will employ some rather involved parameter elimination arguments that eventually lead to limit laws for almost every parameter. 

Moreover, we will also derive an explicit formula for the (unique) stationary measure of the transition operator (see Proposition~\ref{prop stationary measure}) and a Furstenberg-type formula for the first Lyapunov exponent (see Theorem~\ref{Furstenberg's Formula}). These results are crucial in the arguments used for establi\-shing the limit laws and also interesting in themselves. We note, moreover, that standard results and methods of Furstenberg's theory (such as the formula describing the first Lyapunov exponent via stationary measures) are {\em not} applicable in our setting of singular cocycles, and will be studied independently in this work. 


Together with the results in our first paper~\cite{DDGK-paper1} on this topic, as well as the previously available works on invertible matrix components, we are now able to present an almost complete picture regarding the crucial problems on Lyapunov exponents described above, in the setting of random cocycles driven by a finitely supported measure on $\Mat_2 (\R)$.

\smallskip

The table below thus summarizes what it is known regarding the minimal regularity of the Lyapunov exponent (R-LE), namely its modulus of continuity\footnote{Given a metric space $(M, d)$, a function $\phi\colon M \to \R$ is said to be weak-H\"older continuous if $\abs{\phi (x) - \phi (y)} \le C  \exp \left(- \alpha \,  \log^b \frac{1}{d (x, y)} \right)$ for some $C<\infty, \alpha, b \in (0, 1]$ and all $x, y \in M$. When $b=1$, this corresponds to $\alpha$-H\"older continuity. \\ Moreover, if $\abs{\phi (a) - \phi (x)} \le C \, d (a, x)^\alpha$ holds for a given point $a$ and all $x$, we call $\phi$ pointwise H\"older at $a$.} or whether it is discontinuous (Disc.) as well as the availability of large deviations type (LDT) estimates and of a central limit theorem (CLT) for $\Mat_2 (\R)$-valued  Bernoulli cocycles $\uA \in \Mat_2 (\R)^k$, $k\ge 2$. For the purpose of this table, we assume that $L_1 >  L_2 \geq -\infty$.  There are three possibilities for such a  given cocycle $\uA$: $\rank = 2$, meaning its components are all invertible; $\rank=1$, meaning its components are all singular; $\rank=1\&2$, the case treated in this paper, where some components are invertible and some singular.

\begin{table}[h]\label{table}
	\begin{tabular}{|c||c|c|c|}
		\hline
		& R-LE  & LDT  & CLT \\
		\hline\hline
		$\rank=2$ & (Weak) H\"older\tablefootnote{Locally H\"older for quasi-irreducible cocycles~\cite{LP89},~\cite{DK-book}; locally weak-H\"older in the remaining case~\cite{DK-Holder}; pointwise H\"older always~\cite{Tall-Viana}.} 
		& \quad Yes\tablefootnote{Locally uniform LDT of exponential type in the quasi-irreducible case~\cite{DK-book}; locally uniform LDT of sub-exponential type in the remaining case~\cite{DK-Holder}; non-uniform LDT of exponential type holds always~\cite{DK-Holder}.} \quad
		& \quad Yes\tablefootnote{See~\cite{LP82},~\cite{BQ-CLT} and~\cite{Pablo-Malicet}.}\quad
		\\
		\hline
		$\rank=1$ 
		& $C^\omega$  (\cite[Cor. 4.4]{DDGK-paper1})
		& Yes (Rmk.~\ref{ldt-clt rank=1})
		& Yes (Rmk.~\ref{ldt-clt rank=1})
		\\
		\hline
		$\rank=1\&2$
		& Disc. (\cite[Cor. 1.1]{DDGK-paper1})
		& Yes (Thm.~\ref{thm: ldt intro}) 
		& Yes (Thm.~\ref{thm: clt intro})
		\\
		\hline
	\end{tabular}
	\medskip
	\caption{Random (Bernoulli) two dimensional cocycles.}
	\vspace{-10pt}
\end{table}

The concept of projectively uniformly hyperbolic cocycle (or dominated splitting property) is defined and characterized for this setting in~\cite[Section 2]{DDGK-paper1}, extending results of Avila, Bochi, Yoccoz~\cite{ABY10} for $\SL_2 (\R)$-valued cocycles. Recall that in this  case  the Lyapunov exponent is analytic and the statistical properties hold trivially (they are easily reduced to their classical analogues for additive i.i.d. processes).

Moreover, in this paper we show that the statistical properties on the second and third lines of Table~\ref{table} also hold for mixing {\em Markov} cocycles. Furthermore, in this Markov setting, the results on the first line are available only in the generic (irreducible) case (see~\cite[Chapter 5]{DK-book}), but we expect them to still hold without the generic assumption, as is the case in the Bernoulli setting, see~\cite{DK-Holder}.

\smallskip

The rest of the paper is organized as follows. In Section~\ref{stationary} we show that given a random cocycle with both invertible and singular components, the induced projective Markov chain is a renewal process with a unique and explicitly defined stationary measure supported on a discrete set. Moreover, the transition stochastic kernel is uniformly ergodic in the sense that its iterated kernel converges to the stationary measure in the total variation norm. Furthermore, we establish a Furstenberg-type formula for the Lyapunov exponent. Using parameter elimination arguments, in Section~\ref{StatProp} we establish statistical properties (large deviations and a central limit theorem) for one-parameter families of such cocycles satisfying a positive winding condition. This in particular implies the availability of these limit laws for Lebesgue almost every random cocycle in finite symbols, i.e. Theorems~\ref{thm: ldt intro} and~\ref{thm: clt intro}.

\subsection*{Dedication.} The last named author would like to dedicate this paper  to the memory of Michael Goldstein, whose work has deeply influenced his own. My interest in  problems concerning the regularity of Lyapunov exponents, large deviations estimates, spectral theory of Schr\"odinger operators and the relations between these topics, as well as the subtleties involved in performing  delicate parameter elimination arguments, all stem from my exposure to Michael's profound work (e.g.~\cite{B-G-first, GS01, GS-fine}), which has permeated this and indeed most of my research alone or in collaboration with others, throughout the years. 

\medskip  

\section{Stationary measures}\label{stationary}

\smallskip

In this section we establish formulas for the stationary measure and the Lyapunov exponent of a random linear cocycle $F \colon X\times \R^2\to X\times \R^2$ with both singular and invertible components. 

\smallskip

Given a finite alphabet  $\Ascr = \{1, \ldots, k\}$ and a partition into two nonempty sets $\Ascr = \Asing \sqcup \Ainv$, let $\uA :=(A_i)_{i\in\mathscr{A}} \in  \Mat_2(\R)^k$ be a $k$-tuple such that $\rank A_i = 1$ if $i\in \Asing$ and $\rank A_i = 2$ if $i\in \Ainv$. Note that in this section we do not make any other assumptions on $\uA$. 

Let $P=\{p_{i j}\}_{1\le i, j \le k}$ be a primitive left stochastic matrix (that is, $p_{i j}$ encodes the transition probability from state $j$ to state $i$) and let $q=\{q_i\}_{1\le i\le k}$ be its unique stationary probability vector. 

Then the triplet $(\uA, P, q)$ determines the (Markov) random linear cocycle $F \colon X \times \R^2 \to X \times \R^2$, where $X=\Ascr^\Z$ is endowed with the Markov measure $\mu$ with initial distribution $q$ and transition probability given by $P$ and  $F (\om, v) = (\sigma \om, A(\sigma \om) v)=(\sigma \om, A_{\om_1} v)$. 

\smallskip

Let $\Pp^1:=\Pp(\R^2)$ denote the projective line. An element of $\Pp^1$ will be denoted by $\hat v$, where $v$ is a nonzero vector (or a one dimensional subspace) in $\R^2$.  Given an invertible matrix $A \in \Mat_2 (\R)$, its induced projection action $\hat A \colon \Pp^1 \to \Pp^1$ is given by $\hat A \hat v := \widehat{A v}$.  If $A$ has rank $1$ (that is, if it is nonzero and noninvertible), we define its projective action as the constant map $\hat A \hat v := \hat r$, where $r = \Range (A)$.   Note that we are formally removing the discontinuity of the projective action of a singular matrix at the kernel. 

For a random cocycle  $\uA = (A_i)_{i\in\Ascr}\in\Mat_2(\R)^\Ascr$ we will use the notations $r_i$ and $k_i$ to represent, respectively, the range and the kernel of $A_i$, as well as, when convenient, a unit vector belonging to these one-dimensional subspaces.

\begin{remark}
In the Bernoulli case, if we consider an i.i.d. sequence of $\Ascr$-valued random variables $\{\xi_n\}_n$,  then the sequence of return times to $\Asing$ forms a discrete time renewal process, see~\cite[Definition 2.1]{Barbu2008} for the meaning of this concept. Note that at these return times, when $\xi_n=i\in \Asing$,  the projective action induced by $\uA$ is reset 
to the range of $A_i$. This simple observation lies at the heart of all the explicit formulas presented below. Since in the Markov case the reduction to the setting of~\cite{Barbu2008} is less evident, the results of this section will be  presented for this case. The Bernoulli case can be treated similarly, but in a much more straightforward way; moreover, all relevant quantities have much simpler expressions.
\end{remark}

Let $L^\infty(\Ascr\times\Pp^1)$ be the Banach space of bounded and measurable functions $\varphi \colon \Ascr\times\Pp^1\to\R$ endowed with the sup norm denoted by $\norm{\cdot}_\infty$.

The cocycle $(\uA, P, q)$ determines the operator $\Qop \colon L^\infty(\Ascr\times\Pp^1)\to L^\infty(\Ascr\times\Pp^1)$
defined by
\begin{align*}
 (\Qop \varphi)(j,\hat v) &:= \sum_{ i\in\Ascr}  \varphi(i,\hat A_i\, \hat v)\, p_{ij} \\
& = \sum_{ i\in\Ainv}   \varphi(i, \hat A_i\, \hat v)\,p_{ij}  +  \sum_{ i\in\Asing} \varphi(i, \hat r_i)\,p_{ij} .
\end{align*}
Moreover, we write $\Qop = \Qinv + \Qsing$, where the operators $\Qinv$ and $\Qsing$ are given respectively by the two terms above. 

The operator $\Qop$ is clearly linear, positive and it takes the constant function $\ind$ to itself; in other words, it is a {\em Markov operator}. Let us recall the concept of stationary measure for such an operator.

\begin{definition}\label{def stat measure}
A measure $\eta \in \Prob (\Ascr \times \Pp^1)$ is $\Qop$-stationary if for all observables $\varphi \in  L^\infty(\Ascr\times\Pp^1)$,
$$\int \Qop \varphi \, d \eta = \int \varphi \, d \eta .$$
In this case we also call $\eta$ stationary relative to the cocycle $(\uA, P, q)$.
\end{definition}

By general principles, a stationary measure always exists. In our setting it will be shown to be unique. 

Let $\pi \colon \Ascr\times \Pp^1\to  \Ascr$ denote the canonical projection in the first coordinate. If $\eta \in \Prob (\Ascr \times \Pp^1)$ is $\Qop$-stationary then its push-forward measure via $\pi$ is the $P$-stationary measure $q$ on $\Ascr$, that is, $\pi_\ast\eta=q$.

Indeed, the left stochastic matrix $P$ induces the Markov operator $u \mapsto u \, P$ on $L^\infty (\Ascr) \simeq \R^k$. For a (column) vector $q'$, $P q' = q'$ if and only if $u P q' = u q'$ for every (row) vector $u \in \R^k$, so $q'$ is $P$-stationary if and only if it is stationary relative to its corresponding Markov operator. Moreover, given any $u \in L^\infty (\Ascr)$ and applying the fact that $\eta$ is $\Qop$-stationary to the observable $\varphi := u \circ \pi$ it follows immediately that 
$\displaystyle \int u P \, d (\pi_\ast\eta) = \int u \, d (\pi_\ast\eta) ,$
showing that $\pi_\ast\eta$ is $P$-stationary. But since $P$ is primitive, it has a unique stationary probability vector, so $\pi_\ast\eta = q$.

\smallskip  

Since $\eta$ projects down via $\pi$ to $q$, we can consider 
its disintegration $\{\eta_i\}_{i\in\Ascr} \subset \Prob (\Pp^1)$, which is characterized by 
$$\int_{\Ascr\times\Pp^1}  \varphi (i, \hat v) \, d \eta (i, \hat v) = \sum_{i\in\Ascr} \, q_i \, \int_{\Pp^1} \varphi (i, \hat v) \, d \eta_i (\hat v) \quad \forall \varphi \in L^\infty(\Ascr\times\Pp^1) \, .$$

Then
\begin{equation*}
\eta=\sum_{i\in\Ascr} q_i\, \delta_i\times\eta_i \quad \text{and} \quad \eta_i (E) = \frac{1}{q_i} \, \eta (\{i\} \times E) \,\, \forall i\in \Ascr, E\subset \Pp^1 \text{ Borel} .
\end{equation*}

Given $n\ge 1$, $s \in \Asing$ and $l \in\Ascr$, let $\Bscr_{n}(s,l)$ denote the set of words $\uom=(\omega_0, \ldots, \omega_{n})$
of length $n+1$ such that $\omega_0=s$, $\omega_{n}=l$  and 
$\omega_i\in \Ainv$ for all $i=1, \ldots, n-1$.
For such a word we write
$A^{n}(\uom):=A_{\omega_{n}}\, \ldots\, A_{\omega_2}\, A_{\omega_1}$  and also put
$p (\uom):=p_{\omega_n\,\omega_{n-1}} \, \cdots \, p_{\omega_1\,\omega_{0}}$. 
  
 \smallskip
  
With these notations, we have the following explicit formula for a (or, a-posteriori, {\em the}) $\Qop$-stationary measure. 

\begin{proposition}
\label{prop stationary measure}
Let $\eta=\displaystyle\sum_{j \in \Ascr}q_j \, \delta_j \times \eta_j$
where for all $j\in\Ascr$,

$$
\eta_j := \frac{1}{q_j} \sum_{s \in \Asing}q_s \sum_{n=1}^\infty
\sum_{\uomega \in \Bscr_{n}(s, j)}  p(\uomega)\delta_{\hat A^n(\uomega) \hat r_s}.
$$
Then $\eta$ is an $(\uA, P, q)$-stationary probability measure on $\Ascr\times\Pp^1$.
\end{proposition}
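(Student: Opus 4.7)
The plan is to verify directly that the measure $\eta = \sum_{j\in\Ascr} \delta_j \times \rho_j$, with $\rho_j := q_j\,\eta_j$, is a probability measure and satisfies $\int \Qop \varphi \, d\eta = \int \varphi \, d\eta$ for every $\varphi \in L^\infty(\Ascr\times\Pp^1)$. A preliminary observation that streamlines everything is that for $j \in \Asing$, each word $\uomega \in \Bscr_n(s,j)$ ends in the singular matrix $A_j$, so by the convention $\hat A^n(\uomega)\hat r_s = \hat r_j$, and hence $\rho_j = c_j\,\delta_{\hat r_j}$ for some $c_j \ge 0$. The coefficient $c_j$ equals $\sum_{s\in\Asing}q_s\,\Prob_s(\xi_\tau = j)$, where $\tau = \inf\{n\ge 1 : \xi_n\in\Asing\}$; this equals $q_j$ because the chain restricted to visits to $\Asing$ is a Markov chain whose stationary distribution is $q\!\mid_{\Asing}$.

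To check stationarity I would split $\Qop = \Qsing + \Qinv$. For $\Qsing$, the observable $\varphi(i,\hat A_i\hat v) = \varphi(i,\hat r_i)$ is constant in $\hat v$, so the $P$-stationarity of $q$ yields immediately the $i\in\Asing$ piece of $\int\varphi\,d\eta$, namely $\sum_{i\in\Asing} q_i\,\varphi(i,\hat r_i)$. For $\Qinv$, the central combinatorial identity is that appending $i \in \Ainv$ to a word $\uomega \in \Bscr_n(s,j)$ with $j\in\Ainv$ produces a word $\uomega' \in \Bscr_{n+1}(s,i)$ with $p(\uomega')=p_{ij}\,p(\uomega)$ and $\hat A^{n+1}(\uomega')\hat r_s = \hat A_i\,\hat A^n(\uomega)\hat r_s$. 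The complementary contribution, coming from $j \in \Asing$ via the Dirac form of $\rho_j$, is $p_{is}\,q_s\,\delta_{\hat A_i\hat r_s}$, which exactly accounts for the length-two words $(s,i)\in\Bscr_1(s,i)$. Assembling the two families reconstructs the full renewal sum indexing $\rho_i$ for $i\in\Ainv$, which matches the $j=i\in\Ainv$ piece of $\int\varphi\,d\eta$.

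For $\eta$ to be a probability measure one must show $f_j := \rho_j(\Pp^1)$ equals $q_j$ for every $j$. Peeling off the last letter of each $\uomega$ as above yields the linear system $f_j = \sum_{s\in\Asing} p_{js}\,q_s + \sum_{i\in\Ainv} p_{ji}\,f_i$, and the $P$-stationarity of $q$ gives the identical relation for $q$. Setting $h := q - f$, which is already known to vanish on $\Asing$ by the first paragraph, one obtains $h\!\mid_{\Ainv} = Q\,h\!\mid_{\Ainv}$ for the substochastic block $Q = (p_{ji})_{i,j\in\Ainv}$. The primitivity of $P$ combined with $\Asing \neq \emptyset$ forces the spectral radius of $Q$ to be strictly less than one, so $h \equiv 0$; this simultaneously guarantees absolute convergence of every series defining $\rho_j$.

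The main obstacle I expect is the careful bookkeeping in the $\Qinv$ step: one must verify that the $j\in\Asing$ and $j\in\Ainv$ contributions to $(\hat A_i)_\ast\bigl(\sum_j p_{ij}\rho_j\bigr)$ fit together to reproduce $\rho_i$ exactly, including the length-two base case. Once the renewal/word decomposition is tracked faithfully, every identity reduces either to the $P$-stationarity of $q$ or to the geometric convergence inherited from the substochastic block $Q$.
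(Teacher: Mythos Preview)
Your argument is correct and follows essentially the same blueprint as the paper: split $\Qop=\Qinv+\Qsing$, handle the $\Qsing$ piece via the Dirac form $\rho_j=q_j\delta_{\hat r_j}$ for $j\in\Asing$ together with $Pq=q$, and handle the $\Qinv$ piece by the word-appending bijection that shifts $\Bscr_n(s,j)$ (with $j\in\Ainv$) into $\Bscr_{n+1}(s,i)$, with the $j\in\Asing$ contribution supplying the $n=1$ base case. The paper organizes the latter into four cross terms $\Sinv(\Qinv\varphi),\Sinv(\Qsing\varphi),\Ssing(\Qinv\varphi),\Ssing(\Qsing\varphi)$, but the content is the same.

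The one genuine difference is in the mass identity $\rho_j(\Pp^1)=q_j$. The paper proves it probabilistically, by partitioning the cylinder $[0;j]$ according to the most recent singular symbol in the past and invoking mixing of the Markov shift. You instead derive the recursion $f_j=\sum_{s\in\Asing}p_{js}q_s+\sum_{i\in\Ainv}p_{ji}f_i$ and use the Perron--Frobenius fact that a proper principal submatrix of a primitive stochastic matrix has spectral radius strictly below $1$. Your route is more algebraic and has the side benefit of making absolute convergence of the renewal series explicit; the paper's is more directly dynamical. One small redundancy: your step-1 appeal to the induced chain on $\Asing$ is not actually needed, since the recursion $h_j=\sum_{i\in\Ainv}p_{ji}h_i$ holds for every $j\in\Ascr$ regardless of what $h$ does on $\Asing$, so the substochastic-block argument already yields $h\equiv 0$ on its own.
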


\begin{proof} We first show that for all $j\in\Ascr$,
\begin{equation}
\label{q formula}
	\sum_{s \in \Asing}q_s \sum_{n=1}^\infty
	\sum_{\uomega \in \Bscr_{n}(s,j)}  p(\uomega)=q_j .
\end{equation}
This will then imply that $\eta_j$ is a probability measure on $\Pp^1$ for all $j\in\Ascr$, hence $\eta$ is a probability measure on $\Ascr\times\Pp^1$.

Indeed, given $j\in\Ascr$, consider the cylinder $[0; j]$ determined by the word $(j)$, so $\mu ([0; j]) = q_j$. This cylinder is partitioned, modulo a zero  measure set (mod $0$), into cylinders $[-n; s \uom' j]$, determined by words $(s, \uom', j)$ where $s\in\Asing$ and $\uom' = (\om_1, \cdots, \om_{n-1})$ with $\om_i \in \Ainv$ for all $1\le i \le n-1$, $n\ge 0$ (when $n=1$,   the word $(s, \uom', j)$ is simply $(s, j)$).  That is,
$$[0; j] = \bigsqcup_{s\in\Asing} \, \bigsqcup_{n=1}^\infty \, \bigsqcup_{\om=(s, \uom', j) \in \Bscr_n (s, j)} \, [-n; s \uom' j] \quad (\text{mod } 0) . $$

The cylinders $[-n; s \uom' j]$ are mutually disjoint because $s\in\Asing$ and $\om_i\in\Ainv$ for all $i$. Modulo a zero measure set, their union is $[0; j]$ since, by the mixing of the Markov shift determined by $(P, q)$, almost every infinite word starting with the symbol $j$ contains in its past a symbol $s\in\Asing$. Moreover, 
$\mu ( [-n; s \uom' j] ) = q_s \, p (s, \om', j)$ which establishes the identity~\eqref{q formula}.

It remains to verify that $\eta=\sum_{j \in \Ascr}q_j\delta_j \times \eta_j$ is $\Qop$-stationary, that is, that 
$$\int \Qop \varphi \, d \eta = \int \varphi \, d \eta$$ for all $\varphi \in L^\infty (\Ascr \times \Pp^1)$.

\smallskip

Firstly note that if $j\in\Asing$ then $\eta_j = \delta_{\hat r_j}$. 
Indeed, for all $n\ge 1$, $s\in\Asing$, $\uom \in \Bscr_n (s, j)$ we have
$\hat A^n (\uom) \hat r_s = \hat A_j \cdots \hat A_{\om_1} \hat r_s = \hat r_j$, 
so using the identity~\eqref{q formula},
\begin{align*}
\eta_j &= \frac{1}{q_j} \sum_{s \in \Asing}q_s \sum_{n=1}^\infty
\sum_{\uomega \in \Bscr_{n}(s, j)}  p(\uomega)\delta_{\hat A^n(\uomega) \hat r_s} \\
& =
\frac{1}{q_j} \Big(  \sum_{s \in \Asing}q _s \sum_{n=1}^\infty
\sum_{\uomega \in \Bscr_{n}(s, j)}  p(\uomega) \Big) \, \delta_{\hat r_j} = \delta_{\hat r_j} .
\end{align*}

Secondly, for any observable $\varphi \in L^\infty (\Ascr \times \Pp^1)$, we write
\begin{align*}
\int \varphi \, d \eta = \sum_{j\in\Ascr} q_j \, \int \varphi (j, \hat v) \, d \eta_j (\hat v) = \Sinv (\varphi) + \Ssing (\varphi)
\end{align*}
where
\begin{align*}
\Sinv (\varphi) &:= \sum_{j\in\Ainv} q_j \, \int \varphi (j, \hat v) \, d \eta_j (\hat v) \\
&= \sum_{j\in\Ainv} \, \sum_{s\in\Asing} q_s \, \sum_{n=1}^\infty \sum_{\uom\in\Bscr_n (s, j)} p (\uom) \, \varphi (j, \hat A^n(\uomega) \hat r_s)
\end{align*}
and
\begin{align*}
\Ssing (\varphi) &:= \sum_{j\in\Asing} q_j \, \int \varphi (j, \hat v) \, d \eta_j (\hat v) = \sum_{j\in\Asing} q_j \, \varphi (j, \hat r_j) \, .
\end{align*}

Recall that $\Qop \varphi = \Qinv \varphi + \Qsing \varphi$, where
\begin{align*}
\Qinv \varphi (j, \hat v) &= \sum_{ i\in\Ainv}  p_{ij} \,   \varphi(i, \hat A_i\, \hat v)\\
\Qsing \varphi (j, \hat v) &= \sum_{ i\in\Asing} p_{ij} \, \varphi(i, \hat r_i) \, .
\end{align*}

Then
\begin{align*}
\int \Qop \varphi \, d \eta &= \Sinv (\Qop \varphi) + \Ssing (\Qop \varphi) \\
&= \Sinv (\Qinv \varphi) + \Sinv (\Qsing \varphi) + \Ssing (\Qinv \varphi) + \Ssing (\Qsing \varphi) \, .
\end{align*}

We evaluate each of the four terms above separately.
\begin{align*}
\Sinv (\Qinv \varphi) &=  \sum_{j\in\Ainv} \, \sum_{i\in\Ainv} \,  \sum_{s\in\Asing} q_s \, \sum_{n=1}^\infty \sum_{\uom\in\Bscr_n (s, j)} p (\uom) \, p_{i j} \, \varphi (i, \hat A_i \hat A^n(\uomega) \hat r_s)\\
&=  \sum_{i\in\Ainv} \, \sum_{s\in\Asing} q_s \, \sum_{n=1}^\infty \sum_{\uom'\in\Bscr_{n+1} (s, i)} p (\uom') \, \varphi (i, \hat A^n(\uomega') \hat r_s)\\
&=  \sum_{i\in\Ainv} \, \sum_{s\in\Asing} q_s \, \sum_{n=2}^\infty \sum_{\uom\in\Bscr_{n} (s, i)} p (\uom) \, \varphi (i, \hat A^n(\uomega) \hat r_s) \, .
\end{align*}
The first line turned into the second by adding the letter $i$ to the end of the word $\uom\in\Bscr_n (s, j)$ with $j\in\Ainv$, so that $\uom' := (\uom, i) \in \Bscr_{n+1} (s, i)$; note that $p (\uom') = p_{i j} \, p(\uom)$. 
Similarly,
\begin{align*}
\Sinv (\Qsing \varphi) &=  \sum_{j\in\Ainv} \, \sum_{i\in\Asing} \,  \sum_{s\in\Asing} q_s \, \sum_{n=1}^\infty \sum_{\uom\in\Bscr_n (s, j)} p (\uom) \, p_{i j} \, \varphi (i, \hat r_i)\\
&=  \sum_{i\in\Asing} \, \sum_{s\in\Asing} q_s \, \sum_{n=1}^\infty \sum_{\uom'\in\Bscr_{n+1} (s, i)} p (\uom') \, \varphi (i,  \hat r_i)\\
&=  \sum_{i\in\Asing} \, \sum_{s\in\Asing} q_s \, \sum_{n=2}^\infty \sum_{\uom\in\Bscr_{n} (s, i)} p (\uom) \, \varphi (i,  \hat r_i) \, .
\end{align*}

Then
\begin{align*}
\Ssing (\Qinv \varphi) &=  \sum_{j\in\Asing} q_j \sum_{i\in\Ainv} p_{i j} \, \varphi (i,  \hat A_i \hat r_j)\\
&=  \sum_{i\in\Ainv} \, \sum_{s\in\Asing} q_s \, \sum_{\uom\in\Bscr_{1} (s, i)} p (\uom) \, \varphi (i, \hat A^1 (\uom)  \hat r_s)
\end{align*}
where in passing from the second to the last line we changed the index $j$ for $s$ and used the fact that $(s, i)$ is the only word $\uom\in\Bscr_1 (s, i)$ and in this case $p(\uom)=p_{i s}$ and $A^1 (\uom)=A_i$.

Finally,
\begin{align*}
\Ssing (\Qsing \varphi) &=  \sum_{j\in\Asing} q_j \sum_{i\in\Asing} p_{i j} \, \varphi (i,  \hat r_i)\\
&=  \sum_{i\in\Asing} \, \sum_{s\in\Asing} q_s \, \sum_{\uom\in\Bscr_{1} (s, i)} p (\uom) \, \varphi (i,  \hat r_i) \, .
\end{align*}

Note that 
$\Sinv (\Qinv \varphi) + \Ssing (\Qinv \varphi) = \Sinv (\varphi)$
and using again~\eqref{q formula},
\begin{align*}
\Sinv (\Qsing \varphi) + \Ssing (\Qsing \varphi) &=  \sum_{i\in\Asing} \Big(\sum_{s\in\Asing} q_s  \sum_{n=1}^\infty \sum_{\uom\in\Bscr_{n} (s, i)} p (\uom)\Big)  \varphi (i,  \hat r_i) \\
&=  \sum_{i\in\Asing} q_i \,  \varphi (i,  \hat r_i) = \Ssing (\varphi) ,
\end{align*}
which concludes the proof.
\end{proof}

\begin{theorem} \label{Uniform Ergodicity} If the cocycle $(\uA, P, q)$ has both singular and invertible components, then the corresponding Markov operator $\Qop$ is uniformly ergodic, i.e. there exist constants $C<\infty$ and $a>0$ such that
$$  \norm{  \Qop^n\varphi - \int\varphi\, d\eta }_\infty \leq C\, e^{-a\,n}\,\norm{\varphi}_\infty $$
for all $n\in\N$ and  
$\varphi\in L^\infty(\Ascr\times\Pp^1)$, where $\eta$ is any $\Qop$-stationary measure.
\end{theorem}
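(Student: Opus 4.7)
\medskip
\noindent\textbf{Proof plan.} The strategy is a two-stage coupling argument, founded on the \emph{reset property} of singular matrices: if $\xi_k\in\Asing$ then $\hat V_k=\hat A_{\xi_k}\hat V_{k-1}=\hat r_{\xi_k}$, a point that depends only on $\xi_k$. Consequently, two realisations of the projective Markov chain which share a common symbol trajectory from a singular time onward have identical projective components thereafter.

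I would consider two copies of the projective Markov chain $(\xi_n,\hat V_n)$ and $(\xi_n',\hat V_n')$ with transition $\Qop$, the first started from $\delta_{(j_0,\hat v_0)}$ and the second from the stationary measure $\eta$, and construct a coupling in two stages. \emph{Stage 1:} by primitivity of $P$ there exist $n_0\ge 1$ and $\varepsilon>0$ with $(P^{n_0})_{ij}\ge\varepsilon$ for all $i,j$; the standard Doeblin coupling of the symbol marginals then produces a meeting time $\tau_c$ with $\mathbb{P}(\tau_c>n)\le C_1 e^{-a_1 n}$, and from $\tau_c$ onward the two chains follow a common symbol trajectory. \emph{Stage 2:} set $\tau_s:=\inf\{n\ge\tau_c:\xi_n\in\Asing\}$; since $\Asing\ne\emptyset$ and $(P^{n_0})_{sj}\ge\varepsilon$ for any $s\in\Asing$ and all $j$, the conditional tail of $\tau_s-\tau_c$ decays exponentially, so $\mathbb{P}(\tau_s>n)\le C_2 e^{-a_2 n}$. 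At time $\tau_s$ the reset property forces $\hat V_{\tau_s}=\hat r_{\xi_{\tau_s}}=\hat V_{\tau_s}'$, and combined with $\xi_{\tau_s}=\xi_{\tau_s}'$ the two chains coincide for all $n\ge\tau_s$.

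Stationarity of $\eta$ gives $\mathbb{E}[\varphi(\xi_n',\hat V_n')]=\int\varphi\,d\eta$, while by construction $\mathbb{E}[\varphi(\xi_n,\hat V_n)]=(\Qop^n\varphi)(j_0,\hat v_0)$; on the event $\{\tau_s\le n\}$ these integrands coincide, so the coupling inequality yields
\[
\Bigl|(\Qop^n\varphi)(j_0,\hat v_0)-\int\varphi\,d\eta\Bigr|\le 2\,\norm{\varphi}_\infty\,\mathbb{P}(\tau_s>n)\le C\,e^{-a n}\,\norm{\varphi}_\infty,
\]
uniformly in $(j_0,\hat v_0)$, which is the required estimate.

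The apparent difficulty --- bounding a contraction of projective dynamics in sup norm, which is typically delicate --- is bypassed entirely, because the presence of a singular component annihilates all dependence on the prior projective state at each visit to $\Asing$. The only place requiring mild care is realising the Stage 1 Doeblin coupling jointly with the projective components so that each chain retains the prescribed marginal on $\Ascr\times\Pp^1$; this is routine (first sample the coupled symbol paths, then propagate the projective coordinates from the respective initial values along the resulting symbols). A by-product of the estimate is uniqueness of the stationary measure, confirming a posteriori that the explicit $\eta$ of Proposition~\ref{prop stationary measure} is \emph{the} $\Qop$-stationary probability.
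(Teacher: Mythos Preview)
Your argument is correct and takes a genuinely different route from the paper's. The paper proceeds operator-theoretically: it decomposes $\Qop=\Qinv+\Qsing$, shows by induction that $\Qop^n\varphi=\Qinv^n\varphi+T_n\varphi$ with $T_n\varphi$ lying in the finite-dimensional subspace $L^\infty(\Ascr)$ of functions independent of the projective variable, proves that $\Qinv^N$ is a strict contraction on $L^\infty$ for some $N$ (via primitivity of $P$ and the fact that $\Ainv\subsetneq\Ascr$), and then combines the exponential decay of $\Qinv^n\varphi$ with the classical convergence $uP^n\to u\cdot q$ on $L^\infty(\Ascr)$ to force $T_n\varphi\to\int\varphi\,d\eta$ exponentially fast. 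Your coupling argument is the probabilistic counterpart: the reset property you isolate is precisely what makes $\Qsing$ project onto functions of the symbol alone, and your two stages (Doeblin coupling of the symbol marginals, then wait for a singular hit) correspond respectively to the $P^n\to q$ step and to the contraction of $\Qinv^n$. Your approach is essentially the Doeblin/regeneration method for a chain with an accessible atom and would port cleanly to other settings with a reset mechanism; the paper's decomposition, by contrast, meshes directly with the explicit stationary-measure formula and is reused in the Bernoulli analysis (see the identity~\eqref{Qop Iterates}). One small point of care: the reset identity $\hat V_k=\hat r_{\xi_k}$ holds only for $k\ge1$, so you should take $\tau_s:=\inf\{n\ge\max(\tau_c,1):\xi_n\in\Asing\}$ to cover the edge case $\tau_c=0$, $j_0\in\Asing$, $\hat v_0\ne\hat r_{j_0}$; this costs nothing in the tail bound.
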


\begin{proof} Note that $L^\infty (\Ascr) \simeq \R^k$, seen as the subspace of observables $\varphi$ that do not depend on the projective variable, is invariant under the Markov operator $\Qop$; when restricted to this subspace, $\Qop$ becomes the operator $u \mapsto u \, P$ induced by the left stochastic matrix $P$. Recall also that $\eta$ projects down to $q$, the stationary probability vector of $P$. Since $P$ is primitive, for some constants $C<\infty$ and $\sigma \in (0, 1)$ (these constants may change a few times throughout the argument) 
\begin{equation}\label{eq 999}
\norm{u \, P^n - u \cdot q}_\infty \le C \sigma^n \, \norm{u}_\infty
\end{equation}
for all $n\in\N$ and $u \in \R^k \simeq L^\infty (\Ascr)$.

We show that for all $n\in\N$ there is a linear operator $T_n \colon L^\infty(\Ascr\times\Pp^1) \to L^\infty(\Ascr)$ such that for all $\varphi\in L^\infty(\Ascr\times\Pp^1)$,
\begin{equation}\label{eq1000}
\Qop^n \varphi = \Qinv^n \varphi + T_n \varphi .
\end{equation}
In other words, $\Qop^n \varphi - \Qinv^n \varphi$ does not depend on the projective variable. 

Indeed, when $n=1$, $\Qop \varphi = \Qinv \varphi + \Qsing \varphi$, where $\Qsing \varphi (j, \hat v) = \sum_{i\in\Asing} \varphi (i, \hat r_i) \, p_{i j} =: T_1 \varphi (j)$ does not depend on $\hat v$. By induction, if we assume that for all observables $\varphi$ we can write $\Qop^n \varphi = \Qinv^n \varphi + T_n \varphi$ with $T_n \varphi$ not depending on the projective variable, then
\begin{align*}
\Qop^{n+1} \varphi &= \Qop (\Qop^n \varphi) = \Qop (\Qinv^n \varphi + T_n \varphi) 
=  \Qop (\Qinv^n \varphi) + \Qop (T_n \varphi) \\
&= \Qinv (\Qinv^n \varphi) + \Qsing (\Qinv^n \varphi) +  (T_n \varphi) \, P
= \Qinv^{n+1} \varphi  + T_{n+1} \varphi ,
\end{align*}
where 
\begin{equation}\label{eq1002}
T_{n+1} \varphi :=  \Qsing (\Qinv^n \varphi) +  (T_n \varphi) \, P
\end{equation}
does not depend on the projective variable, thus establishing~\eqref{eq1000}.

\smallskip

We show that $\Qinv^n$ eventually contracts, so $\Qinv^n \varphi$ converges exponentially fast to $0$, while  $T_n \varphi$ converges exponentially fast to a constant (in the sup norm), which immediately imply the conclusion of the theorem. 

\smallskip

Since the stochastic matrix $P$ is primitive, there is $N\ge1$ such that $P^N_{i j} > 0$ for all $i, j \in \Ascr$. In particular, since $P^N$ is also stochastic,
$$\sigma_0 := \max_{j\in\Ascr} \, \sum_{i\in\Ainv} \, P^N_{i j} < 1 .$$

We show that $\Qinv^N$ is a contraction on $L^\infty(\Ascr\times\Pp^1)$. Indeed, given any observable $\varphi$ and $(j, \hat v) \in \Ascr \times \Pp^1$, clearly
$$\Qinv^N \varphi (j, \hat v) = \sum_{i\in\Ainv} \, \sum_{\uom\in\Bscr_N (j, i)} \varphi (i, A^N (\uom) \hat v) \, p (\uom) \, .$$

Then
\begin{align*}
\norm{\Qinv^N \varphi}_\infty &\le \norm{\varphi}_\infty \, \max_{j\in\Ascr} \, \sum_{i\in\Ainv} \, \sum_{\uom\in\Bscr_N (j, i)} p (\uom) \\
&= \norm{\varphi}_\infty \, \max_{j\in\Ascr} \, \sum_{i\in\Ainv} \, \sum_{\uom'\in\Ainv^{N-1}} p ( j \uom' i)\\
&\le \norm{\varphi}_\infty \, \max_{j\in\Ascr} \, \sum_{i\in\Ainv} \, \sum_{\uom'\in\Ascr^{N-1}} p ( j \uom' i)\\
&=  \norm{\varphi}_\infty \, \max_{j\in\Ascr} \, \sum_{i\in\Ainv} \, P^N_{i j} = \sigma_0 \,  \norm{\varphi}_\infty \, .
\end{align*}

Given any integer $n$, we write $n = m N + r$ with $0\le r < N$ and use the fact that the operator norm of $\Qinv$ is equal to $1$ to  conclude that there are $C<\infty$, $\sigma \in (0, 1)$ so that for all $n\in\N$ and $\varphi\in L^\infty(\Ascr\times\Pp^1)$,
\begin{equation}\label{eq2001}
\norm{\Qinv^n \varphi}_\infty \le C \sigma^n \, \norm{\varphi}_\infty .
\end{equation}

From~\eqref{eq1002} and~\eqref{eq2001} we get that for all $n\in\N$,
$$\norm{ T_{n+1} \varphi - (T_n \varphi) \, P}_\infty = \norm{ \Qsing (\Qinv^n \varphi) }_\infty \le 
\norm{\Qinv^n \varphi}_\infty \le C \sigma^n \, \norm{\varphi}_\infty ,$$
\begin{align*}
\norm{T_{n+2} \varphi  - (T_n \varphi) \, P^2}_\infty &\le \norm{ T_{n+2} \varphi - (T_{n+1} \varphi) \, P}_\infty +
\norm{ (T_{n+1} \varphi  - (T_{n} \varphi) P) \, P}_\infty \\
&\le C (\sigma^n + \sigma^{n+1})\, \norm{\varphi}_\infty ,
\end{align*}
and proceeding inductively, for all $n, k \in \N$, 
$$\norm{ T_{n+k} \varphi - (T_n \varphi) \, P^k}_\infty \le C' \sigma^n \, \norm{\varphi}_\infty .$$

In particular,
\begin{equation}\label{eq1005}
\norm{ T_{2 n} \varphi - (T_n \varphi) \, P^n}_\infty \le C' \sigma^n \, \norm{\varphi}_\infty .
\end{equation}

By~\eqref{eq 999},
\begin{equation}\label{eq1006}
\norm{(T_n \varphi) \, P^n - (T_n \varphi) \cdot q}_\infty \le C \sigma^n \, \norm{(T_n \varphi)}_\infty \le 2 C \sigma^n \, \norm{\varphi}_\infty \, .
\end{equation}

We have $\Qop^n \varphi = \Qinv^n \varphi  + T_n \varphi $. Since the measure $\eta$ is $\Qop$-stationary and it projects down to $q$, it follows that
\begin{align}
\abs{ \int \varphi d \eta -  (T_n \varphi) \cdot q} &= \abs{ \int \Qop^n \varphi \, d \eta -   \int T_n \varphi \, d\eta } \notag\\
& = \abs{ \int \Qinv^n \varphi \, d \eta} \le \norm{\Qinv^n \varphi}_\infty \le C \sigma^n \, \norm{\varphi}_\infty . \label{eq1007}
\end{align}

Combining~\eqref{eq1005},~\eqref{eq1006} and~\eqref{eq1007}, we conclude that
\begin{equation*}\label{eq1010}
\norm{ T_{2 n} \varphi -  \int \varphi d \eta}_\infty \le C'' \sigma^n \, \norm{\varphi}_\infty ,
\end{equation*}
which, together with~\eqref{eq2001} establishes our result (after choosing the constants $C, a$ appropriately).
\end{proof}

\begin{corollary}
If the cocycle $(\uA, P, q)$ has both singular and invertible components then it admits a unique stationary measure.
\end{corollary}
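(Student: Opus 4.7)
The plan is to derive uniqueness directly from the Uniform Ergodicity theorem proved just above. The key observation is that the estimate
\[
\norm{\Qop^n\varphi - \textstyle\int\varphi\,d\eta}_\infty \le C\,e^{-a\,n}\,\norm{\varphi}_\infty
\]
holds for \emph{any} $\Qop$-stationary probability measure $\eta$. Since the right-hand side tends to $0$, the sequence $\Qop^n\varphi$ converges in the sup norm to the constant function whose value is $\int\varphi\,d\eta$; crucially, the sequence $\Qop^n\varphi$ depends only on $\varphi$ and the cocycle $(\uA, P, q)$, not on the particular stationary measure being used.

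Concretely, I would proceed as follows. Suppose $\eta_1$ and $\eta_2$ are two $\Qop$-stationary probability measures on $\Ascr\times\Pp^1$ (existence being guaranteed by Proposition~\ref{prop stationary measure}). Fix $\varphi\in L^\infty(\Ascr\times\Pp^1)$. Applying Theorem~\ref{Uniform Ergodicity} to each $\eta_i$, we obtain, for every $(j,\hat v)\in\Ascr\times\Pp^1$ and every $n\in\N$,
\[
\babs{ (\Qop^n\varphi)(j,\hat v) - \textstyle\int\varphi\,d\eta_i } \le C\,e^{-a\,n}\,\norm{\varphi}_\infty, \qquad i=1,2.
\]
Passing to the limit $n\to\infty$ yields $\int\varphi\,d\eta_1 = \int\varphi\,d\eta_2$. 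Since $\varphi\in L^\infty(\Ascr\times\Pp^1)$ was arbitrary, the two measures agree against every bounded measurable observable, and hence $\eta_1=\eta_2$.

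There is essentially no obstacle here: all the substantive work has been carried out in Theorem~\ref{Uniform Ergodicity}, where the contraction of $\Qinv^n$ and the exponential stabilization of the auxiliary operators $T_n$ were established. The uniqueness assertion is simply the standard consequence of uniform ergodicity: a Markov operator for which $\Qop^n\varphi$ converges (in sup norm) to a constant depending only on $\varphi$ can admit at most one invariant probability measure. Combined with the explicit construction in Proposition~\ref{prop stationary measure}, this also identifies the unique stationary measure with the formula given there.
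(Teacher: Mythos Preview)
Your proof is correct and follows essentially the same approach as the paper's own proof: apply Theorem~\ref{Uniform Ergodicity} to two hypothetical stationary measures $\eta_1,\eta_2$, deduce that $\Qop^n\varphi$ converges uniformly to both $\int\varphi\,d\eta_1$ and $\int\varphi\,d\eta_2$, and conclude these integrals coincide for every bounded observable.
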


\begin{proof}
If $\eta_1$ and $\eta_2$ are $\Qop$-stationary, then Theorem~\ref{Uniform Ergodicity} above applies to each of them, so 
for any $\varphi\in L^\infty(\Ascr\times\Pp^1)$, $\Qop^n \varphi$ converges uniformly to $\int \varphi \, d \eta_1$ and to $\int \varphi \, d \eta_2$. Thus $\int \varphi \, d \eta_1 = \int \varphi \, d \eta_2$ for all observables $\varphi$, showing that $\eta_1 = \eta_2$.
\end{proof}

\begin{lemma} \label{prodNorm}
Given rank one matrices $B_1, B_2, \dots , B_n$ and a unit vector $r_0$,  we have that
$$
\norm{B_{n} \cdots B_{1}\, r_0} = \prod_{l=1}^n\norm{B_{l}\, r_{l-1}} \, ,
$$
where $ r_{l-1}$  is a unit vector in the range of $B_{l-1}$.
\end{lemma}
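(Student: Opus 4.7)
The plan is to argue by a one-line induction on $n$, exploiting the fact that because each $B_l$ has rank one, its image on any vector is a scalar multiple of a fixed unit vector in its range, and the absolute value of that scalar is precisely the norm.

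More concretely, since $B_1$ has rank one, its range is the one-dimensional subspace spanned by $r_1$, so $B_1 r_0 = c_1\, r_1$ for some scalar $c_1 \in \R$, and taking norms gives $\abs{c_1} = \norm{B_1 r_0}$. Substituting this back,
\begin{equation*}
 B_n \cdots B_2 B_1 r_0 \;=\; c_1 \; B_n \cdots B_2 r_1 ,
\end{equation*}
so that
\begin{equation*}
 \norm{B_n \cdots B_1 r_0} \;=\; \norm{B_1 r_0} \cdot \norm{B_n \cdots B_2 r_1} .
\end{equation*}
Now $r_1$ is itself a unit vector (by construction) and $B_2, \dots, B_n$ are rank one matrices, so the right-hand factor is exactly the quantity to which the lemma applies with $n-1$ matrices and initial unit vector $r_1$. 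I would then close the argument by induction on $n$, the base case $n=1$ being the identity $\norm{B_1 r_0} = \norm{B_1 r_0}$.

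There is no real obstacle: the only point to watch is the (harmless) sign ambiguity in choosing the unit vector $r_l$ in the range of $B_l$. Since the statement only involves norms, this sign is absorbed into the absolute value $\abs{c_l} = \norm{B_l r_{l-1}}$ at each step, and the final product $\prod_{l=1}^n \norm{B_l r_{l-1}}$ is unambiguous.
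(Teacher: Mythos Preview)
Your proof is correct and follows essentially the same approach as the paper's: write $B_1 r_0 = c_1 r_1$ with $\abs{c_1} = \norm{B_1 r_0}$, factor this scalar out of the full product, and induct. The paper phrases the inductive step slightly differently (it first checks the $n=2$ case explicitly and then writes the general step as $\norm{B_{n+1} B_n (B_{n-1}\cdots B_1 r_0)} = \norm{B_{n+1} r_n}\,\norm{B_n (B_{n-1}\cdots B_1 r_0)}$), but the content is the same.
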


\begin{proof}
We write $B_1 r_0 = \la_1 r_1$, thus $\sabs{\la_1} = \norm{B_1 r_0}$. It follows that 
$B_2 B_1 r_0 = \la_1 B_2 r_1$ so 
$$\norm{B_2 B_1 r_0} = \sabs{\la_1} \norm{B_2 r_1} = \norm{B_2 r_1} \norm{B_1 r_0} \, .$$
From here,
$$
\norm{B_{n+1} B_n \big(B_{n-1} \cdots B_{1}\, r_0 \big) } = \norm{ B_{n+1} r_n} \norm{ B_n B_{n-1} \cdots B_{1}\, r_0 }$$ 
and the conclusion follows by induction.
\end{proof}

Next we derive a closed-form expression for the Lyapunov exponent which, together with the explicit formula for the stationary measure in Proposition~\ref{prop stationary measure} will eventually lead to a Furstenberg-type formula. 

\begin{lemma} \label{propL1}
If the random cocycle $(\uA, P, q)$ has both singular and invertible components then
$$
L_1(\uA) = \sum _{i \in \Asing} \sum _{j \in \Asing} \sum _{n=1}^\infty \sum _{\uom\in \Bscr_{n}(i, j)} q_i \, p(\uom) \, \log \norm{A^n(\uom) r_i} \, .
$$
\end{lemma}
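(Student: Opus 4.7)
The strategy is to represent the right-hand side as $\int_X \phi\, d\mu$ for a well-chosen observable $\phi\colon X \to [-\infty,\infty)$, and then to identify $\int \phi\, d\mu$ with $L_1(\uA)$ via Birkhoff's ergodic theorem applied to the renewal structure induced by visits to $\Asing$. Concretely, set
$$\phi(\omega) := \ind_{\Asing}(\omega_0)\, \log \norm{A^{n(\omega)}(\omega)\, r_{\omega_0}},$$
where $n(\omega):=\min\{k\ge 1 : \omega_k\in \Asing\}$ is the first forward visit to $\Asing$, which is $\mu$-a.s.\ finite because $P$ is primitive and $\Asing\neq\emptyset$. Partitioning $X$ according to $\omega_0=i\in\Asing$, the value $n(\omega)=n$, the invertible letters $\omega_1,\dots,\omega_{n-1}\in\Ainv$, and $\omega_n=j\in\Asing$, and using that the Markov mass of such a cylinder is exactly $q_i\, p(\uom)$, one sees at once that $\int \phi\, d\mu$ equals the right-hand side of the lemma.

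The geometric heart of the argument is that between two consecutive singular times the cocycle collapses to a rank one matrix with known range. Let $T_1<T_2<\cdots$ enumerate the singular times of $\omega$ in $\{1,2,\ldots\}$ (an infinite sequence $\mu$-a.s.), set $T_0:=0$, and for $l\ge 1$ put
$$C_l := A_{\omega_{T_l}}\, A_{\omega_{T_l-1}}\cdots A_{\omega_{T_{l-1}+1}}.$$
Each $C_l$ is rank one with range spanned by $r_{\omega_{T_l}}$, so writing $C_l = r_{\omega_{T_l}}\,\xi_l^T$ and iterating (this is essentially Lemma~\ref{prodNorm}) gives
$$ \norm{A^N(\omega)} \;=\; \norm{D\, r_{\omega_{T_M}}}\,\cdot\,\norm{C_1}\,\cdot\,\prod_{l=2}^M \norm{C_l\, r_{\omega_{T_{l-1}}}}, $$
where $M$ is the number of singular times in $[1,N]$ and $D := A_{\omega_N}\cdots A_{\omega_{T_M+1}}$ is the invertible tail block (identity if $N=T_M$). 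The crucial point is that the $l$-th factor in the product is exactly $\exp(\phi(\sigma^{T_{l-1}}\omega))$, so taking logarithms matches the main contribution of a Birkhoff sum for $\phi$.

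More precisely, comparing $\log\norm{A^N(\omega)}$ with $\sum_{k=1}^N \phi(\sigma^k\omega)$, the two differ only by three boundary terms: the initial block contribution (involving $T_1$), the unfinished invertible tail of length $N-T_M$, and the unseen excursion past $T_M$ of length $T_{M+1}-T_M$. Each is bounded in absolute value by a constant times its block length, and since $P$ is primitive the gap times $T_{l+1}-T_l$ have exponentially decaying tails, so by Borel--Cantelli all three block lengths are $O(\log N)$ $\mu$-a.s. Meanwhile, $\phi^+\le n(\omega)\cdot \max_i \log^+\norm{A_i}$ is $\mu$-integrable (the first-return time $n$ has geometric tail), so Birkhoff's ergodic theorem applies in its $\int \phi^+<\infty$ form and yields
$$\frac{1}{N}\log\norm{A^N(\omega)} \;=\; \frac{1}{N}\sum_{k=1}^N \phi(\sigma^k\omega) + O\!\left(\tfrac{\log N}{N}\right) \;\longrightarrow\; \int \phi\, d\mu \;=\; L_1(\uA),$$
proving the claimed formula.

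The main technical subtlety is that $\phi$ may take the value $-\infty$ on a set of positive measure, because an invertible matrix inside an excursion could rotate $r_i$ exactly onto the kernel of the terminal singular matrix. I would dispatch this by using Birkhoff in the form valid under only $\int \phi^+<\infty$, so that the identity holds as an equality in $[-\infty,+\infty)$ that simultaneously captures the case $L_1(\uA)>-\infty$ and the degenerate case $L_1(\uA)=-\infty$. Secondary bookkeeping issues---ensuring $N\geq T_1$ eventually, and that the first-return time to $\Asing$ under $\mu$ indeed has exponential tails (for both the Borel--Cantelli step and the integrability of $\phi^+$)---follow directly from the primitivity of $P$ via the positive lower bound $\min_{j}\sum_{i\in\Asing}p_{ij}^{(N_0)}>0$ for some $N_0$.
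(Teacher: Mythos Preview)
Your direct route---comparing $\log\norm{A^N(\omega)}$ with a Birkhoff sum of $\phi$ on the full shift---differs from the paper's, which instead passes to the \emph{induced} cocycle $F_C$ over the first-return map to $C=\{\omega:\omega_0\in\Asing\}$, invokes the Abramov-type relation $L_1(F_C)=\mu(C)^{-1}L_1(\uA)$, and reads off $L_1(F_C)$ from the one-dimensional invariant sub-bundle $\omega\mapsto\mathrm{Range}(A_{\omega_0})$ via Birkhoff. That packaging eliminates boundary-term analysis altogether.

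Your argument has a real gap in the boundary-term step. Two of the three terms, $\log\norm{D\,r_{\omega_{T_M}}}$ and $\log\norm{C_1}$, are indeed bounded in modulus by a constant times their block lengths, since $D$ is invertible and $C_1$ is a singular matrix times an invertible product (so its operator norm is two-sidedly controlled). But the third term, $-\log\norm{C_{M+1}\,r_{\omega_{T_M}}}$, is \emph{not}: the block $C_{M+1}$ ends with the singular matrix $A_{\omega_{T_{M+1}}}$, and nothing prevents the preceding invertible product from sending $r_{\omega_{T_M}}$ arbitrarily close to---or exactly onto---its kernel. Hence $\norm{C_{M+1}\,r_{\omega_{T_M}}}$ admits no lower bound of the form $e^{-c(T_{M+1}-T_M)}$, and your claimed $O(\log N)$ control fails on that side. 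The simplest repair is to compare $\log\norm{A^N(\omega)}$ with the \emph{truncated} Birkhoff sum $\sum_{k=1}^{T_M-1}\phi(\sigma^k\omega)$ rather than $\sum_{k=1}^{N}$: this drops the offending term $\phi(\sigma^{T_M}\omega)$ entirely, leaves only the two well-controlled boundary pieces, and since $T_M/N\to 1$ a.s.\ (by your own Borel--Cantelli argument) the truncated average still converges to $\int\phi\,d\mu$. With this change your proof goes through, including the degenerate case $L_1(\uA)=-\infty$.
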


\begin{proof}
Consider the cylinders $C_i := [0; i]$ with $i \in \Asing$ and their union $C:= \bigcup_{i \in \Asing} C_i$, the set of all (bi-infinite) words with a singular symbol at the zeroth position. Then $q_0 := \mu(C) = \sum_{i\in \Asing} q_i$.

Moreover, let $\displaystyle C_{i, j}:= \bigcup_{n=1}^\infty \, \bigcup_{\uom\in \Bscr_{n}(i, j)} [0; \uom]$. 

By the mixing of the Markov shift, these sets give rise to the  following partitions modulo a zero measure set (mod $0$)
\begin{align*}
C_i &= \bigcup_{j \in \Asing}C_{i, j} =\bigcup_{j \in \Asing}\bigcup_{n=1}^\infty \, \bigcup_{\uom\in \Bscr_{n}(i, j)} [0; \uom] \;  \text{ and } \\
C & = \bigcup_{j \in \Asing}\bigcup_{i \in \Asing}C_{i, j} = \bigcup_{i \in \Asing}\bigcup_{j \in \Asing} \, \bigcup_{n=1}^\infty \, \bigcup_{\uom\in \Bscr_{n}(i, j)} [0; \uom] \, .
\end{align*}

Let $g \colon C \to C$ be the first return map to the cylinder $C$, given by $g(\omega)=\sigma^{\tau(\omega)}(\omega)$, where $\tau(\omega)=\min\{k \ge 1 \colon \sigma^k (\om) \in C \}$. The map $g$ preserves the induced measure $\bar \mu_C=\mu(C)^{-1}\, \mu|_C = \frac{1}{q_0} \, \mu|_C $.

We define the induced cocycle $F_C \colon C\times \R^2 \to C \times \R^2$, given by $F_C(\omega,v):=(g(\omega), \Ccal (\om) \,v)$, where $\Ccal (\om) := A^{\tau(\omega)}(\omega)$. 

By \cite[Proposition 4.18 and Exercise 4.8]{Viana2014} (it is easy to verify that these statements also hold for $\Mat_2 (\R)$-valued fiber maps) its Lyapunov exponent is related to that of the original cocycle $(\uA, P, q)$ via the expression $L_1(F_C)=\frac{1}{q_0} L_1(\uA)$; thus it is enough to compute $L_1(F_C)$. 

The induced cocycle $F_C$ leaves invariant the $1$-dimensional sub-bundle $X \ni \om \mapsto R(\omega):= \mathrm{Range}( A_{\omega_0})$.
Then using Oseledets' theorem, its first Lyapunov exponent is the rate of exponential growth of the fiber iterates of $F_C$ along this sub-bundle. Thus for $\mu$-a.e. $\om \in C$ and for a unit vector $r_0 \in R (\om) = \mathrm{Range}( A_{\omega_0})$,
\begin{equation}\label{eq 100}
L_1(F_C) = \lim_{n \to \infty} \frac{1}{n}\log \norm{\mathcal{C}^n(\omega) \, r_0}
= \lim_{n \to \infty} \frac{1}{n}\log \ \norm{  \prod_{l=1}^{n} \Ccal (g^l\omega) \, r_0 } \, .
\end{equation}

Given $\om \in C$, let $0 = k_1 < k_2 < \cdots $ denote all future entries to the singular part of the alphabet, that is, $k \in \N$ is such that $\om_k \in \Asing$ if and only if $k = k_l$ for some $l\in\N$. Then clearly $g^l (\om) = \sigma^{k_l} (\om)$ (whose zeroth entry is $\om_{k_l}$) and $\tau (g^l \om) = k_{l+1} - k_l$ for all $l\in\N$. Moreover, 
$$B_l := \Ccal (g^l\om) = A^{\tau (g^l \om)} (g^l \om) = A^{k_{l+1} - k_l} (\sigma^{k_l} \om)
= A_{\om_{k_{l+1}}} \cdots  A_{\om_{k_{l}+1}} \, ,$$
which is a rank one matrix whose range $r_l :=  \mathrm{Range} (B_l) = r_{\om_{k_{l+1}}} $.

By Lemma~\ref{prodNorm},
\begin{align}
\norm{\Ccal^n(\omega) \, r_0} &=  \norm{\prod_{l=n}^{1} \Ccal (g^l\omega) \, r_0} 
 =  \norm{\prod_{l=n}^{1} B_l \, r_0} =  \prod_{l=1}^n\norm{B_{l}\, r_{l-1}} \notag\\
& =  \prod_{l=1}^n \norm{ \Ccal (g^l \om) \, r_{\om_{k_{l}}} } \, . \label{eq 99}
\end{align}

Consider the observable $\varphi \colon C \to \R$, 
$$\varphi (\om) := \log \norm{\Ccal (\om) \, r_{\om_0} } = \log \norm{A^{\tau(\om)} (\om) \, r_{\om_0} } .$$
By~\eqref{eq 100},~\eqref{eq 99} and Birkhoff's ergodic theorem,
\begin{align*}
L_1(F_C) &=  \lim_{n \to \infty} \frac{1}{n} \sum_{l=1}^{n} \log \norm{ \Ccal (g^l \om) \, r_{\om_{k_{l}}} } 
=  \lim_{n \to \infty} \frac{1}{n} \sum_{l=1}^{n} \varphi (g^l \om) \\
&= \int_C  \varphi (\om) \, d \bar{\mu}|_C (\om)  = \frac{1}{q_0} \int_C \log \norm{\Ccal (\om) \, r_{\om_0} } \, d \mu (\om) \, .
\end{align*}

Note that on each given cylinder $[0; \uom]$ with $\uom\in \Bscr_{n}(i, j)$ for some $i, j \in \Asing$ and $n\ge 1$, the first return map $\tau$ is constant and equal to $n$, while the observable $\varphi$ is equal to
$\log \norm{A^n (\om)  \, r_{\om_0}} = \log \norm{A^n (\uom) \, r_i}$, thus it is constant. 
Moreover, the Markov measure of this cylinder is $\mu [0; \uom] = q_i \, p (\uom)$. Since these cylinders partition (mod $0$) the set $C$, we conclude that
$$\int_C \varphi (\om) \, d \mu (\om) = \sum_{i, j \in\Asing} \, \sum_{n=1}^\infty \, \sum_{\uom \in \Bscr_n (i, j)} q_i \, p (\uom)
\,   \log \norm{A^n (\uom) \, r_i} \, ,$$
which completes the proof of the lemma.
\end{proof}

\smallskip

Consider the following observable $\Psi \colon \Ascr\times \Pp^1 \to [- \infty, \infty)$,
$$
\Psi (j, \hat v) = \sum_{i \in \Ascr} p_{i j}\, \log \frac{\norm{A_i\, v}}{\norm{v}} .
$$

We are ready to phrase and prove a Furstenberg-type formula relating the first Lyapunov exponent of a random cocycle $(\uA, P, q)$ to the integral of the observable $\Psi$ with respect  to the stationary measure $\eta$ of its associated Markov operator.

\begin{theorem}[Furstenberg's Formula] \label{Furstenberg's Formula} 
If the random cocycle $(\uA, P, q)$ has both singular and invertible components then
$$
L_1(\uA) = \int \Psi \; d\eta .
$$
\end{theorem}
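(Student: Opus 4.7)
The plan is to compute both sides as the same explicit combinatorial sum indexed by Markov excursions through $\Ainv$ starting at a singular symbol: on one side using Lemma~\ref{propL1} for $L_1(\uA)$, on the other using the explicit formula for $\eta$ in Proposition~\ref{prop stationary measure} for $\int \Psi \, d\eta$.

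Starting from Lemma~\ref{propL1}, for each word $\uom = (\om_0, \dots, \om_n) \in \Bscr_n(i,j)$ with $i, j \in \Asing$, I would expand the norm telescopically,
\begin{equation*}
\log \norm{A^n(\uom) r_i} = \sum_{k=1}^{n} \log \frac{\norm{A_{\om_k} v_{k-1}}}{\norm{v_{k-1}}},
\end{equation*}
where $v_{k-1} := A_{\om_{k-1}} \cdots A_{\om_1} r_i$ (with $v_0 := r_i$); crucially, the $k$-th summand depends on $\uom$ only through the matrix $A_{\om_k}$ and the projective direction $\hat v_{k-1} = \hat A_{\om_{k-1}} \cdots \hat A_{\om_1} \hat r_i$. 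I then interchange summation and group by the position $k \geq 1$ together with a prefix $(\om_0 = i, \om_1, \dots, \om_k)$ in which $\om_1, \dots, \om_{k-1} \in \Ainv$ while $\om_k \in \Ascr$ is unrestricted. The inner sum over all extensions of the prefix to some word in $\Bscr_n(i,j)$ (with $n \geq k$, $j \in \Asing$) computes the Markov probability of eventually entering $\Asing$ from state $\om_k$, which equals $1$: for $\om_k \in \Asing$ this is immediate (trivial extension $n = k$), while for $\om_k \in \Ainv$ it follows from the primitivity of $P$, which guarantees a.s. absorption into $\Asing$.

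After this collapse, the sum over $\om_k$ with weight $p_{\om_k \om_{k-1}}$ is precisely $\Psi(\om_{k-1}, \hat v_{k-1})$, and relabeling $m = k-1$ yields
\begin{equation*}
L_1(\uA) = \sum_{i \in \Asing} q_i \, \Psi(i, \hat r_i) + \sum_{i \in \Asing} q_i \sum_{m \geq 1} \sum_{(\om_1, \dots, \om_m) \in \Ainv^m} p_{\om_m \om_{m-1}} \cdots p_{\om_1 i} \, \Psi(\om_m, \hat A_{\om_m} \cdots \hat A_{\om_1} \hat r_i),
\end{equation*}
with the convention $\om_0 := i$ in the transition-probability product. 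In parallel, I decompose $\int \Psi \, d\eta = \sum_{j \in \Ascr} q_j \int \Psi(j, \hat v) \, d\eta_j(\hat v)$. For $j \in \Asing$ the identity $\eta_j = \delta_{\hat r_j}$ (established inside the proof of Proposition~\ref{prop stationary measure}) yields the first term $\sum_{i \in \Asing} q_i \Psi(i, \hat r_i)$; for $j \in \Ainv$, substituting the explicit expression for $\eta_j$ and observing that any $\uom \in \Bscr_n(s, j)$ with $j \in \Ainv$ forces all of $\om_1, \dots, \om_n$ to lie in $\Ainv$, the relabeling $\om_n \leftrightarrow j$, $s \leftrightarrow i$, $n \leftrightarrow m$ reproduces the second term exactly.

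The main difficulty is the bookkeeping in the reversal of summation orders; the essential probabilistic ingredient is the a.s. absorption into $\Asing$, which is immediate from primitivity of $P$. I also note that when $L_1(\uA) = -\infty$ (which happens iff some iterate $A_{\om_m} \cdots A_{\om_1} r_i$ vanishes, equivalently iff $\eta$ has an atom at some kernel $\hat k_i$ of a singular component $A_i$), both sides equal $-\infty$ and the identity is preserved under this convention.
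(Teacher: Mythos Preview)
Your proof is correct and takes a genuinely different route from the paper's.

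The paper starts from the stationary-measure side: it expands $\int \Psi\, d\eta$ via Proposition~\ref{prop stationary measure}, splits according to whether the terminal symbol $l$ lies in $\Asing$ or $\Ainv$, and then performs a sequence of algebraic manipulations (the ``append a letter'' trick turning words in $\Bscr_n(s,l)$ with $l\in\Ainv$ into words in $\Bscr_{n+1}(s,i)$, followed by a partial cancellation between the resulting telescoping sums) until the expression matches the formula of Lemma~\ref{propL1}. You start from the opposite side: you telescope $\log\norm{A^n(\uom)r_i}$ in Lemma~\ref{propL1}, regroup by prefix, and collapse the tail using the almost-sure absorption into $\Asing$ (which is the probabilistic content behind the paper's identity~\eqref{q formula}); the result then matches $\int \Psi\, d\eta$ directly. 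Your argument makes the underlying renewal structure explicit (the tail weight past position $k$ sums to~$1$), whereas the paper's argument keeps everything at the level of word combinatorics and identity~\eqref{q formula}. Both ultimately rely on the same ingredients; yours isolates the probabilistic step more cleanly, while the paper's avoids having to justify the interchange of the telescoped double sum (note that for $k<n$ the increments are uniformly bounded since $\omega_k\in\Ainv$, and only the $k=n$ term may be unbounded below, so the reordering is easily justified, but you should say so).
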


\begin{proof} 
Using the explicit formula of the stationary measure $\eta$ derived in Proposition~\ref{prop stationary measure},
\begin{align*}
\int  \Psi \ d\eta &= \sum_{l\in\Ascr}  \, \sum_{i\in \Ascr} p_{il}  \sum_{s\in \Asing}q_s\sum_{n=1}^{\infty}\sum_{\uom\in \Bscr_n (s, l)} p(\uomega)\log\frac{\norm{A_i\, A^n(\uomega)\,r_s}}{\norm{A^n(\uomega)\,r_s}} .
\end{align*}

For $\uom\in\Bscr_n (s, l)$ with $l\in\Asing$,  $\hat A^n(\uomega)\, \hat r_s = \hat A_l \cdots \hat A_{\om_1} \, \hat r_s = \hat r_l$. The expression above can then be split into
\begin{align*}
&\sum_{l\in\Asing} \sum_{i\in\Ascr}p_{il}\sum_{s\in \Asing}q_s\sum_{n=1}^{\infty}\sum_{\uomega\in\Bscr_n(s, l)} p(\uomega)\log\norm{A_i\,r_l}\\
&+ \sum_{l\in\Ainv}\sum_{i\in\Ascr}p_{il}\sum_{s\in \Asing}q_s\sum_{n=1}^{\infty}\sum_{\uomega\in\Bscr_n(s,l)} p(\uomega)\log\frac{\norm{A_i\, A^n(\uomega)\, r_s}}{\norm{A^n(\uomega)\,r_s}} .
\end{align*}

By~\eqref{q formula} we have $\sum_{s\in \Asing}q_s\sum_{n=1}^{\infty}\sum_{\uomega\in\Bscr_n(s, l)} p(\uomega) = q_l $ which, when used in the first term above, turns the last expressions into

\begin{align*}
&\sum_{l\in\Asing}\sum_{i\in\Ascr}p_{il}\,q_l\,\log\norm{A_i\,r_l}\\
&+\sum_{l\in\Ainv}\sum_{i\in\Ascr}p_{il}\sum_{s\in \Asing}q_s\sum_{n=1}^{\infty}\sum_{\uomega\in\Bscr_n(s, l)} p(\uomega)\log\norm{A_i\,A^n(\uomega)\,r_s}\\
&-\sum_{l\in\Ainv}\sum_{i\in\Ascr}p_{il}\sum_{s\in \Asing}q_s\sum_{n=1}^{\infty}\sum_{\uomega\in\Bscr_n(s, l)} p(\uomega)\log\norm{A^n(\uomega)\,r_s} \\
\\
&= \sum_{l\in\Asing}\sum_{i\in\Ascr}p_{il}\,q_l\,\log\norm{A_i\,r_l}\\
&+\sum_{i\in\Ascr}\sum_{s\in\Asing} q_s\sum_{n=2}^{\infty}\sum_{\uomega' \in \Bscr_n(s, i)}p(\uomega')\log\norm{A^n(\uomega')\, r_s} \\
&-\sum_{l\in\Ainv}\sum_{s\in \Asing}q_s\sum_{n=1}^{\infty}\sum_{\uomega\in\Bscr_n(s, l)} p(\uomega)\log\norm{A^n(\uomega)\,r_s} \, .
\end{align*}

The third line above turned into the sixth because $\sum_{i\in\Ascr} p_{il} = 1$. The second line turned into the fifth by adding the letter $i$ to the end of the word $\uom\in\Bscr_n (s, l)$ with $l\in\Ainv$, so that $\uom' := (\uom, i) \in \Bscr_{n+1} (s, i)$; note that $p (\uom') = p_{i l} \, p(\uom)$. 

\smallskip

The last two lines above cancel each other out partially (note the slight difference in the ranges of the index $n$ and the fact that the final letter in the words appearing in the penultimate term ranges over the whole alphabet, while in the ultimate  term it ranges only over the invertible part). 

The whole expression is then equal to
 \begin{align*}
& \sum_{l\in\Asing}\sum_{i\in\Ascr}p_{il}\,q_l\,\log\norm{A_i\,r_l}\\
&+\sum_{i\in\Asing}\sum_{s\in\Asing} q_s\sum_{n=2}^{\infty}\sum_{\uomega' \in \Bscr_n(s, i)}p(\uomega')\log\norm{A^n(\uomega')\, r_s} \\
&-\sum_{l\in\Ainv}\sum_{s\in \Asing}q_s \sum_{\uomega\in\Bscr_1(s, l)} p(\uomega)\log\norm{A^1(\uomega)\,r_s} \\
\\
&= \sum_{l\in\Ascr}\sum_{s\in\Asing} q_s \, p_{l s}\, \log\norm{A_l\,r_s}\\
&+\sum_{l\in\Asing}\sum_{s\in\Asing} q_s\sum_{n=2}^{\infty}\sum_{\uomega \in \Bscr_n(s, l)}p(\uomega)\log\norm{A^n(\uomega)\, r_s} \\
&-\sum_{l\in\Ainv}\sum_{s\in \Asing}q_s \, p_{l s} \log\norm{A_l \,r_s} \\
\\
&= \sum_{l\in\Asing}\sum_{s\in\Asing} q_s \, p_{l s}\, \log\norm{A_l\,r_s}\\
&+\sum_{l\in\Asing}\sum_{s\in\Asing} q_s\sum_{n=2}^{\infty}\sum_{\uomega \in \Bscr_n(s, l)}p(\uomega)\log\norm{A^n(\uomega)\, r_s} \\
& = \sum_{l\in\Asing}\sum_{s\in\Asing} q_s\sum_{n=1}^{\infty}\sum_{\uomega \in \Bscr_n(s, l)}p(\uomega)\log\norm{A^n(\uomega)\, r_s} \, .
\end{align*} 

The first line turned into the fourth after re-denoting the index $i$ by $l$ and the index $l$ by $s$; the final equality follows since $(s, l)$ is the only word $\uom\in\Bscr_1 (s, l)$ and in this case $p(\uom)=p_{l s}$ and $A^1 (\uom)=A_l$.
The expression in the last line is exactly the formula obtained in Lemma~\ref{propL1} for $L_1 (\uA)$ so the proof of Furstenberg's formula is complete.
\end{proof}

\begin{remark}
We note that the classical Furstenberg's formula (i.e.~\cite[Theorem 8.5]{Fur63}) for random cocycles in $\GL_m (\R)$, or other more recent versions thereof, are obviously not applicable to this singular setting. Moreover, the probabilistic approach used by Furstenberg and Kifer to establish such results (see e.g.~\cite[Theorem 1.4]{FK83}) is not immediately applicable either, since the observable $\Psi$ is not continuous, not even bounded on $\Ascr \times \Pp^1$. That is why we had to employ an ad-hoc argument which uses the explicit formula of the stationary measure. 
\end{remark}

\begin{remark}
The uniform ergodicity established in Theorem~\ref{Uniform Ergodicity} is the strongest form  of mixing of a transition operator, see~\cite[Section 16.2]{meyn_tweedie_glynn_2009} for a discussion of this and other types of mixing. It is equivalent to the quasi-compactness of the operator $\Qop$ on the {\em entire} space  $L^\infty(\Ascr\times\Pp^1)$. 

We note that in the classical case of $\GL_m (\R)$-valued cocycles satisfying an irreducibility assumption, the corresponding Markov operator was shown to be quasi-compact on the smaller subspace of observables that are H\"older continuous in the projective variable, endowed with the stronger H\"older norm, see for instance~\cite[Proposition 5.22]{DK-book}. This was then used to establish statistical properties (and the H\"older continuity) of the corresponding Lyapunov exponent. 

In the setting of this paper, however, the observable $\Psi$ appearing in Furstenberg's formula does {\em not} belong to the space $L^\infty(\Ascr\times\Pp^1)$, since it is not bounded. Thus the mixing property (i.e. the convergence $\Qop^n \varphi \to \int \varphi \, d \eta$) does not apply to $\Psi$, which is what makes the establishing of statistical properties in the next section an extremely technical matter.
\end{remark}

\begin{remark}
The stochastic dynamical system over the alphabet $\Ascr$, governed by the stochastic matrix $P$, induces a singular (partial) random walk on $\Ascr\times\Pp^1$, where some paths terminate upon reaching the kernel of the subsequent transition matrix. In fact, termination occurs with probability $1$ whenever the cocycle $\uA$  contains null words.

Nevertheless, the transition operator $\Qop \colon L^\infty(\Ascr\times\Pp^1)\to L^\infty(\Ascr\times\Pp^1)$, introduced at the beginning of this section to reflect the projective action of the Markov kernel, is desingularized. Specifically, if $i \in \Asing$, the term $\varphi(i, \hat A_i\, \hat v)$, which would  otherwise exhibit a discontinuity at the kernel of $A_i$, is replaced in the definition of the operator by the constant function $\varphi(i, \hat r_i)$.

This desingularization explains why the Markov operator $\Qop$ always admits stationary measures (in the sense of Definition~\ref{def stat measure}) and it is uniformly ergodic (Theorem~\ref{Uniform Ergodicity}). It also clarifies why Furstenberg's formula in Theorem~\ref{Furstenberg's Formula} remains valid for cocycles with both singular and invertible components.
\end{remark}

 \section{Statistical Properties}\label{StatProp}
 For simplicity of exposition, all statements in this section will be proven in the Bernoulli setting, but it will be clear that the arguments easily extend to the mixing Markov shift setting. 
The main difference between the two models, when it comes to these results, is the form of the transition operator and the explicit formula of its stationary measure, which is more cumbersome in the Markov case (as seen in  Proposition~\ref{prop stationary measure}). 

Let us summarize the relevant concepts in the Bernoulli setting.
Given the alphabet $\Ascr = \{1, \ldots, k\}$ and a partition $\Ascr = \Asing \sqcup \Ainv$ into two nonempty sets, consider the set $\cocycles \subset \Mscr$ of all $k$-tuples $\uA = (A_1, \ldots, A_k) \in \Mat_2(\R)^k$ such that  $\rank A_i = 1$ if $i \in \Asing$, $\det A_j > 0$ if $j\in\Ainv$ and, additionally, $\hat r_i \neq \hat k_j$ for all $i, j \in \Asing$. 

\smallskip

Recall that we identify such a $k$-tuple $\uA$ with the locally constant map $A \colon X \to \Mat_2 (\R)$, $A (\om) = A_{\om_0}$, where $\om = \{\om_n\}_{n\in \Z} \in X = \Ascr^\Z$. 
 

Given a probability vector $p = (p_1, \ldots, p_k)$ with $p_i > 0$ for all $i$, the pair $(\uA, p)$ determines the random linear cocycle 
$F \colon X \times \R^2 \to X \times \R^2$, $F (\om, v) = (\sigma \om, A (\om) v) = (\sigma \om, A_{\om_0} \, v)$
where $X$ is endowed with the product probability $\mu = p^\Z$.
As before, we also identify the cocycle $F$ with the tuple $\uA = (A_1, \ldots, A_k)$.

The Markov operator in this setting is $\Qop \colon L^\infty(\Pp^1) \to L^\infty(\Pp^1)$,
$$\Qop \varphi (\hat v) := \sum_{i\in\Ascr} \varphi (\hat A_i \hat v) \, p_i .$$

Given $n\ge 0$ and $s \in \Asing$, let $\Bscr_{n} (s)$ denote the set of $(n+1)$-tuples  $(s \uom)$ where $\uom=(\om_1, \ldots, \omega_{n}) \in \Ainv^n$.  
For such a word we write, as before, 
$A^{n}(s \uom):=A_{\omega_{n}}\, \ldots\, A_{\omega_2}\, A_{\omega_1}$  (with the convention that $A^0 (s \uom)$ is the identity matrix) and denote the quantity
 $p (\uom):= p_{\om_1} \cdots \, p_{\omega_{n}}$. 
 
 Let $q:= \sum_{i\in\Asing} p_i < 1$ and note that
\begin{align}\label{inca o suma}
\sum_{s \in \Asing} p_s \sum_{n=0}^\infty
	\sum_{\uomega \in \Ainv^n}  p(\uomega) = \sum_{s \in \Asing} p_s \sum_{n=0}^\infty (1-q)^n = q \, \frac 1 q = 1 .
\end{align}
  
With these notations we have the following explicit formula for the unique $\Qop$-stationary measure
$\eta\in\Prob (\Pp^1)$ (its derivation is quite straightforward compared to the Markov case, so we omit it):
$$
\eta =  \sum_{s \in \Asing} \sum_{n=0}^\infty
\sum_{\uom\in\Bscr_n(s)}  p(\uomega)\delta_{\hat A^n(s \uomega) \, \hat r_s} \, .
$$

We write $\Qop = \Qinv + \Qsing$, with $\Qinv \varphi (\hat v) := \sum_{i\in\Ainv} \varphi (\hat A_i \hat v) \, p_i$ and 
$\Qsing \varphi (\hat v) := \sum_{i\in\Asing} \varphi (\hat r_i) \, p_i$. Note that $\Qsing$ is a projection to the constant functions. This then easily implies by induction that for all $n\in\N$,
\begin{equation}\label{Qop Iterates}
\Qop^n = \Qinv^n + \Qsing\circ \Big(\sum_{i=0}^{n-1} \Qinv^i\Big)  \, .
\end{equation}
In particular, $\Qop^n - \Qinv^n$ takes any observable $\varphi$ to a constant function, which then implies the uniform ergodicity of the Markov operator.

That is, for all $n\in\N$ and  
$\varphi\in L^\infty(\Pp^1)$,
\begin{equation}\label{eq1001}
  \norm{  \Qop^n\varphi - \int\varphi\, d\eta }_\infty \leq C\, e^{-a\,n}\,\norm{\varphi}_\infty \, .
 \end{equation}

Finally, defining the observable $\psi \colon \Pp^1 \to [- \infty, \infty)$,
$$\psi (\hat v) := \sum_{i\in\Ascr} p_i \, \log \frac{\norm{A_i v}}{\norm{v}} \, ,$$
the Lyapunov exponent of the random cocycle $\uA$ is given by
\begin{equation}\label{F formula 10}
L_1 (\uA) = \int \psi (\hat v)  \, d \eta (\hat v) \, ,
\end{equation}
that is, a Furstenberg-type formula holds.

\medskip

The goal of this section is to establish an LDT estimate and a CLT for Lebesgue a.e.  cocycle $\uA \in \Mscr$. This will be obtained as a consequence of a stronger result essentially saying that if $t\mapsto\uA_t \in \cocycles$ is a one-parameter family of cocycles satisfying a positive winding condition, then an LDT estimate and a CLT hold for Lebesgue a.e. parameter $t$. By general results, see Cai, Duarte, Klein~\cite{CDK-paper3}, such limit laws hold as soon as an associated Markov operator satisfies an appropriate mixing condition. We will use a parameter elimination argument to show that for Lebesgue a.e. parameter $t$, the Markov operator corresponding to $\uA_t$ satisfies the desired condition. 
In order to apply the general results in~\cite{CDK-paper3} we need to associate to a given cocycle $\uA\in\cocycles$ a Markov operator on a slightly larger space, namely
$\bar{\Qop} \colon L^\infty(\Ascr \times \Pp^1) \to L^\infty(\Ascr \times \Pp^1)$ defined by
$$\bar{\Qop}\varphi(j,\hat{v}) = \sum_{i=1}^{k}  \varphi(i, \hat{A}_j \hat v) \, p_i \, .$$

Consider the projection $\pi \colon L^\infty(\Ascr \times \Pp^1) \to L^\infty(\Ascr)$ given by 
$$
\pi \varphi (\hat{v}) = \sum_{i=1}^{k}p_i  \varphi(i,\hat{v}) = \int_\Ascr \varphi (i, \hat v) \, d p (i) \, .
$$

Then the Markov operators $\bar \Qop$ and  $\Qop$ are related as follows.

\begin{lemma}\label{Diagrama}
The following diagram is commutative:
\[ \begin{tikzcd}
L^\infty(\Ascr \times \Pp^1) \arrow{r}{\bar{\Qop}} \arrow[swap]{d}{\pi} & L^\infty(\Ascr \times \Pp^1) \arrow{d}{\pi} \\%
L^\infty( \Pp^1) \arrow{r}{\Qop} & L^\infty(\Pp^1)
\end{tikzcd}
\]
That is, $\pi \circ \bar{\Qop}=\Qop \circ \pi$ and inductively, $\pi \circ \bar{\Qop}^n=\Qop^n \circ \pi$ for all $n\in\N$.
\end{lemma}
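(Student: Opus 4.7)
The plan is to verify the base case $\pi \circ \bar{\Qop} = \Qop \circ \pi$ by direct computation, and then obtain the statement for arbitrary $n$ by a one-line induction.

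For the base case, I would fix $\varphi \in L^\infty(\Ascr\times\Pp^1)$ and $\hat v\in\Pp^1$, then unfold both sides according to their definitions. On the left, $(\pi\circ\bar\Qop)\varphi(\hat v)$ is a double sum over indices $j$ (from the $\pi$-averaging) and $i$ (from the inner $\bar\Qop$-action), with weight $p_j\,p_i$ and integrand $\varphi(i,\hat A_j \hat v)$. On the right, $(\Qop\circ\pi)\varphi(\hat v)$ is also a double sum: the outer $\Qop$ sums over $j$ with weight $p_j$ and argument $\hat A_j\hat v$, while the inner $\pi$ sums over $i$ with weight $p_i$, again yielding weight $p_j\,p_i$ and integrand $\varphi(i,\hat A_j\hat v)$. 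The two expressions therefore coincide termwise, so $\pi\circ\bar\Qop=\Qop\circ\pi$.

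For the inductive step, assuming $\pi\circ\bar\Qop^{n}=\Qop^{n}\circ\pi$, I would simply write
\[
\pi\circ\bar\Qop^{n+1} \;=\; (\pi\circ\bar\Qop)\circ\bar\Qop^{n} \;=\; (\Qop\circ\pi)\circ\bar\Qop^{n} \;=\; \Qop\circ(\pi\circ\bar\Qop^{n}) \;=\; \Qop^{n+1}\circ\pi,
\]
using the base case at the second equality and the inductive hypothesis at the fourth.

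There is no real obstacle here; the commutation holds essentially because $\bar\Qop$ was defined so that its projective action depends only on $j$ (through $\hat A_j\hat v$) while its source variable $i$ is averaged with the same weights $p_i$ used by $\pi$. In other words, $\bar\Qop$ factors as ``apply the deterministic projective map indexed by $j$, then resample $i$ from $p$'', and $\pi$ is precisely the resampling step, so the two commute with $\Qop$ by Fubini-style reordering of the two independent sums.
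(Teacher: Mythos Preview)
Your proof is correct and follows essentially the same approach as the paper: both verify $\pi\circ\bar\Qop=\Qop\circ\pi$ by expanding each side into the identical double sum $\sum_{j}\sum_{i} p_j p_i\,\varphi(i,\hat A_j\hat v)$. The paper leaves the inductive step implicit, whereas you spell it out, but otherwise the arguments coincide.
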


\begin{proof}
Let $\varphi \in L^\infty(\Ascr \times \Pp^1)$. Then,

$$\pi \circ \bar{Q} (\varphi) (\hat v) = \sum_{j=1}^k p_j \bar{Q} \varphi (j, \hat v) 
= \sum_{j=1}^k \sum_{i=1}^k p_j \, p_i \, \varphi (i, \hat A_j \hat v) ,$$
while
$$Q \circ \pi  (\varphi) (\hat v) = \sum_{j=1}^k p_j \pi \varphi (\hat A_j, \hat v) 
= \sum_{j=1}^k \sum_{i=1}^k p_j \, p_i \, \varphi (i, \hat A_j \hat v) $$
which establishes the claim.
\end{proof}

\begin{corollary}\label{cor strong mixing}
The measure $p \times \eta$ is $\bar{\Qop}$-stationary and $\bar{Q}$ is uniformly ergodic. More precisely, there exist constants $c$ and $C$ such that
$$
\norm{\bar{Q}^n \varphi - \int \varphi \; d\mu d\eta}_\infty \leq C e^{-cn}\norm{\varphi}_\infty \quad \quad \forall \; n \in \N, \; \forall \; \varphi \in L^\infty(\Ascr \times \Pp^1) .
$$
In particular $p \times \eta$ is the unique $\bar{\Qop}$-stationary measure.
\end{corollary}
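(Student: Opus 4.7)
The plan is to exploit the commutative diagram from Lemma~\ref{Diagrama} to reduce every assertion about $\bar\Qop$ to the corresponding fact about $\Qop$, which has already been shown to be uniformly ergodic (see~\eqref{eq1001}).

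First I would observe that the defining formula can be rewritten as $\bar\Qop\varphi(j,\hat v) = (\pi\varphi)(\hat A_j\,\hat v)$, so $\bar\Qop\varphi$ depends on the alphabet coordinate only through the map $j\mapsto \hat A_j\hat v$. Iterating and applying Lemma~\ref{Diagrama} at each step, a straightforward induction yields the key identity
\begin{equation*}
  \bar\Qop^{n}\varphi(j,\hat v) \,=\, \bigl(\Qop^{n-1}(\pi\varphi)\bigr)(\hat A_j\,\hat v) \qquad (n\ge 1).
\end{equation*}
This is essentially the whole content of the corollary: the action of $\bar\Qop^n$ factors through one application of the dynamics in the fiber followed by $n-1$ iterations of $\Qop$ on the projected observable $\pi\varphi\in L^\infty(\Pp^1)$.

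With this identity in hand, stationarity of $p\times\eta$ is immediate. For any $\varphi\in L^\infty(\Ascr\times\Pp^1)$,
\begin{equation*}
\int \bar\Qop\varphi\, d(p\times\eta) = \int \pi(\bar\Qop\varphi)\, d\eta = \int \Qop(\pi\varphi)\, d\eta = \int \pi\varphi\, d\eta = \int\varphi\, d(p\times\eta),
\end{equation*}
where the penultimate equality is the $\Qop$-stationarity of $\eta$. The uniform ergodicity bound then follows by taking sup in $(j,\hat v)$ in the key identity and invoking~\eqref{eq1001} applied to $\pi\varphi$: since $\norm{\pi\varphi}_\infty\le\norm{\varphi}_\infty$ and $\int\pi\varphi\, d\eta=\int\varphi\, d(p\times\eta)$, we obtain
\begin{equation*}
\babs{\,\bar\Qop^{n}\varphi(j,\hat v)-\int\varphi\, d(p\times\eta)\,} \le C\, e^{-a(n-1)}\,\norm{\varphi}_\infty
\end{equation*}
uniformly in $(j,\hat v)$, which gives the stated estimate after adjusting constants.

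Finally, uniqueness is the standard consequence of uniform ergodicity: if $\nu$ is any $\bar\Qop$-stationary probability on $\Ascr\times\Pp^1$, then $\int\varphi\, d\nu=\int\bar\Qop^n\varphi\, d\nu$ for every $n$, and the right-hand side tends to $\int\varphi\, d(p\times\eta)$ uniformly in $(j,\hat v)$, forcing $\nu=p\times\eta$. There is no real obstacle here; the only point requiring any care is verifying the key identity by induction and being careful that the convention for $\hat A_j$ at singular $j$ (the constant map to $\hat r_j$) makes the formula $\bar\Qop\varphi(j,\hat v)=(\pi\varphi)(\hat A_j\hat v)$ hold as a pointwise identity, so that Lemma~\ref{Diagrama} can be applied without any measure-theoretic subtleties.
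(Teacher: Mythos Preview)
Your proposal is correct and follows essentially the same route as the paper. The paper likewise verifies stationarity via the chain $\int\bar\Qop\varphi\,d(p\times\eta)=\int\Qop(\pi\varphi)\,d\eta=\int\pi\varphi\,d\eta$, derives the identity $\bar\Qop^{n+1}\varphi(j,\hat v)=\Qop^n(\pi\varphi)(\hat A_j\hat v)$ (your ``key identity'' with shifted index) from Lemma~\ref{Diagrama}, and then invokes~\eqref{eq1001} for $\pi\varphi$; the uniqueness argument you spell out is the standard one the paper leaves implicit.
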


\begin{proof} For any observable $\varphi \in L^\infty(\Ascr \times \Pp^1)$ we have
 \begin{align*}
 \iint \bar \Qop \varphi (i, \hat v) d p(i) d \eta (\hat v) &= \int \pi (\bar \Qop \varphi) (\hat v) d \eta (\hat v) 
 = \int \Qop (\pi \varphi) (\hat v) d \eta (\hat v) \\
 &= \int \pi \varphi (\hat v) d \eta (\hat v) = 
 \iint \varphi (i, \hat v) d p (i) d \eta (\hat v) ,
 \end{align*}
 which shows that $p \times \eta$ is $\bar{\Qop}$-stationary. 
Moreover,
\begin{align*}
\bar{\Qop}^{n+1} \varphi (j,\hat{v}) &= \bar{\Qop} \bar{\Qop}^n \varphi (j,\hat{v}) = \sum_{i=1}^k p_i \bar{\Qop}^n \varphi (i,\hat{A}_j \hat{v}) \\
&= \pi (\bar{\Qop}^n \varphi)  (\hat{A}_j \hat{v}) = \Qop^n(\pi \varphi)(\hat{A}_j \hat{v}).
\end{align*}

Since by~\eqref{eq1001} the operator $\Qop$ is strongly mixing, the last term converges uniformly and exponentially fast to $\int \pi \varphi \; d\eta = \iint \varphi \; d p d\eta$, thus showing the strong mixing of $\bar \Qop$.
\end{proof}

\smallskip

Let $\Lambda \subset \R$ be a compact interval (normalized to have length $1$)  and let $A\colon \Lambda \to \cocycles$ be a smooth map. We think of this map as a one-parameter family of random cocycles so we use the subscript notation $t \mapsto A_t$. For its components we write $A_t (i)$ instead of $(A_t)_i$, while the fiber iterates are denoted by
$$A_t^n (\om) = A_t (\om_{n}) \cdots A_t (\om_1) \, .$$

For every parameter $t\in\Lambda$, denote by $\Qop_t$ and $\bar \Qop_t$ the Markov operators corresponding to the cocycle $A_t$ (defined as above). Moreover, let $\eta_t \in \Prob (\Pp^1)$ be the unique $\Qop_t$-stationary measure. 

All the results proven above for a given cocycle $\uA\in\Mscr$, namely the explicit formula of the stationary measure, Furstenberg's formula and the strong mixing of the Markov operator, apply to $A_t$ for all $t\in\Lambda$; since the probability vector $p = (p_1, \ldots, p_k)$ and the singular/invertible symbols do not change, all the mixing parameters are uniform in $t$. Finally, recall that the stationary measure is given by:
$$
\eta_t =  \sum_{s \in \Asing} p_s \, \sum_{n=0}^\infty
\sum_{\uom \in \Ainv^n}  p(\uomega) \delta_{\hat A_t^n(s\uomega) \, \hat r_s} \, .
$$   

\subsection{Assumptions}\label{assumptions} We assume that the family of cocycles $t \mapsto \uA_t$ is positively winding (see~\cite[Definition 4.1]{DDGK-paper1}) and that its singular components are constant. More precisely, we impose the following conditions on the smooth map   $\Lambda \ni t \mapsto \uA_t \in \cocycles$:
\begin{enumerate}

\item[(A1)] For all $i\in \Asing$, $A_t (i) \equiv A_i$.

\item[(A2)] There is $c_0 > 0$ such that for all $t \in \Lambda$, $j \in \Ainv$ and $\hat v \in \Pp^1$ we have
$$\frac{A_t (j)\, v \wedge \frac{d}{d t} \, A_t (j) v }{\norm{A_t (j) \, v}^2}   \ge c_0 \, .$$
\end{enumerate}

\begin{remark}By~\cite[Proposition 3.1]{BCDFK}, the quantity in item (A2) above, which we refer to as the rotation speed of the map $t\mapsto A_t$, can be characterized by
$$\frac{d}{d t} \, \hat A_t (j) \hat v = \frac{d}{d t} \, \frac{A_t (j) v}{\norm{A_t (j) v}} = \frac{A_t (j)\, v \wedge \frac{d}{d t} \, A_t (j) v }{\norm{A_t (j) \, v}^2} \, .$$
\end{remark} 

\begin{remark}
A more general version of the winding condition requires that the inequality  in assumption (A2) above holds for some iterate $\uA_t^{n_0}$ of the cocycle.  For simplicity we assume that $n_0=1$.
Moreover, if the assumption (A2) holds, then there exists $c_1 > 0$ that only depends on the family $A_t$, such that $\forall n \in \N$, $\forall \om \in \Ainv^n$, $\forall \hat{v} \in \Pp^1$, we have
\begin{equation} \label{Winding for An}
\frac{d}{dt} \frac{A_t^n(\om)v}{\norm{A_t^n(\om)v}} \geq c_1.
\end{equation}
In other words, if  $A_t$ is a family of positively winding cocycles, then so is $A_t^n$ for every $n \in \N$, with rotation speed uniformly (in $n$) bounded from below. For a proof of this statement see~\cite[Section 3.1]{BCDFK}, specifically 
Propositions 3.3 and 3.4.
\end{remark} 

\begin{example}
Given any tuple $\underline{A} = (A_1, \ldots, A_k) \in \cocycles$, the one-parameter family $\left[ - \pi , \pi \right] \ni t \mapsto A_t \in \Mat_2(\R)^k$,
$A_t (i) = A_i$ if $i \in \Asing$ and 
$A_t (j) = A_j \, R_{t} = A_j \, \begin{bmatrix} \cos  t & - \sin t \\ \sin  t & \cos t \end{bmatrix}$ if $j \in \Ainv$
satisfies the assumptions (A1) and (A2) above, passes through $\underline{A}$ and $\uA_t\in\cocycles$ for all $t\in\Lambda$.
\end{example}

\subsection{Some technical estimates}

In order to simplify the exposition in the estimates to follow, we will write $a \less b$ if there is some absolute or at least context universal constant $C < \infty$ such that $a \le C \, b$. Moreover, for an arc $\hat I \subset \Pp^1$, $m (\hat I)$ denotes its length, while $\Leb (E)$ stands for the Lebesgue measure of a subset $E$ of the real line.

\begin{lemma} \label{lemma finite arcs}
Given any $\varepsilon >0$, the set 
$$
\hat{I}_\varepsilon \colon = \Big\{ \hat{v} \in \Pp^1 \colon \norm{A_i \frac{v}{\norm{v}}} < \varepsilon \text{ for some } i \in \Asing \Big\}
$$
is a finite union of arcs with  $m(\hat{I}_\varepsilon) \less \varepsilon$.
\end{lemma}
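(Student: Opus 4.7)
The plan is to analyze each singular index $i \in \Asing$ separately and show that the set $\{\hat{v} \in \Pp^1 : \|A_i v/\|v\|\| < \varepsilon\}$ is a small arc (or possibly all of $\Pp^1$, in which case $\varepsilon$ must be of order $1$) concentrated around the kernel direction $\hat{k}_i$. Since $\Asing$ is finite, taking the union will give the desired bound.

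First, I would fix $i \in \Asing$ and use that $A_i$ has rank one. Let $k_i$ be a unit vector spanning $\ker(A_i)$ and let $k_i^\perp$ be a unit vector orthogonal to it. For a unit vector $v \in \R^2$ we write $v = \alpha\, k_i + \beta\, k_i^\perp$ with $\alpha^2 + \beta^2 = 1$, so that $A_i v = \beta\, A_i k_i^\perp$ and hence
$$ \norm{A_i v} = \abs{\beta}\, \norm{A_i k_i^\perp} = \abs{\beta}\, \norm{A_i}. $$
Parameterizing $\Pp^1$ by the angle $\theta \in [-\pi/2, \pi/2)$ between $\hat v$ and $\hat k_i$, we get $\abs{\beta} = \abs{\sin\theta}$, and thus
$$ \norm{A_i v} < \varepsilon \iff \abs{\sin \theta} < \varepsilon / \norm{A_i}. $$

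Since $\abs{\sin\theta} \ge \tfrac{2}{\pi}\abs{\theta}$ for $\theta \in [-\pi/2, \pi/2]$, the solution set in $\theta$ is an interval centered at $\theta = 0$ (i.e.\ an arc around $\hat k_i$) of length at most $\pi\,\varepsilon / \norm{A_i}$, provided $\varepsilon < \norm{A_i}$; if $\varepsilon \ge \norm{A_i}$ the solution is all of $\Pp^1$, which has length $\pi$, and then the trivial bound $m(\Pp^1) \le \pi \le \pi\, \varepsilon / \norm{A_i}$ still holds. Setting $C := \pi \sum_{i \in \Asing} 1/\norm{A_i} < \infty$ (which depends only on $\uA$), taking the union over the finite set $\Asing$ yields
$$ \hat I_\varepsilon = \bigcup_{i \in \Asing} \left\{ \hat v \in \Pp^1 : \norm{A_i v / \norm{v}} < \varepsilon \right\}, $$
which is a finite union of arcs with $m(\hat I_\varepsilon) \le C\, \varepsilon$.

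There is essentially no serious obstacle here; the argument is a direct geometric computation using the rank-one structure of each $A_i$ and the elementary estimate $\abs{\sin\theta} \gtrsim \abs{\theta}$ near the origin. The only subtlety worth flagging is the bookkeeping for $\varepsilon$ that is not small, which is handled by the trivial bound $m(\hat I_\varepsilon) \le m(\Pp^1)$.
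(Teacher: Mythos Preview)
Your proof is correct and follows essentially the same approach as the paper: decompose a unit vector relative to the kernel direction of each rank-one $A_i$, observe that $\norm{A_i v}$ is proportional to the component transverse to $k_i$, and conclude the sub-level set is a short arc around $\hat k_i$. Your version is in fact slightly cleaner---using the orthonormal basis $(k_i, k_i^\perp)$ rather than the paper's $(r_i, k_i)$ makes the norm identity $\norm{A_i v} = |\beta|\,\norm{A_i}$ immediate, and you handle the large-$\varepsilon$ bookkeeping explicitly, whereas the paper's computation is more cursory.
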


\begin{proof}
Let $i \in \Asing$ and consider the normalized directions $r_i$ and $k_i$ of the range and kernel of $A_i$, respectively, such that  $\norm{k_i} = \norm{r_i}=1$. Given any vector $v \in \R^2$, we write $v = \alpha_1 r_i + \alpha_2k_i$. Then
$$
\norm{A_i \frac{v}{\norm{v}}} = \frac{| \alpha_1 |}{\norm{v}}.
$$
Thus $\norm{A_i \frac{v}{\norm{v}}} < \varepsilon$ in a small interval around each kernel $k_i$. Moreover, $\hat{I}_\varepsilon$ is a finite union of arcs of length $\asymp 2\varepsilon$ each. Hence $m(\hat{I}_\varepsilon) \lesssim \varepsilon$.
\end{proof}

\begin{lemma} \label{lemma Leb measure of t in I}
Given any arc $\hat I \subset \Pp^1$,  $n\in\N$, $\om \in \Ainv^n$ and $\hat v \in \Pp^1$ we have
$$
\Leb \left\{ t \in \Lambda \colon \hat{A}^n_t(\om) \hat{v} \in \hat{I} \right\} \le  \frac{m(\hat{I})}{c_1} .
$$
\end{lemma}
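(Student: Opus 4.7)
\medskip

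\noindent\textbf{Proof proposal.}
The plan is to reduce the statement to a one-dimensional change-of-variables estimate, using the generalized positive winding bound~\eqref{Winding for An} as the single quantitative input. Fix $n$, $\om \in \Ainv^n$ and $\hat v \in \Pp^1$, and define the smooth map
$$\Phi \colon \Lambda \to \Pp^1, \qquad \Phi(t) := \hat A_t^n(\om)\,\hat v .$$
The key observation is that estimate~\eqref{Winding for An}, applied to the family $t \mapsto \uA_t^n$ (which inherits positive winding from $t \mapsto \uA_t$ since $\om$ consists entirely of invertible symbols), gives $\Phi'(t) \geq c_1 > 0$ for every $t \in \Lambda$. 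In particular $\Phi$ is strictly monotonic, hence a local diffeomorphism onto its image.

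Next I would rewrite the quantity to be estimated as
$$\Leb\bigl\{t \in \Lambda \colon \Phi(t) \in \hat I\bigr\} = \int_\Lambda \ind_{\hat I}(\Phi(t))\, dt$$
and perform the substitution $s = \Phi(t)$ on each connected component of $\Phi^{-1}(\hat I)$. Since $\Phi$ is a diffeomorphism on each such component with $|\Phi'(t)| \ge c_1$, Jacobian inversion gives $dt = \Phi'(t)^{-1}\, ds \le c_1^{-1}\, ds$, and the preimage of the arc $\hat I$ on this component has length bounded by $m(\hat I)/c_1$. By the monotonicity of $\Phi$, its image $\Phi(\Lambda)$ is a single arc traced out in the positive direction, so the preimage of $\hat I$ is a single sub-interval (modulo boundary), and the total bound $m(\hat I)/c_1$ follows directly.

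The only mildly delicate point is invoking~\eqref{Winding for An} for $n$-fold products of invertible components; this is already furnished by the paper via the cited Propositions 3.3 and 3.4 from~\cite{BCDFK}, so the lemma reduces to the elementary change-of-variables estimate sketched above. I do not expect any serious obstacle here; the statement is essentially a quantitative restatement of the lower bound on the rotation speed of $t \mapsto \hat A_t^n(\om)\hat v$.
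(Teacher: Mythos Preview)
Your proposal is correct and takes essentially the same approach as the paper: the paper's proof simply observes that, since $\Pp^1$ is one-dimensional, the map $t \mapsto \hat A_t^n(\om)\hat v$ can be regarded as real-valued with derivative bounded below by $c_1$ (via the winding condition~\eqref{Winding for An}), and then invokes ``basic calculus (the mean value theorem).'' Your change-of-variables argument is just a more explicit rendering of the same elementary estimate.
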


\begin{proof} Since the projective line $\Pp^1$ is one dimensional, we may regard the map $\Lambda \ni t \mapsto  \hat{A}^n_t(\om) \hat{v}$ as a real valued map, whose derivative (because of the winding condition) is bounded from below by $c_1$. The result then follows using basic calculus (the mean value theorem).
\end{proof}

Consider the maps $\varphi_t \colon \Ascr \times \Pp^1 \to [-\infty, \infty)$ and $\psi_t \colon \Pp^1 \to[-\infty, \infty)$ given by
$$
\varphi_t(i,\hat{v}) := \log \norm{A_t(i) \frac{v}{\norm{v}}} \, \text{ and } \, \psi_t(\hat{v})= \sum_{i \in \Ascr} p_i\varphi_t(i,\hat{v}) = \int_\Ascr \varphi_t (i, \hat v) d p(i) .
$$

Let $c=c(A)$ be a constant that satisfies

\begin{enumerate}
\item[(i)] $e^{-c } \le \norm{A_t(j)} \leq e^c \quad \forall t \in \Lambda \text{ and } \forall j \in \Ainv$.
\item[(ii)] $\norm{A_i} \leq e^c \quad \forall i \in \Asing$. 
\item[(iii)] $\norm{A_i r_l} \geq e^{-c} \quad \forall i, l \in \Asing $.
\end{enumerate}

Such a constant exists by the compactness of $\Lambda$ and the fact that $\uA \in \cocycles$, so $\hat k_i \neq \hat r_l$, thus  $A_i r_l \neq 0$ for all $i, l \in \Asing$. It follows that for all $j \in \Ainv$  and  $\hat{v} \in \Pp^1$ we have $\abs{  \varphi_t(j,\hat{v}) } \le c$. Moreover, the upper bound $\varphi_t(j,\hat{v}) \le c$  holds for every $j \in \Ascr$ and  $\hat{v} \in \Pp^1$.


%

\begin{lemma} \label{lem exp small measure}
There is  $C < \infty$ which depends on the constants $c, c_1$, so only on $\uA$, such that the following hold. \begin{enumerate}
\item[(i)] For all $n\in\N$, $\om \in \Ainv^n$,  $i \in \Asing$, $\hat v \in \Pp^1$ and $N\ge 0$ we have
$$
\Leb \{ t \in \Lambda \colon \varphi_t(i, \hat{A}^n_t(\om)\hat{v}) < - N \} \le C \, e^{-N} \, .
$$
Moreover, 
$$
\int_{\Lambda} \varphi_t^2(i,\hat{A}^n_t(\om)\hat{v}) \; dt \le C .
$$

\item[(ii)] For all  $n\in\N$,  $\om \in \Ascr^n$, $i, s \in \Asing$  and $N\ge 0$ we have
$$
\Leb \{ t \in \Lambda \colon \varphi_t (i,\hat{A}^n_t(\om)\hat{r}_s) < - N \} \le C \, e^{-N} \, .
$$
Moreover, 
$$
\int_{\Lambda} \varphi_t^2(i,\hat{A}_t^n(\om)\hat{r}_s) \; dt \le C .
$$

\item[(iii)] Furthermore, for all $i\in\Ascr$, $\om \in X$ and $n\in\N$,
$$\int_\Lambda \int_{\Pp^1} \varphi_t^2 (i, \hat A_t^n (\om) \hat v) \, d \eta_t (\hat v) \, d t \le C .$$

In particular,
$$\int_\Lambda \int_{\Pp^1} \psi_t^2 (\hat A_t^n (\om) \hat v) \, d \eta_t (\hat v) \, d t \le C .$$

%

\end{enumerate} 
\end{lemma}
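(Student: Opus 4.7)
The three items are proved in order, each building on the previous. Throughout, I will exploit Assumption (A1) (singular matrices are $t$-independent) and the iterated winding bound~\eqref{Winding for An}, together with the two geometric lemmas just established.

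\textbf{Part (i).} Since $i\in\Asing$, (A1) gives $A_t(i)\equiv A_i$, so $\varphi_t(i,\hat w)=\log\norm{A_i\, w/\norm{w}}$ depends on $\hat w$ only, not on $t$. Hence
$$\{\, t\in\Lambda:\varphi_t(i,\hat A_t^n(\om)\hat v)<-N\,\}=\{\, t\in\Lambda:\hat A_t^n(\om)\hat v\in\hat I_{e^{-N}}\,\}.$$
Lemma~\ref{lemma finite arcs} gives $m(\hat I_{e^{-N}})\lesssim e^{-N}$, and since $\om\in\Ainv^n$, Lemma~\ref{lemma Leb measure of t in I} applies and yields Lebesgue measure $\lesssim e^{-N}$, which is the tail bound. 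For the $L^2$ bound, use the layer-cake identity and the upper bound $\varphi_t(i,\cdot)\le c$:
$$\int_\Lambda\varphi_t^2\,dt=2\int_0^\infty s\,\Leb\{|\varphi_t|>s\}\,ds\le 2\int_0^c s\,ds+2\int_c^\infty s\,\Leb\{\varphi_t<-s\}\,ds\le c^2+2C\int_c^\infty s\,e^{-s}\,ds,$$
which is finite.

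\textbf{Part (ii).} For $\om\in\Ascr^n$, let $j_0$ be the largest index with $\om_{j_0}\in\Asing$, or set $j_0=0$ if no such index exists. If $j_0=0$ then $\om\in\Ainv^n$ and we apply (i) with $\hat v=\hat r_s$. If $1\le j_0<n$, then $\om_{j_0+1},\ldots,\om_n\in\Ainv$, and since $A_{\om_{j_0}}$ projects onto $\hat r_{\om_{j_0}}$, we get $\hat A_t^n(\om)\hat r_s=\hat A_t^{n-j_0}(\om_{j_0+1},\ldots,\om_n)\hat r_{\om_{j_0}}$, so (i) again applies, this time with the word of length $n-j_0$ in $\Ainv^{n-j_0}$ and initial direction $\hat r_{\om_{j_0}}$. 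Finally, if $j_0=n$, then $\hat A_t^n(\om)\hat r_s=\hat r_{\om_n}$ is constant in $t$, and by constant (iii) of the bound on $\uA$ we have $\norm{A_i\, r_{\om_n}}\ge e^{-c}$, so $|\varphi_t(i,\hat r_{\om_n})|\le c$ and both estimates become trivial.

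\textbf{Part (iii).} Substituting the explicit formula for $\eta_t$ (valid in the Bernoulli setting) and noting that $\hat A_t^n(\om)\,\hat A_t^m(s\uom')\,\hat r_s=\hat A_t^{n+m}(\tilde\om)\,\hat r_s$ where $\tilde\om:=(\om'_1,\ldots,\om'_m,\om_1,\ldots,\om_n)\in\Ascr^{n+m}$, Tonelli gives
$$\int_\Lambda\!\!\int_{\Pp^1}\!\varphi_t^2(i,\hat A_t^n(\om)\hat v)\,d\eta_t(\hat v)\,dt=\sum_{s\in\Asing}p_s\sum_{m=0}^\infty\sum_{\uom'\in\Ainv^m}\!p(\uom')\int_\Lambda\!\varphi_t^2(i,\hat A_t^{n+m}(\tilde\om)\hat r_s)\,dt.$$
Each inner integral is bounded by the same $C$ by part (ii), and the outer sum equals $1$ by~\eqref{inca o suma}; this proves the first estimate. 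For $\psi_t$, Jensen's inequality applied to the probability measure $p$ gives $\psi_t(\hat w)^2\le\sum_{i\in\Ascr}p_i\,\varphi_t^2(i,\hat w)$, and integrating over $\eta_t$ and $t$ reduces to the previous estimate.

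\textbf{Main obstacle.} The only subtle step is (i): one must reconcile the fact that the observable $\varphi_t(i,\cdot)$ is unbounded below near $\hat k_i$ with the desired \emph{uniform} $L^2$ control in the variable $t$. This is made possible precisely by the combination of (A1) (which decouples the $t$-dependence entirely from the singular matrix) with the winding hypothesis (A2) propagated to iterates, so that the preimage under $t\mapsto\hat A_t^n(\om)\hat v$ of the bad arcs near the $\hat k_i$'s has exponentially small measure.
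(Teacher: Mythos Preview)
Your proof is correct and follows essentially the same approach as the paper: part (i) combines Lemmas~\ref{lemma finite arcs} and~\ref{lemma Leb measure of t in I} with a layer-cake computation; part (ii) reduces to (i) by locating the last singular symbol in the word; and part (iii) substitutes the explicit formula for $\eta_t$, concatenates words, and invokes (ii) together with~\eqref{inca o suma}. One cosmetic point: in (iii) you appeal to (ii) for ``each inner integral'', but (ii) is stated only for $i\in\Asing$; you should note (as the paper does) that the case $i\in\Ainv$ is trivial since then $\sabs{\varphi_t(i,\cdot)}\le c$ everywhere.
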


\begin{proof}
(i) If $i\in\Asing$ then $\varphi_t (i, \hat w) < - N$ holds if and only if $\norm{A_i  \frac{w}{\norm{w}}} < e^{- N}$. By Lemma~\ref{lemma finite arcs} the set of such points $\hat w$ is a finite union of arcs with total measure of order $\varepsilon := e^{-N}$, and by Lemma~\ref{lemma Leb measure of t in I} the measure of the set of parameters $t$ for which $\hat{A}^n_t(\om)\hat{v}$ belongs to these arcs is $\less \varepsilon$. This proves the first statement in item (i).  

For the second statement, note that
\begin{align*}
&\int_{\Lambda} \varphi_t^2(i,\hat{A}^n_t(\om)\hat{v}) \; dt = \int_0^\infty \Leb \{ t \in \Lambda \colon \varphi_t^2 (i, \hat{A}^n_t(\om)\hat{v}) \geq x \} \; dx\\
&= \int_0^{2 c^2} \Leb \{ t \in \Lambda \colon \varphi_t^2 (i, \hat{A}^n_t(\om)\hat{v}) \geq x \} \; dx \\
&+ \int_{2 c^2}^{\infty} \Leb \{ t \in \Lambda \colon \varphi_t^2 (i, \hat{A}^n_t(\om)\hat{v}) \geq x \} \; dx \\
&\leq 2 c^2 + \int_{2 c^2}^{\infty} \Leb \{ t \in \Lambda \colon  \varphi_t (i, \hat{A}^n_t(\om)\hat{v})  \le - \sqrt{x}  \} \; dx \\
&\less c^2 + \int_{2 c^2}^{\infty} e^{-\sqrt{x}} \; dx \less 1 .
\end{align*}

The bound in the penultimate line above holds because $\Lambda$ has length one and, moreover,  the upper bound $ \varphi_t(i,\hat{w}) \leq c$ is valid for all $\hat w \in \Pp^1$, so that  $\varphi_t (i,\hat{A}^n_t(\om)\hat{v}) \le c < \sqrt{x}$ when $x\ge2c^2$. 

\smallskip

 (ii) Note that if $\om \in \Ainv^n$, then the statement follows from item (i). The difference here is that we consider any word $\om$, invertible or not, but begin with a singular vector, namely $r_s$. 
 
Let $\om = (\om_{0}, \ldots, \om_{n-1})  \notin \Ainv^n$. We split the argument into two cases: either $\om_{n-1} \in \Asing$ or else $\om_{n-1} \in \Ainv$ but there is $0 \leq j \leq n-2$ such that $\om_j \in \Asing$.

%
%
%
%
%

If $\om_{n-1} \in \Asing$, then 
$$
\hat{A}_t^n(\om)\hat{r_s} = \hat{A}_t(\om_{n-1})\hat{A}_t^{n-1}(\om)\hat{r_s} = \hat A_{\om_{n-1}} \, \hat{A}_t^{n-1}(\om)\hat{r_s} = \hat{r}_{\om_{n-1}}.
$$

Then by the choice of the constant $c$, for all parameters $t \in \Lambda$,
$$
\varphi_t (i, \hat{A}^n_t(\om)\hat{r_s}) = \log \norm{ \hat{A}_i \hat{r}_{\om_{n-1}} }  \geq -c ,
$$ 
so the set $\{ t \in \Lambda \colon \varphi_t (i,\hat{A}^n_t(\om)\hat{r}_s) < - N \}$ becomes empty for $N \ge c$ and the statement follows trivially.

If  $\om_{n-1} \in \Ainv$ and $\om_j \in \Asing$ for some index $0 \leq j \leq n-2$, let $n'$ be the largest such index and let $\om' :=  (\om_{n'+1}, \dots , \om_{n-1})\in \Ainv^{n-n'-1}$.
Then 
$$
\hat{A}^n_t(\om)\hat{r}_s = \hat{A}_t^{n-n'-1}(\om')\hat{A}_{\om_{n'}}\hat{A}^{n'}_t(\om)\hat{r}_s = \hat{A}^{n-n'-1}_t (\om') \hat{r}_{\om_{n'} }.
$$
Thus $\varphi_t(i, \hat{A}_t^n (\om)\hat{r}_s ) = \varphi_t(i, \hat{A}_t^{n-n'-1}(\om')\hat{r}_{\om_{n'}})$, with $\om' \in \Ainv^{n-n'-1}$, hence item (i) is applicable and the conclusion follows.

(iii) If $i \in \Ainv$ the statement is obvious since $\varphi_t^2 (i, \hat w) \le c^2$ for all $(t, \hat w) \in \Lambda \times \Pp^1$. Let us then fix $i\in\Asing$. Using the explicit formula of $\eta_t$ we have
\begin{align*}
&\int_\Lambda \int_{\Pp^1} \varphi_t^2 (i, \hat A_t^n (\om) \hat v) \, d \eta_t (\hat v) \, d t  \\
& =  \sum_{s \in \Asing}p_s \sum_{j=0}^\infty \sum_{\uom \in \Ainv^j} p(\uom) \int_{\Lambda} \varphi_t^2  (i, \hat A_t^n (\om) \, \hat{A}_t^j (s\uom)   \hat{r}_s) \, dt\\
& =  \sum_{s \in \Asing}p_s \sum_{j=0}^\infty \sum_{\uom \in \Ainv^j} p(\uom) \int_{\Lambda} \varphi_t^2 (i,\hat{A}^{n+j}_t(s\uom \om)\hat{r}_s ) \, dt\\
& \le \sum_{s \in \Asing}p_s \sum_{j=0}^\infty \sum_{\uom \in \Ainv^n} p(\uom) \, C = C,
\end{align*}
where the inequality in the last line follows from item (ii) above and the last equality comes from~\eqref{inca o suma}.
\end{proof}

%

Let $\tilde \eta \in \Prob (\Lambda \times \Pp^1)$ be the probability measure whose disintegration relative to the Lebesgue measure on $\Lambda$ is $\{\eta_t\}$, that is, the measure characterized by
$$\int \phi (t, \hat v) \, d \tilde\eta (t, \hat v) = \int_\Lambda \int_{\Pp^1}  \phi (t, \hat v) \, d \eta_t (\hat v) \, d t \quad \forall \phi \in L^\infty (\Lambda \times \Pp^1) \, .$$

\begin{lemma}\label{lema 10}
Given $i \in \Ascr$, $n\in \N$, $N > c$ and $\om \in X$, let  
\begin{align*}
P_{i, n, N}(\om) : = \left\{  (t, \hat{v}) \colon \varphi_t(i, \hat{A}^n_t(\om)\hat{v}) < -N  \right\} .
\end{align*}

Then $\tilde{\eta} (P_{i, n, N}(\om)) \le C   e^{-N}$.
\end{lemma}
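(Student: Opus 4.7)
The plan is to disintegrate $\tilde\eta$ against Lebesgue measure on $\Lambda$, substitute the explicit formula for $\eta_t$, swap order of integration, and apply Lemma~\ref{lem exp small measure}(ii) uniformly to each atom.

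First I would dispose of the case $i\in\Ainv$. Since $\det A_t(i)>0$ and $t\mapsto A_t(i)$ is continuous on the compact interval $\Lambda$, the smallest singular value of $A_t(i)$ is bounded below by some $e^{-c'}>0$ uniformly in $t$. Hence $\varphi_t(i,\hat w)\ge -c'$ for every $t\in\Lambda$ and $\hat w\in\Pp^1$, so $P_{i,n,N}(\om)=\emptyset$ once $N>c'$, which proves the claim trivially (after enlarging $C$ if necessary).

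Assume now $i\in\Asing$. Using the definition of $\tilde\eta$ and the explicit formula for $\eta_t$,
\begin{align*}
\tilde\eta(P_{i,n,N}(\om))
&= \int_\Lambda \eta_t\bigl(\{\hat v\colon \varphi_t(i,\hat A_t^n(\om)\hat v)<-N\}\bigr)\,dt\\
&= \int_\Lambda \sum_{s\in\Asing} p_s \sum_{j=0}^\infty \sum_{\uom\in\Ainv^j} p(\uom)\,\ind_{\{\varphi_t(i,\hat A_t^n(\om)\,\hat A_t^j(s\uom)\hat r_s)<-N\}}\,dt.
\end{align*}
By Fubini--Tonelli I can swap the integral with the triple sum. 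The composition of fiber iterates gives
$$
\hat A_t^n(\om)\,\hat A_t^j(s\uom)\hat r_s = \hat A_t^{n+j}(s\,\uom\,\om')\hat r_s, \quad \om':=(\om_1,\dots,\om_n),
$$
so that each summand becomes $\Leb\{t\in\Lambda\colon \varphi_t(i,\hat A_t^{n+j}(s\,\uom\om')\hat r_s)<-N\}$. Since $\uom\om'\in\Ascr^{n+j}$ and $i,s\in\Asing$, Lemma~\ref{lem exp small measure}(ii) bounds this Lebesgue measure by $C\,e^{-N}$ uniformly in $s$, $j$, $\uom$.

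Pulling the uniform bound out of the sum and invoking identity~\eqref{inca o suma},
$$
\sum_{s\in\Asing} p_s \sum_{j=0}^\infty \sum_{\uom\in\Ainv^j} p(\uom) = 1,
$$
yields $\tilde\eta(P_{i,n,N}(\om))\le C e^{-N}$, as desired. The only nontrivial step is the bookkeeping for concatenating the random prefix $s\uom$ (coming from the atoms of $\eta_t$) with the deterministic tail $\om'$ so that Lemma~\ref{lem exp small measure}(ii) applies verbatim to the combined word; there is no analytic obstacle once the explicit formula for the stationary measure is available.
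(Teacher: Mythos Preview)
Your proof is correct and follows essentially the same route as the paper: dispose of the $i\in\Ainv$ case by boundedness, then for $i\in\Asing$ expand $\tilde\eta$ via the explicit atomic formula for $\eta_t$, swap sum and integral, concatenate the words to reduce to Lemma~\ref{lem exp small measure}(ii), and finish with identity~\eqref{inca o suma}. Your bookkeeping with the finite prefix $\om'=(\om_1,\dots,\om_n)$ is in fact slightly more careful than the paper's, which writes the concatenation $s\uom\om$ informally.
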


\begin{proof} 
If $i\in\Ainv$ then $\varphi_t(i, \hat w) \ge - c > - N$ for all $\hat w \in \Pp^1$, so $P_{i, n, N}(\om)$ is empty. We then fix $i \in\Asing$ and note that 
\begin{align*}
 \tilde{\eta}(P_{i, n, N}(\om)) &= \int_{\Lambda} \int_{\Pp^1} \ind_{P_{i, n, N}(\om)} (t,\hat{v}) \; d\eta_t(\hat{v}) dt \\
 &= \sum_{s \in \Asing}p_s \sum_{j=0}^\infty \sum_{\uom \in \Ainv^j} p(\uom) \int_{\Lambda} \ind_{P_{i, n, N}(\om)} (t, \hat{A}_t^j (s\uom) \hat{r}_s) \; dt.
\end{align*}

Moreover,
\begin{align*}
(t, \hat{A}_t^j (s\uom) \hat{r}_s) \in P_{i, n, N}(\om) &\Longleftrightarrow \varphi_t(i, \hat{A}^n_t(\om)\hat{A}^j_t(s\uom)\hat{r}_s) <-N  \\
&\Longleftrightarrow \varphi_t(i,\hat{A}^{n+j}_t(s\uom \om)\hat{r}_s ) < -N.
\end{align*} 

By Lemma~\ref{lem exp small measure} item (ii), for all $j \in \N$,
\begin{align*}
\int_{\Lambda} \ind_{P_{i, n, N}(\om)} (t, \hat{A}^j_t(s\uom)\hat{r}_s) \; dt = \Leb\{ t \colon \varphi_t(i, \hat{A}^{n+j}_t(s\uom \om)\hat{r}_s <-N \} \lesssim e^{-N}
\end{align*}
and the conclusion follows from~\eqref{inca o suma}.
\end{proof}

Given any $N>c$, consider the truncation
$$\varphi_{t, N} := \max \left\{ \varphi_t, - N \right\} \, .$$
Note that $\varphi_{t, N} \in L^\infty(\Ascr\times\Pp^1)$ (a property that does not hold for $\varphi_t$) and $\norm{\varphi_{t, N}}_\infty \le N$. Moreover, 
$\varphi_t (i, \hat v) = \varphi_{t, N} (i, \hat v)$ if and only if $i\in\Ainv$ or else $i\in\Asing$ and $\varphi_t (i, \hat v) < - N$.

\begin{lemma} \label{lemma Estimates phi and psi} 
For all $i\in\Ascr$, $n\in\N$, $N > c$ and $\om\in X$ we have
$$
\int_{\Lambda} \int_{\Pp^1} \abs{ \varphi_t (i, \hat A_t^n (\om) \hat v) -  \varphi_{t, N} (i, \hat A_t^n (\om) \hat v)} \, d\eta_t (\hat{v}) \, dt \less e^{-N/3} \, .
$$
\end{lemma}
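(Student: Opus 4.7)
The plan is to reduce the integral bound to a straightforward Cauchy--Schwarz argument, leveraging the two preceding lemmas: the $L^2$-type bound in Lemma~\ref{lem exp small measure}(iii) and the exponential tail bound in Lemma~\ref{lema 10}. First I would unpack the pointwise meaning of the integrand. Since $\varphi_{t,N} = \max\{\varphi_t, -N\}$, the difference $\varphi_t - \varphi_{t,N}$ vanishes wherever $\varphi_t \ge -N$, and on the set where $\varphi_t < -N$ (which is precisely $P_{i,n,N}(\om)$ from Lemma~\ref{lema 10}, once we evaluate at $(i, \hat A_t^n(\om)\hat v)$), we have $|\varphi_t - \varphi_{t,N}| = -\varphi_t - N \le |\varphi_t|$. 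Hence
$$\abs{\varphi_t(i, \hat A_t^n(\om)\hat v) - \varphi_{t,N}(i, \hat A_t^n(\om)\hat v)} \le \abs{\varphi_t(i, \hat A_t^n(\om)\hat v)}\,\ind_{P_{i,n,N}(\om)}(t,\hat v).$$

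Next, I would apply Cauchy--Schwarz on the product space $\Lambda \times \Pp^1$ equipped with the measure $\tilde\eta$:
$$\int_{\Lambda}\!\int_{\Pp^1} \abs{\varphi_t - \varphi_{t,N}}\, d\eta_t\, dt \;\le\; \Bigl(\int \varphi_t^2(i, \hat A_t^n(\om)\hat v)\, d\tilde\eta\Bigr)^{1/2} \cdot \tilde\eta\bigl(P_{i,n,N}(\om)\bigr)^{1/2}.$$
The first factor is uniformly bounded by a constant depending only on $\uA$ by Lemma~\ref{lem exp small measure}(iii), while the second factor is controlled by $\sqrt{C\, e^{-N}} \less e^{-N/2}$ by Lemma~\ref{lema 10}. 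Since $e^{-N/2} \le e^{-N/3}$, the stated bound follows (in fact with a slightly better constant in the exponent).

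There is no real obstacle here; the lemma is a routine consequence of its two predecessors, and the only minor point to make explicit is that the measure $\tilde\eta$ indeed disintegrates so that the Cauchy--Schwarz estimate matches exactly the hypotheses of Lemmas~\ref{lem exp small measure}(iii) and~\ref{lema 10}. The exponent $1/3$ is not sharp; it is presumably stated in this looser form because subsequent arguments in the paper only require some definite exponential rate.
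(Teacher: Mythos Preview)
Your proof is correct and follows essentially the same route as the paper's: both restrict to the set $P_{i,n,N}(\om)$ where the truncation differs from $\varphi_t$, then apply Cauchy--Schwarz using Lemma~\ref{lem exp small measure}(iii) and Lemma~\ref{lema 10}. Your version is in fact slightly cleaner---by bounding $|\varphi_t - \varphi_{t,N}| \le |\varphi_t|\,\ind_{P_N}$ directly you avoid the paper's triangle-inequality step $\|g\|_{L^2} \le \sqrt{C} + N$ and obtain $e^{-N/2}$ rather than $(\sqrt{C}+N)e^{-N/2}$, which explains your observation that the exponent $1/3$ is not sharp.
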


\begin{proof} The statement is obvious when $i\in\Ainv$ since in this case $\varphi_t (i, \hat w) = \varphi_{t, N} (i, \hat w)$ for all $\hat w \in \Pp^1$.

We  fix $i\in\Asing$ and recall that $\varphi_t (i, \hat A_t^n (\om) \hat v) \neq \varphi_{t, N} (i, \hat A_t^n (\om) \hat v)$ if and only if $\varphi_t (i,  \hat A_t^n (\om) \hat v) < - N$, that is, if and only if  $(t, \hat v) \in P_{i, n, N} (\om) =: P_N$, which by Lemma~\ref{lema 10} is a set of $\tilde \eta$-measure $\less e^{- N}$. Then 
\begin{align*}
& \int_{\Lambda} \int_{\Pp^1} \abs{ \varphi_t (i,  \hat A_t^n (\om) \hat v) -  \varphi_{t, N} (i,  \hat A_t^n (\om) \hat v)} \, d\eta_t (\hat{v}) \, dt \\
& = \int_{\Lambda \times \Pp^1} \abs{ \varphi_t (i,  \hat A_t^n (\om) \hat v) -  \varphi_{t, N} (i,  \hat A_t^n (\om) \hat v)} \, \ind_{P_N} (t, \hat v) \,  d \tilde \eta (t, \hat v)\\
& \le \norm{g (t, \hat v)}_{L^2 (\tilde \eta)} \, \,  \tilde \eta (P_N)^{1/2}
\end{align*}
where $g (t, \hat v) :=  \varphi_t (i,  \hat A_t^n (\om) \hat v) -  \varphi_{t, N} (i,  \hat A_t^n (\om) \hat v)$ and we used Cauchy-Schwarz in the last inequality above. 

Moreover, by Lemma~\ref{lem exp small measure} item (iii),
$$\norm{g (t, \hat v)}_{L^2 (\tilde \eta)} \le \norm{\varphi_t (i,  \hat A_t^n (\om) \hat v)}_{L^2 (\tilde \eta)} + \norm{\varphi_{t, N} (i,  \hat A_t^n (\om) \hat v)}_{L^2 (\tilde \eta)} \le \sqrt{C} + N .$$

Thus
$$\int_{\Lambda} \int_{\Pp^1} \abs{ \varphi_t (i,  \hat A_t^n (\om) \hat v) -  \varphi_{t, N} (i,  \hat A_t^n (\om) \hat v)} d\eta_t (\hat{v}) dt \less ( \sqrt{C} + N)  e^{-N/2} \less e^{-N/3} $$
which completes the proof.
\end{proof}

\smallskip

\subsection{Large deviations estimates}
In order to prove large deviations estimates for cocycles $\uA\in\cocycles$, we use the following abstract recent result of Cai, Duarte, Klein~\cite[Theorem 1.1.]{CDK-paper3}. 

Let $\{Z_n\}_{n\ge 0}$ be a Markov chain with values in a compact space $M$, and transition kernel $M\ni x \mapsto K_x \in \Prob (M)$. Let $Q \colon L^\infty (M) \to L^\infty (M)$ be the Markov operator corresponding to this kernel, namely $Q \phi (x) = \int_M \phi (y) d K_x (y)$. If the operator $Q$ is strongly mixing (in an appropriate sense) on some subspace of observables $(\Ecal, \norm{\cdot}_\Ecal)$, relative to a stationary measure $\eta$, then every observable in this subspace $\Ecal$ satisfies (exponential) large deviations. More precisely,  for all $n \ge 1$ consider the $n$-th stochastic sum
$$S_n \phi := \phi \circ Z_{n-1} + \cdots + \phi \circ Z_0 .$$    

Then for all $\ep > 0$, 
\begin{equation}\label{ldt abstract eq}
\Pp \left\{ \abs{ \frac{1}{n} S_n \phi - \int_M \phi \, d\eta } >\epsilon \right\} \le 8e^{-c(\epsilon) n}
\end{equation}
where $c (\ep)$ is an explicit rate function. This rate function is essentially of order $c (\ep) \sim \ep^2 \, \norm{\phi}_\Ecal^{-2}$. See~\cite{CDK-paper3} for more details, specifically Definition 1.1, Theorem 1.1, Remark 1.1. 

\smallskip

In this work, the  Markov operator $\bar \Qop$ is uniformly ergodic (hence strongly mixing)  on the entire space of observables $L^\infty (\Ascr \times \Pp^1)$ and relative to the $L^\infty$-norm.  
Moreover, for the remaining part of the argument,  it is crucially  important to note the precise dependence of the large deviations estimate on the norm (in our case the $L^\infty$-norm) of the observable. 

We are ready to state and prove a stronger, parametric version of the large deviations type (LDT) estimate for cocycles in $\cocycles$, which will also imply the result stated in the introduction.

\begin{theorem} 
\label{Parametric LDT} 
Let $A \colon \Lambda \to \cocycles$ be a smooth family of cocycles satisfying Assumptions~\ref{assumptions}.
Then for every $\ep>0$ and $\tilde{\eta}$-a.e $(t,\hat{v})$, there exist $c_0 (\ep) > 0$ and $n_0 (\ep, t,\hat{v}) \in \N$ such that for every $n \geq n_0 (\ep, t, \hat v)$, 
$$
\mu \left\{ \om \in X \colon \abs{  \frac{1}{n} \log \norm{A_t^n (\om) v}  - L_1 (A_t)} >\ep   \right\} < e^{-c_0 (\ep) n^{1/3}} \, .
$$
Moreover, for Lebesgue a.e. $t \in \Lambda$, given any $\ep > 0$ there are $c_0 (\ep) > 0$ and $n_0 (\ep, t) \in \N$ such that for all $n\ge n_0 (\ep, t)$.
 $$
\mu \left\{ \om \in X \colon \abs{  \frac{1}{n} \log \norm{A_t^n (\om) }  - L_1 (A_t)} >\ep   \right\} < e^{-c_0 (\ep) n^{1/3}} \, ,
$$
that is, a (sub-exponential) large deviations type estimate holds for Lebesgue almost every cocycle along the curve $t \mapsto A_t$.
\end{theorem}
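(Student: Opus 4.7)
The plan is to realise $\log\|A_t^n(\omega)v\|-\log\|v\|$ as the stochastic sum $S_n\varphi_t$ along the Markov chain $Z_k=(\om_{k+1},\hat A_t^k(\om)\hat v)$ on $\Ascr\times\Pp^1$, whose transition operator $\bar\Qop_t$ is uniformly ergodic on $L^\infty(\Ascr\times\Pp^1)$ by Corollary~\ref{cor strong mixing}. Fubini and the Furstenberg-type formula~\eqref{F formula 10} give $L_1(A_t)=\int\varphi_t\,d(p\times\eta_t)$, so an LDT for $\tfrac1n S_n\varphi_t$ about its mean is exactly the vector form of the theorem. The obstruction to invoking the abstract LDT of~\cite{CDK-paper3} directly is that $\varphi_t$ equals $-\infty$ on the kernels of singular matrices, so it is not in $L^\infty$ and the CDK rate $c(\ep)\sim\ep^2\|\phi\|_\infty^{-2}$ is not immediately usable.

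The workaround is the truncation $\varphi_{t,N}:=\max\{\varphi_t,-N\}$, which satisfies $\|\varphi_{t,N}\|_\infty\le N$. Applying CDK to $\varphi_{t,N}$ gives
\begin{equation*}
\mu\bigl\{\om:\bigl|\tfrac1n S_n\varphi_{t,N}(\om)-\textstyle\int\varphi_{t,N}\,d(p\times\eta_t)\bigr|>\ep\bigr\}\,\le\, 8\,e^{-c\,\ep^2\,n/N^2}.
\end{equation*}
The two remaining errors are the bias $\int(\varphi_t-\varphi_{t,N})\,d(p\times\eta_t)$ and the pathwise tail $\tfrac1n S_n(\varphi_t-\varphi_{t,N})$. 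Lemma~\ref{lem exp small measure}(ii) extends to an arbitrary initial direction $\hat v_0$ once any word is reduced through its latest singular letter to a range vector, and a direct adaptation of Lemma~\ref{lemma Estimates phi and psi} then yields, uniformly in $n$, $\om$ and $\hat v_0$,
\begin{equation*}
\int_\Lambda \bigl|\varphi_t-\varphi_{t,N}\bigr|\bigl(i,\hat A_t^n(\om)\hat v_0\bigr)\,dt\;\lesssim\; e^{-N/3}.
\end{equation*}
Integrating in $\om$ and averaging over $k=0,\dots,n-1$ produces an $L^1$-in-$t$ bound on both error terms; Markov's inequality followed by Borel-Cantelli then ensures, for Lebesgue-a.e.\ $t$, that both terms are at most $e^{-N/4}$ for all sufficiently large $n$. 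Combining the three contributions and choosing $N\asymp n^{1/3}$ balances the CDK exponent $\ep^2 n/N^2$ against the truncation exponent $N/3$, yielding the rate $e^{-c_0(\ep)n^{1/3}}$ with $c_0(\ep)\sim\ep^{2/3}$, for $\tilde\eta$-a.e.\ $(t,\hat v)$, equivalently for Lebesgue-a.e.\ $t$ and $\eta_t$-a.e.\ $\hat v$.

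To upgrade from the pointwise vector version to the operator-norm version, I would rerun the same argument with the initial point fixed at $(j,\hat e_j)$ for $j=1,2$; since the generalised Lemma~\ref{lem exp small measure}(ii) is insensitive to the initial direction, this yields an LDT for each of $\|A_t^n e_1\|$ and $\|A_t^n e_2\|$ at Lebesgue-a.e.\ $t$. In two dimensions $\max_{j=1,2}\|A_t^n e_j\|\le\|A_t^n\|\le\sqrt{2}\,\max_{j=1,2}\|A_t^n e_j\|$, so a union bound promotes these to the operator-norm LDT. The main obstacle is the truncation-plus-elimination step: because $\varphi_t$ is not $L^1$ pointwise in $t$, the $t$-integration cannot be removed without losing a polynomial-in-$N$ factor via Markov, and this is precisely what forces the subexponential rate $n^{1/3}$ instead of the exponential rate one would obtain for bounded observables. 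The delicate bookkeeping lies in choosing the Borel-Cantelli ``good'' set of parameters $t$ independently of $\ep$, absorbing all $\ep$-dependence into $c_0(\ep)$ so that a single null set of $t$'s suffices for every $\ep>0$ simultaneously.
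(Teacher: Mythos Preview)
Your overall strategy---realise $\log\|A_t^n(\omega)v\|$ as $S_n\varphi_t$ along the chain with operator $\bar\Qop_t$, apply the abstract LDT of~\cite{CDK-paper3} to the truncation $\varphi_{t,N}$, control $\tfrac1n S_n(\varphi_t-\varphi_{t,N})$ and the bias via parameter elimination (Chebyshev plus Borel--Cantelli), and balance $N\asymp n^{1/3}$---is exactly the paper's. Two details, however, are not correct as written.

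First, the displayed bound $\int_\Lambda|\varphi_t-\varphi_{t,N}|(i,\hat A_t^n(\om)\hat v_0)\,dt\lesssim e^{-N/3}$ is claimed \emph{uniformly in $\hat v_0$}, but this fails for the empty word ($n=0$, i.e.\ the first summand of $S_n\varphi_t$): since singular matrices are $t$-independent, $\varphi_t(i,\hat v_0)=\log\|A_iv_0\|$ is constant in $t$, and for $\hat v_0$ near a kernel the integral is large (infinite at the kernel). For a fixed non-kernel $\hat v_0$ this first term eventually vanishes once $N>\max_i|\log\|A_iv_0\||$, so your argument can be salvaged pointwise; but the paper avoids this bookkeeping by integrating against $\tilde\eta=dt\,d\eta_t$ from the outset. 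Because every atom of $\eta_t$ is an iterated range, Lemma~\ref{lem exp small measure}(ii) applies directly at every step (including the first), and the whole error is bounded by $\iint|g_n|\,d\mu\,d\tilde\eta\lesssim\sqrt n\,e^{-N/3}$ via one Cauchy--Schwarz with the indicator of the bad set $B_n(\om)$.

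Second, and more seriously, your upgrade to the operator norm by rerunning at $\hat e_1,\hat e_2$ is unsafe: nothing rules out that some $A_s$ with $s\in\Asing$ has kernel $\hat e_1$, in which case $A_t^n(\om)e_1=0$ whenever $\om_1=s$ (probability $p_s>0$), and the vector LDT for $e_1$ fails outright---your ``insensitive to the initial direction'' assertion breaks exactly here. The paper instead observes that, by the winding condition, $t\mapsto\hat A_t(i)\hat r_s$ is nonconstant, so for Lebesgue-a.e.\ $t$ the atomic measure $\eta_t$ has at least two distinct support points. Since each atom carries positive $\eta_t$-mass, the first part of the theorem already yields the vector LDT at \emph{every} point of $\supp\eta_t$ for a.e.\ $t$, furnishing two linearly independent directions from which $\|A_t^n\|$ follows by the elementary inequality you cite.
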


\begin{proof}
Given a parameter $t\in\Lambda$, consider the Markov chain $Z_n \colon X \times \Pp^1 \to \Ascr \times \Pp^1$,
$$Z_n (\om, \hat v) := (\om_{n+1}, \hat A_t^n (\om) \hat v) \, .$$

It is easy to see that its transition kernel is given by
$$\Ascr \times \Pp^1 \ni (\om_1, \hat v) \mapsto p \times \delta_{\hat A_t (\om_1) \hat v} ,$$
so that its corresponding Markov operator is precisely the operator $\bar \Qop_t$ defined above, whose stationary measure is $p \times \eta_t$. Recall that by Corollary~\ref{cor strong mixing}, $\bar \Qop_t$ is strongly mixing on $(L^\infty (\Ascr \times \Pp^1), \norm{\cdot}_\infty)$. 

For the observable $\varphi_t(i,\hat{v}) := \log \norm{A_t(i) \frac{v}{\norm{v}}}$ we have that 
\begin{align*}
\varphi_t \circ Z_{n-1} (\om, \hat v) = \varphi_t (\om_n, \hat A_t^{n-1} (\om) \hat v) 
& = \log \norm{  A_t (\om_n) \, \frac{ \hat A_t^{n-1} (\om) \hat v }{\norm{  \hat A_t^{n-1} (\om) \hat v }  }   } \\
& = \log \norm{ \hat A_t^{n} (\om) \hat v }  -  \log \norm{ \hat A_t^{n-1} (\om) \hat v }   \, .
\end{align*}
Thus
$$\frac{1}{n} \, S_n \varphi_t (\om, \hat v) =  \frac{1}{n} \log \norm{A_t^n (\om) v} $$
where $v$ is a unit vector representative of the point $\hat v$.

Moreover, by Furstenberg's formula~\eqref{F formula 10},
$$L_1 (A_t) = \int \psi_t (\hat v) \, d \eta_t (\hat v) = \int \varphi_t (i, \hat v) \, d (p\times\eta_t) (i, \hat v) .$$
It is not difficult to verify  that Lemma~\ref{lem exp small measure} item (iii) implies that for Lebesgue a.e. $t\in\Lambda$, $L_1 (A_t) > - \infty$.

Our statement (in fact, a stronger version thereof) would then immediately follow  from~\eqref{ldt abstract eq} if the observable $\varphi_t$ were bounded. That is of course not the case, precisely because of the singular matrices. The idea is then to use the truncations $\varphi_{t, N}$ (for which estimate~\eqref{ldt abstract eq} is applicable) as a substitute for $\varphi_t$, where the order $N$ of the truncation is adapted to the scale $n$ at which we prove the LDT estimate.  

\smallskip

More precisely, given any large enough scale $n\in\N$, let $N_n \in \N$ be a truncation order to be chosen later (anticipating, the choice that optimizes the estimates will turn out to be $N_n \sim n^{1/3}$). We will transfer the LDT estimate at scale $n$  from $\varphi_{t, N_n}$  to $\varphi_t$ by eliminating an $\tilde \eta$-negligible set of parameters $(t, \hat v)$. 

For all $n\in \N$, define the following real-valued functions on $X \times \Lambda \times \Pp^1$:
\begin{align*}
\Delta_n (\om, t , \hat v) & := \frac{1}{n} S_n \varphi_t (\om, \hat v) - \int  \varphi_t \, d p \times \eta_t  \\
E_n (\om, t , \hat v) & := \frac{1}{n} S_n \varphi_{t, N_n} (\om, \hat v) - \int  \varphi_{t, N_n} \, d p \times \eta_t\\
g_n (\om, t, \hat v) & := \frac{1}{n} S_n \varphi_t (\om, \hat v) - \frac{1}{n} S_n \varphi_{t, N_n} (\om, \hat v) .
\end{align*}

Then
\begin{equation}\label{super duper eq}
\abs{ \Delta_n (\om, t , \hat v)  - E_n (\om, t , \hat v)  } \le \abs{ g_n (\om, t, \hat v) } + \int  \abs{ \varphi_t - \varphi_{t, N_n} } \, d p \times \eta_t . 
\end{equation}

Note that 
$$\Delta_n (\om, t , \hat v)  = \frac{1}{n} \log \norm{A_t^n (\om) v} - L_1 (A_t) \, .$$

Moreover, given $\ep > 0$, the general LDT estimate~\eqref{ldt abstract eq} and the comments following it show that  {\em for every} $(t, \hat v) \in \Lambda \times \Pp^1$, 
\begin{equation}\label{ldt truncation}
\mu \left\{ \om \in X \colon \abs{ E_n (\om, t , \hat v) } > \frac \ep 2 \right\} \le 8  \, e^{- c_0 (\ep) N_n^{-2} \, n}
\end{equation}
where $c_0 (\ep)$ is essentially of order $\ep^2$.
Using Lemma~\ref{lemma Estimates phi and psi} we have
\begin{align*}
&\int \left(  \int  \abs{ \varphi_t - \varphi_{t, N_n} } \, d p \times \eta_t \right) \, dt \\
&= \sum_{i\in\Ascr} p_i \int_{\Lambda} \int_{\Pp^1} \abs{ \varphi_t (i,  \hat v) -  \varphi_{t, N_n} (i,  \hat v)} \, d\eta_t (\hat{v}) \, dt \less e^{-N_n/3} \, .
\end{align*}

Moreover, we will also show that 
\begin{equation}\label{eq 101010}
\iint \abs{ g_n (\om, t, \hat v) } \, d \mu (\om) \, d \tilde \eta (t, \hat v) \less \sqrt{n} \, e^{-N_n/3} \, .
\end{equation}

Indeed, note that 
$$g_n (\om, t, \hat v) = \frac{1}{n} \sum_{j=1}^{n} \left( \varphi_t (\om_j, \hat A_t^{j-1} (\om) \hat v) -  \varphi_{t, N_n} (\om_j, \hat A_t^{j-1} (\om) \hat v) \right) \, .$$

Given $\om\in X$, if $g_n (\om, t, \hat v) \neq 0$ then there is $1\le j \le n$ such that $\varphi_t (\om_j, \hat A_t^{j-1} (\om) \hat v) \neq  \varphi_{t, N_n} (\om_j, \hat A_t^{j-1} (\om) \hat v)$, that is, $\varphi_t (\om_j, \hat A_t^{j-1} (\om) \hat v) < - N_n$, or $(t, \hat v) \in P_{\om_j, j-1, N_n} (\om)$. 

Thus for $(t, \hat v) \notin B_n (\om) := \bigcup_{j=1}^n P_{\om_j, j-1, N_n} (\om)$, where $\tilde \eta (B_n (\om) \less n e^{- N_n}$, we have that $g_n (\om, t, \hat v) = 0$. 

Using Fubini and Cauchy-Schwarz it follows that
\begin{align*}
& \iint \abs{ g_n (\om, t, \hat v) } \, d \mu (\om) \, d \tilde \eta (t, \hat v) 
 = \iint \abs{ g_n (\om, t, \hat v) } \, d \tilde \eta (t, \hat v) \, d \mu (\om) \\
& = \iint \abs{ g_n (\om, t, \hat v) } \, \ind_{B_n (\om)} (t, \hat v) \, d \tilde \eta (t, \hat v) \, d \mu (\om)  \\
& \le \int \norm{g_n (\om, \cdot)}_{L^2 (\tilde \eta)} \, \, \tilde \eta (B_n (\om))^{1/2} \, d \mu (\om) \\
& \less \sqrt{n} \, e^{-N_n/2} \, \int \norm{g_n (\om, \cdot)}_{L^2 (\tilde \eta)} \, d \mu (\om) \, .
\end{align*}

By Lemma~\ref{lem exp small measure} item (iii), for all $\om\in X$, 
$$\int \varphi_t^2 (\om_j, \hat A_t^{j-1} (\om) \hat v) \, d \eta_t (\hat v) \, d t \le C ,$$
so 
$$\norm{ \frac{1}{n} S_n \varphi_t (\om, \hat v) }_{L^2 (\tilde \eta)} \le \sqrt{C} , $$
while $\abs{ \varphi_{t, N_n} (i, \hat w)} \le N$ so
$$\norm{ \frac{1}{n} S_n \varphi_{t, N_n} (\om, \hat v) }_{L^2 (\tilde \eta)} \le N_n .$$

Thus for all $\om \in X$, $\norm{g_n (\om, \cdot)}_{L^2 (\tilde \eta)} \le  \sqrt{C} + N_n$, which then easily implies~\eqref{eq 101010}.

Integrating the inequality~\ref{super duper eq} with respect to $\mu$ and $\tilde \eta$ we obtain
\begin{equation*}\label{super duper eq2}
\iint \abs{ \Delta_n (\om, t , \hat v)  - E_n (\om, t , \hat v)  } \, d \mu (\om) \, d \tilde \eta (t, \hat v) \less \sqrt{n} \, e^{-N_n/3} \, .
\end{equation*}

Applying Chebyshev's inequality to the function 
$$(t, \hat v) \mapsto \int \abs{ \Delta_n (\om, t , \hat v)  - E_n (\om, t , \hat v)  } \, d \mu (\om) ,$$ 
there is a set $\B_n \subset \Lambda \times \Pp^1$ such that $\tilde \eta (\B_n) \less \sqrt{n} \, e^{-N_n/6}$ and if $(t, \hat v) \notin \B_n$ then
 $$\int \abs{ \Delta_n (\om, t , \hat v)  - E_n (\om, t , \hat v)  } \, d \mu (\om) < e^{-N_n/6} \, .$$ 

Since $\sum_{n\ge1} \tilde \eta (\B_n) < \infty$, by Borel-Cantelli, $\tilde \eta$-almost every $(t, \hat v)$ belongs to a finite number of sets $\B_n$. That is, for  $\tilde \eta$-a.e. $(t, \hat v)$ there is $n_0 (t, \hat v) \in \N$ such that for all $n\ge n_0 (t, \hat v)$ we have $(t, \hat v) \notin \B_n$, so 
$$\int \abs{ \Delta_n (\om, t , \hat v)  - E_n (\om, t , \hat v)  } \, d \mu (\om) < e^{-N_n/6} \, .$$ 

Applying again Chebyshev we have that
$$\mu \left\{ \om\in X \colon \abs{ \Delta_n (\om, t , \hat v)  - E_n (\om, t , \hat v)  } > e^{-N_n/12} \right\} 
\le e^{-N_n/12} .$$

We have shown that there is a set $\Omega_n \subset X$ with $\mu (\Omega_n) < e^{-N_n/12}$ such that if $\om \notin \Omega_n$ then
$$ \abs{ \Delta_n (\om, t , \hat v)  - E_n (\om, t , \hat v)  }  < e^{-N_n/12} \le \frac \ep 2 $$
provided $n$ is large enough depending on $(t, \hat v)$ as above and on $\ep$.

\smallskip

Thus if $ \abs{ \Delta_n (\om, t , \hat v)} > \ep$ then either $\om\in\Omega_n$ or  $\abs{ E_n (\om, t , \hat v)} > \frac \ep 2$, the later holding for $\om$ in a set of measure $\le 8  \, e^{- c_0 (\ep) N_n^{-2} \, n}$ by~\eqref{ldt truncation}. 

We conclude that for $\tilde \eta$-a.e.$(t, \hat v)$ and for all $\ep > 0$ there is $n_0 = n_0 (\ep, t, \hat v) \in \N$ such that for all $n\ge n_0$ we have 
$$\mu \left\{ \om \colon \abs{ \Delta_n (\om, t , \hat v)} > \ep \right\} < e^{-N_n/12} + 8  \, e^{- c_0 (\ep) N_n^{-2} n}
<   e^{- c_0 (\ep) \, n^{1/3} }
$$
provided we choose $N_n \sim n^{1/3}$. This establishes the first statement of the theorem.

In particular, by the definition of the measure $\tilde \eta$, for Lebesgue almost every parameter $t\in\Lambda$ and for $\eta_t$-a.e. point $\hat v$  this first statement of the theorem holds, namely
\begin{equation}\label{ldt vectors}
\mu \Big\{ \om \in X \colon \abs{  \frac{1}{n} \log \norm{A_t^n (\om) v}  - L_1 (A_t)} >\ep   \Big\} < e^{-c_0 (\ep) n^{1/3}} \, .
\end{equation}

The support of the measure $\eta_t$ is the set 
$$\left\{ \hat A_t^n (s\uom) \hat r_s \colon s \in \Asing, \, \uom \in \Ainv^n, \, n\in \N \right\} .$$

There is at least one singular symbol, $s \in \Asing$ and at least one invertible one $i \in \Ainv$. By the positive winding condition,
$$\frac{d}{dt} \hat A_t (i) \, \hat r_s \ge c_0 > 0 ,$$
so $t \mapsto \hat A_t (i) \, \hat r_s$ cannot be constant on a set of positive Lebesgue measure. Therefore 
$\hat A_t (i) \, \hat r_s \neq \hat r_s$ for Lebesgue almost every $t$.

It follows that for Lebesgue almost every $t\in\Lambda$, the support of the measure $\eta_t$ has at least two points, $\hat e_t^1$ and $\hat e_t^2$. Thus~\eqref{ldt vectors} holds for two linearly independent vectors $e_t^1$ and $e_t^2$ so it must hold  with the matrix norm $ \norm{A_t^n (\om)}$, which establishes the second statement of the theorem.
\end{proof}

Let us now derive the version of the LDT estimate stated in the introduction.

\begin{proof}[Proof of Theorem~\ref{thm: ldt intro}] Recall that given the partition of the alphabet  $\Ascr = \Asing \sqcup \Ainv$ into two nonempty sets, we denote by
$\Mscr$ the set of all cocycles $\uA = (A_1, \ldots, A_k) \in \Mat_2(\R)^\Ascr$ with  $\rank A_i = 1$  if  $i\in\Asing$ and $\det(A_j) >  0$ if $j\in\Ainv$. Moreover, $\Mscr^*$ denotes the subset of cocycles in $\Mscr$ for which $\hat r_i \neq \hat k_j$ for all $i, j \in \Asing$.
 
Firstly note that the set $\Mscr \setminus \cocycles$, consisting of cocycles for which a kernel $\hat r_i$ coincides with a range $\hat k_j$ for some symbols $i, j \in \Asing$, has zero Lebesgue measure in the analytic manifold $\Mscr$.
Secondly, by~\cite[Corollary 4.5]{DDGK-paper1}, $L_1 (\uA) > - \infty$ for Lebesgue almost every $\uA\in\Mscr$.\footnote{Note that the partition $\Ascr=\Asing\sqcup\Ainv$ in this paper is fixed, and we use the notation $\Mscr$ to refer to the space that  in~\cite{DDGK-paper1}  was denoted by  $\Mscr_I$ with $I=\Asing$.}
We may then neglect cocycles in these negligible sets. 

Given any cocycle $\uA\in\Mscr$,  for all parameters $t\in [-\pi, \pi]$
define
$\uA_t:=(A_t (1), \ldots, A_t (k))$, where
$$ A_t (i):= \begin{cases}
	A_i & \text{ if }  \, i \in \Asing \\
A_i \, R_t &\text{ if } \, i \in \Ainv
\end{cases} \quad \text{ and } \quad R_t:= \begin{bmatrix}
\cos t & -\sin t \\ \sin t & \cos t 
\end{bmatrix} . $$

This is an analytic family of cocycles. If $\uA\in\cocycles$ then $\uA_t$ takes values in $\cocycles$ and it satisfies the Assumptions~\ref{assumptions}; then by Theorem~\ref{Parametric LDT}, $\uA(t)$ satisfies large deviations of sub-exponential type for Lebesgue almost every $t\in \R$.
Therefore, since the map $\Mscr \times [-\pi,  \pi] \to \Mscr$,
$(\uA, t)\mapsto \uA_t$ is a submersion, Lebesgue almost every
$\uA\in \Mscr$ satisfies similar large deviations estimates.
\end{proof}

\begin{subsection}{Central limit theorem}

We now establish a central limit theorem for cocycles with singular components.  The proof uses the following abstract central limit theorem due to Gordin-Liv\v sic~\cite{Go78}.

Let $M$ be a compact metric space, let $M\ni x \mapsto K_x \in \Prob(M)$ be an ergodic\footnote{This automatically hods when there is a unique stationary measure.} transition kernel, let $Q$ be the corresponding Markov operator and let $\eta$ be a $Q$-stationary measure.   
Given an observable $\varphi \in L^2 (M, \eta)$ with $\int \varphi \, d \eta =0$, if  $ \sum_{n=0}^\infty \norm{Q^n \varphi}_2 < \infty$, we can define $g \in L^2(M, \eta)$ by $g := \sum_{n=0}^\infty Q^n \varphi$. 
Then $\varphi = g - Q g$. Let $\sigma^2(\varphi) := \norm{g}_2^2 - \norm{Q g}^2_2 \in [0, \infty)$.

\begin{theorem} \label{thm Abstract CLT} (Gordin-Liv\v sic)
Let $M, Q, \eta, \varphi$ be as above and  assume that 
$$
\sum_{n=0}^\infty \norm{Q^n\varphi}_2 < \infty \quad \text{and} \quad \sigma^2(\varphi) > 0.
$$
Then we have the following convergence to the standard normal distribution:
$$
\frac{S_n\varphi}{\sigma \sqrt{n}} \to \mathcal{N}(0,1).
$$
Moreover, this holds not only relative to the Markov probability with initial distribution $\eta$, but also with initial distribution $\delta_x$ for $\eta$-a.e. $x$.
\end{theorem}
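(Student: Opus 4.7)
My plan is to apply Gordin's classical martingale-coboundary method. The summability hypothesis $\sum_{n\ge 0} \norm{Q^n\varphi}_2 < \infty$ guarantees that $g := \sum_{n=0}^\infty Q^n\varphi$ is a well-defined element of $L^2(M,\eta)$, and summation by parts gives the Poisson-type identity $\varphi = g - Qg$ already recorded in the statement. The strategy is then to substitute this cohomological representation of $\varphi$ into the Birkhoff sum $S_n\varphi$ and exploit the Markov property to exhibit a martingale whose differences are square-integrable with explicit variance.

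Let $\{Z_k\}_{k\ge 0}$ be the Markov chain driven by the kernel $K$, set $\mathcal{F}_k := \sigma(Z_0,\ldots,Z_k)$, and define
\begin{equation*}
D_k := g(Z_{k+1}) - (Qg)(Z_k), \qquad M_n := \sum_{k=0}^{n-1} D_k .
\end{equation*}
Since $\EE[g(Z_{k+1}) \mid \mathcal{F}_k] = (Qg)(Z_k)$ by the Markov property, $\{D_k\}$ is a martingale difference sequence with respect to $\{\mathcal{F}_k\}$. A short telescoping using $\varphi(Z_k) = g(Z_k) - (Qg)(Z_k)$ yields the basic identity
\begin{equation*}
S_n\varphi = M_n + g(Z_0) - g(Z_n) .
\end{equation*}
Under the stationary Markov probability $\Pp_\eta$, the sequence $\{D_k\}$ is stationary and (by ergodicity of the chain) ergodic, with common variance
\begin{equation*}
\EE_\eta[D_k^2] = \norm{g}_2^2 - \norm{Qg}_2^2 = \sigma^2(\varphi) > 0 .
\end{equation*}

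Billingsley's CLT for stationary ergodic martingale differences then gives $M_n/(\sigma\sqrt{n}) \stackrel{d}{\to} \mathcal{N}(0,1)$ under $\Pp_\eta$, while Slutsky's lemma removes the boundary contribution: since $g(Z_0), g(Z_n)$ have $L^2$-norm equal to $\norm{g}_2$, the term $(g(Z_0) - g(Z_n))/\sqrt{n}$ tends to $0$ in probability, and the CLT transfers to $S_n\varphi/(\sigma\sqrt{n})$. The genuinely delicate part, which I expect to be the main obstacle, is the upgrade from the stationary initial law $\eta$ to $\delta_x$ for $\eta$-a.e.\ $x$. The martingale $M_n$ is defined trajectorially and does not depend on the initial law, so the natural route is to verify the hypotheses of a conditional-Lindeberg martingale CLT under $\Pp_x$: convergence of the conditional quadratic variation $\frac{1}{n}\sum_{k=0}^{n-1}\EE[D_k^2 \mid \mathcal{F}_k] \to \sigma^2(\varphi)$ follows from Birkhoff's pointwise ergodic theorem applied to $g^2 - (Qg)^2$ (whose $\eta$-integral is $\sigma^2(\varphi)$), valid for $\eta$-a.e.\ $x$; the uniform integrability of $D_k^2$ follows from $g \in L^2$; and the deterministic boundary contribution $g(x)/\sqrt{n}$ is harmless, while $g(Z_n)/\sqrt{n} \to 0$ in $\Pp_x$-probability since the law of $Z_n$ converges to $\eta$ by ergodicity. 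Combining these ingredients yields the $\delta_x$ statement and completes the proof.
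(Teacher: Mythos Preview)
The paper does not prove Theorem~\ref{thm Abstract CLT}: it is quoted from Gordin--Liv\v sic~\cite{Go78} as a black box, with no argument given. So there is no ``paper's proof'' to compare your attempt against.

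Your sketch is precisely the classical Gordin--Liv\v sic martingale-coboundary method, and the stationary part is correct as written. Two small points on the quenched upgrade. First, the claim ``uniform integrability of $D_k^2$ follows from $g\in L^2$'' is too loose: what you actually need is the conditional Lindeberg condition $\frac{1}{n}\sum_{k<n}\EE[D_k^2\,\ind_{\{|D_k|>\varepsilon\sqrt n\}}\mid\mathcal F_k]\to 0$, which one gets by applying Birkhoff to the functions $h_c(x):=\EE[D_0^2\,\ind_{\{|D_0|>c\}}\mid Z_0=x]$ for fixed $c$ and then letting $c\to\infty$ via dominated convergence. Second, the justification ``$g(Z_n)/\sqrt n\to 0$ in $\Pp_x$-probability since the law of $Z_n$ converges to $\eta$ by ergodicity'' is not right: ergodicity of the chain does not by itself give $\mathrm{Law}_{\Pp_x}(Z_n)\to\eta$. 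The standard fix is to apply Birkhoff to $g^2$, giving $\frac{1}{n}\sum_{k<n}g^2(Z_k)\to\norm{g}_2^2$ $\Pp_x$-a.s.\ for $\eta$-a.e.\ $x$, which forces $g^2(Z_n)/n\to 0$ a.s.\ and hence $g(Z_n)/\sqrt n\to 0$. With these corrections your outline is a complete and correct proof of the cited result.
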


We consider as in the previous subsection a smooth family of cocycles $\Lambda \ni t \mapsto \uA_t \in \cocycles$ satisfying Assumptions~\ref{assumptions}. We also consider the special observables $\psi_t$ and $\varphi_t$ from before. 

We introduced two related (families of) Markov operators acting on measurable bounded observables. They can actually be defined for arbitrary observables, namely if $\psi \colon \Pp^1  \to [-\infty, \infty)$ we put
$$\Qop_t \psi (\hat{v}) = \sum_{i \in \Ascr} p_i \psi(\hat{A}_t(i)\hat{v}) $$
and if $\varphi \colon \Ascr \times \Pp^1  \to [-\infty, \infty)$ we put
$$\bar{\Qop}_t \varphi(j,\hat{v}) = \sum_{i \in \Ascr}  p_i \varphi(i, \hat{A}_t(j) \hat{v}) .$$

Consider the projection:
\begin{align*}
&\pi \colon L^2(\Ascr \times \Pp^1) \to L^2(\Pp^1) \\ 
& \pi \varphi (\hat{v}) := \sum_{i \in \Ascr} p_i \varphi (i,\hat{v}).
\end{align*} 
Using this notation, $\pi \varphi_t = \psi_t$. Moreover, $\bar{\Qop}^{n+1}_t \varphi(i,\hat{v}) = \Qop^n_t (\pi \varphi)(\hat{A}_t(i)\hat{v})$.


\begin{lemma} \label{last lemma} For Lebesgue almost every parameter $t\in\Lambda$ we have that
$$
\sum_{n=0}^\infty \norm{ \bar{\Qop}^{n+1}_t\varphi_t - \int \varphi_t \; dp d\eta_t }_{L^2(p \times \eta_t)} < \infty.
$$
\end{lemma}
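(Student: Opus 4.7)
The plan is to combine the uniform $L^\infty$-ergodicity of $\bar{\Qop}_t$ from Corollary~\ref{cor strong mixing} with a truncation argument that handles the unboundedness of $\varphi_t$ on the singular part of $\Ascr$, and then to eliminate a Lebesgue-null set of parameters via Chebyshev and Borel-Cantelli. Concretely, for $N>c$ I would use the truncation $\varphi_{t,N}:=\max\{\varphi_t,-N\}$ of Section~\ref{StatProp} together with its complement $u_{t,N}:=\varphi_t-\varphi_{t,N}\le 0$, which is supported on $\{(i,\hat v):i\in\Asing,\ \varphi_t(i,\hat v)<-N\}$, and decompose $\bar\Qop^{n+1}_t\varphi_t-\int\varphi_t\,dp\,d\eta_t$ into a bounded piece and a tail piece accordingly.

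Since $\|\varphi_{t,N}\|_\infty\le N$, Corollary~\ref{cor strong mixing} controls the bounded contribution by $Ce^{-an}N$ in $L^\infty$, hence in $L^2(p\times\eta_t)$. For the tail contribution, Jensen's inequality together with the $\bar\Qop_t$-stationarity of $p\times\eta_t$ shows that $\bar\Qop_t$ is an $L^2(p\times\eta_t)$-contraction, so by the triangle inequality the tail term is bounded by $\|u_{t,N}\|_{L^2(p\times\eta_t)}+\int|u_{t,N}|\,dp\,d\eta_t$. Averaging over $t\in\Lambda$, Lemma~\ref{lemma Estimates phi and psi} (applied with $n=0$) gives $\int_\Lambda\int|u_{t,N}|\,dp\,d\eta_t\,dt\less e^{-N/3}$, while Lemma~\ref{lema 10} (with $n=0$) combined with the layer-cake formula yields $\int_\Lambda\|u_{t,N}\|^2_{L^2(p\times\eta_t)}\,dt\less e^{-N}$.

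Next I would set $N_n:=K\log n$ with $K>12$ and apply Chebyshev's inequality to the two averaged tail bounds above. This produces bad parameter sets $B^{(1)}_n,B^{(2)}_n\subset\Lambda$ of Lebesgue measure $\less n^{-K/2}$ and $\less n^{-K/6}$ respectively. Summability of these measures together with the Borel-Cantelli lemma implies that for Lebesgue-a.e. $t\in\Lambda$ there is $n_0(t)$ such that for all $n\ge n_0(t)$,
$$\Big\|\bar\Qop^{n+1}_t\varphi_t-\int\varphi_t\,dp\,d\eta_t\Big\|_{L^2(p\times\eta_t)}\less e^{-an}K\log n+n^{-K/4}+n^{-K/12},$$
which is summable in $n$. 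The main obstacle is balancing $N_n$: the bounded part requires $N_n$ to grow slowly in $n$ so that $e^{-an}N_n$ remains summable, while the Chebyshev--Borel-Cantelli step requires $N_n$ large enough that the gains from the $e^{-N}$ and $e^{-N/3}$ averaged bounds survive passing to square roots and still give summable Lebesgue measures of the bad sets. The logarithmic choice $N_n\sim\log n$ accommodates both; this is possible only because of the sharp exponential-in-$N$ decay in the averaged tail bounds, which in turn rests on the positive winding hypothesis (A2) via Lemma~\ref{lemma Leb measure of t in I}.
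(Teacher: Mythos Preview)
Your argument is correct, but the paper takes a different and somewhat more direct route. Rather than truncating $\varphi_t$ and invoking the abstract uniform ergodicity of $\bar\Qop_t$ (Corollary~\ref{cor strong mixing}), the paper unpacks the explicit renewal structure: using the identity $\bar\Qop_t^{n+1}\varphi_t(i,\hat v)=\Qop_t^n\psi_t(\hat A_t(i)\hat v)$ together with the decomposition~\eqref{Qop Iterates} and the explicit formula for $\eta_t$, one finds that the constant part of $\Qop_t^n\psi_t$ exactly cancels the first $n$ terms of $\int\psi_t\,d\eta_t$, leaving
\[
\Qop_t^n\psi_t(\hat w)-\int\psi_t\,d\eta_t=\sum_{\uom\in\Ainv^n}p(\uom)\,\psi_t(\hat A_t^n(\uom)\hat w)-\sum_{s\in\Asing}p_s\sum_{j\ge n}\sum_{\uom\in\Ainv^j}p(\uom)\,\psi_t(\hat A_t^j(\uom)\hat r_s).
\]
Since the total probability weight carried by $\Ainv^n$ is $(1-q)^n$, Jensen plus Lemma~\ref{lem exp small measure}(iii) give $\int_\Lambda\|\bar\Qop_t^{n+1}\varphi_t-\int\varphi_t\|_{L^2}^2\,dt\less(1-q)^n$ directly; Chebyshev at threshold $(1-q)^{n/2}$ and Borel--Cantelli finish. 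Thus the paper obtains an exponential bound $\sigma^{n/4}$ on the $L^2$ norm for a.e.\ $t$, while your truncation scheme yields only polynomial decay $n^{-K/12}$ (for any fixed $K$). Both are summable, so both proofs work; the paper's approach is sharper and avoids the balancing of $N_n$ altogether, whereas your approach is more modular, relying on the abstract mixing result and the truncation lemmas already developed for the LDT, rather than on the precise cancellation between $\Qop_t^n\psi_t$ and the stationary integral.
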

\begin{proof}
By the previous observations,
\begin{align*}
\sum_{n=0}^\infty \norm{ \bar{\Qop}^{n+1}_t\varphi_t - \int \varphi_t \; dp d\eta_t }_{L^2} &= \sum_{n=0}^\infty \norm{ \Qop^n_t \psi_t(\hat{A}_t(i)\hat{v}) - \int \psi_t \; d\eta_t }_{L^2} \\
&\kern-4em = \sum_{n=0}^\infty \left( \sum_{i \in \Ascr} p_i  \int \left| \Qop^n_t \psi_t(\hat{A}_t(i)\hat{v}) - \int \psi_t \; d\eta_t \right|^2 \; d\eta_t(\hat{v}) \right)^{\frac{1}{2}}.
\end{align*}
We start by showing that for every $i \in \Ascr$ there exists $\sigma \in (0,1)$ such that for every $n \in \N$
$$
\int_{\Lambda} \int_{\Pp^1}  \left| \Qop^n_t \psi_t(\hat{A}_t(i)\hat{v}) - \int \psi_t \; d\eta_t \right|^2 \; d\eta_t(\hat{v}) \;dt \lesssim \sigma^n.
$$ 
Fix $i \in \Ascr$ and note that by lemma~\ref{lem exp small measure} item (iii) and Jensen's inequality,
\begin{align*}
&\int_{\Lambda} \int_{\Pp^1}  \left| \Qop^n_t \psi_t(\hat{A}_t(i)\hat{v})  - \int \psi_t \; d\eta_t \right|^2 \; d\eta_t(\hat{v}) \;dt \\
&= \int_{\Lambda} \int_{\Pp^1} \Big| \sum_{\uom \in \Ainv^n} p(\uom) \psi_t(\hat{A}^n_t(\uom)\hat{A}_t(i)\hat{v}) \\
& - \sum_{s \in \Asing}p_s \sum_{j=n}^\infty \sum_{\uom \in \Ainv^j} p(\uom) \psi_t(\hat{A}_t^j(\uom)r_s)  \Big|^2d\eta_t(\hat{v})dt \\
&\leq 2 \int_{\Lambda} \int_{\Pp^1} \sum_{\uom \in \Ainv^n} p(\uom) \left| \psi_t(\hat{A}^n_t(\uom)\hat{A}_t(i)\hat{v}) \right| ^2 \; d\tilde{\eta}(t,\hat{v})  \\
&+ 2 \int_{\Lambda} \int_{\Pp^1} \sum_{s \in \Asing}p_s \sum_{j=n}^\infty \sum_{\uom \in \Ainv^j} p(\uom) \left| \psi_t(\hat{A}_t^j(\uom)r_s) \right| ^2  \; d\tilde{\eta}(t,\hat{v}) \\
&\less \sum_{\uom \in \Ainv^n} p(\uom)  +  \sum_{s \in \Asing}p_s \sum_{j=n}^\infty \sum_{\uom \in \Ainv^j} p(\uom)  \less (1-q)^n 
\end{align*}
so the claim holds with $\sigma=1-q$.

By Chebyshev's inequality, 
$$
\Leb \left\{ t \in \Lambda \colon \int_{\Pp^1} \left|  \Qop^n_t \psi_t(\hat{A}_t(i)\hat{v}) - \int \psi_t \; d\eta_t  \right|^2 \;d\eta_t (\hat{v}) > \sigma^\frac{n}{2}   \right\} \less \sigma^\frac{n}{2}.
$$

Therefore, for every $i \in \Ascr$ and for each $n \in \N$, there exists a set $B_n(i) \subset \Lambda$, with $\Leb(B_n(i)) \less\sigma^{\frac{n}{2}}$ such that for every $t \notin B_n(i)$,
\begin{equation} \label{Eq Gordin Livsic}
\norm{\bar{\Qop}_t^{n+1} \varphi_t - \int_{\Ascr \times \Pp^1} \varphi_t \; d p d\eta_t}_{L^2(p \times \eta_t)} \lesssim \sigma^{\frac{n}{2}}.
\end{equation}

Moreover, since there is a finite number of symbols, there exists a set $B_n$ that satisfies the same properties for all symbols $i\in\Ascr$ simultaneously. Furthermore, since $\sum_{n=0}^\infty \Leb(B_n) < \infty$, by the Borel-Cantelli lemma, for almost every $t \in \Lambda$, there exists $n_0(t) \in \N$ such that for every $n \geq n_0(t)$, $t \notin B_n$. Thus for almost every $t$, the inequality \eqref{Eq Gordin Livsic} holds, hence
$$
\sum_{n=0}^\infty \norm{ \bar{\Qop}^{n+1}_t\varphi_t - \int \varphi_t \; d p d\eta_t }_{L^2(p \times \eta_t)} < \infty ,
$$
which proves the lemma.
\end{proof}

\medskip

We are ready to formulate and to prove a parametric version of the CLT for cocycles with singular components.

\begin{theorem}\label{Parametric CLT}
Let $A \colon \Lambda \to \cocycles$ be a smooth family of cocycles satisfying Assumptions~\ref{assumptions}. Then for almost every $t \in \Lambda$ there exists $\sigma=\sigma(t)>0$ such that 
$$
\frac{\log \norm{A_t^n v}-n\, L_1(A_t)}{\sigma\, \sqrt{n}} \stackrel{d}{\to} \mathcal{N}(0,1)
$$
holds for $\eta_t$-a.e. $\hat v \in \Pp^1$ and, moreover, 
$$
\frac{\log \norm{A_t^n\, }-n\, L_1(A_t)}{\sigma\, \sqrt{n}} \stackrel{d}{\to} \mathcal{N}(0,1) .
$$
\end{theorem}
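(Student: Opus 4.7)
\smallskip

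The plan is to apply the Gordin-Liv\v sic abstract CLT (Theorem~\ref{thm Abstract CLT}) to the Markov chain $Z_n(\om,\hat v) = (\om_{n+1}, \hat A_t^n(\om)\hat v)$ on $\Ascr \times \Pp^1$, whose Markov operator is $\bar\Qop_t$ and whose unique stationary measure is $p\times \eta_t$ (Corollary~\ref{cor strong mixing}). Applied to the centered observable $\tilde\varphi_t := \varphi_t - L_1(A_t)$, which has zero mean by Furstenberg's formula~\eqref{F formula 10}, the theorem will yield convergence in distribution of $S_n\tilde\varphi_t / (\sigma\sqrt{n})$ to $\Nscr(0,1)$, with initial distribution $\delta_{(i,\hat v)}$ for $(p\times\eta_t)$-a.e.\ $(i,\hat v)$, hence for $\eta_t$-a.e.\ $\hat v$. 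As noted in the proof of Theorem~\ref{Parametric LDT}, $S_n\varphi_t(\om,\hat v) = \log\norm{A_t^n(\om)v}$, so this is precisely the first claim of the theorem.

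The two hypotheses of Gordin-Liv\v sic must be checked for Lebesgue-a.e.\ $t \in \Lambda$. First, $\tilde\varphi_t \in L^2(p\times\eta_t)$ because $\int \varphi_t^2\,d(p\times\eta_t) \le C$ by Lemma~\ref{lem exp small measure}(iii) (applied with $n=0$, so after integrating in $t$ as well the function $t\mapsto \norm{\varphi_t}_{L^2(p\times\eta_t)}$ is finite a.e.). Second, the summability $\sum_{n=0}^\infty \norm{\bar\Qop_t^{n+1}\varphi_t - \int\varphi_t\,d(p\times\eta_t)}_{L^2(p\times\eta_t)} < \infty$ for Lebesgue-a.e.\ $t$ is exactly the content of Lemma~\ref{last lemma}.

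The main obstacle is the positivity of the asymptotic variance $\sigma^2(t) := \sigma^2(\tilde\varphi_t)$ for a.e.\ $t \in \Lambda$. If $\sigma^2(t)=0$, the Gordin-Liv\v sic decomposition forces $\tilde\varphi_t = g_t - \bar\Qop_t g_t$ for some $g_t \in L^2(p\times\eta_t)$. Summing along the chain, this rigidly constrains $\log\norm{A_t^n(\om)v} - n L_1(A_t)$ to be a bounded telescoping difference $g_t(Z_0) - g_t(Z_{n})$ in $L^2$, a very strong restriction on the cocycle $\uA_t$. I would rule this out on a full-measure set of parameters by a parameter elimination argument: exploit Assumption~(A2) (positive winding) to show that the family $t \mapsto \log\norm{A_t^n(\om)v}$ has a.s.\ positive rotation-driven fluctuations at each scale $n$, so the cohomological equation $\tilde\varphi_t \equiv g_t - \bar\Qop_t g_t$ can hold for at most a Lebesgue-null set of parameters. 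This is analogous in spirit to the analytic/winding arguments employed in~\cite{DDGK-paper1} and the LDT parameter elimination above.

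For the matrix norm statement, for Lebesgue-a.e.\ $t$, positive winding ensures that $\supp \eta_t$ contains at least two points corresponding to linearly independent unit vectors $e_t^1,e_t^2$ (exactly as in the closing paragraph of the proof of Theorem~\ref{Parametric LDT}). Moreover, since every infinite word $\om$ contains a singular letter $\mu$-a.s., the iterate $A_t^n(\om)$ has rank one for $\mu$-a.e.\ $\om$ once $n$ is large enough, so writing its rank-one decomposition gives $\norm{A_t^n(\om)v} = \norm{A_t^n(\om)}\,\abs{\cos\theta_n(\om,v)}$, where $\theta_n(\om,v)$ is the angle between $v$ and the orthocomplement of $\ker A_t^n(\om)$. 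For at least one of $v \in \{e_t^1, e_t^2\}$, $\abs{\cos\theta_n(\om,v)}$ is bounded away from $0$ in $\mu$-probability (a bad kernel direction cannot be simultaneously close to two non-parallel vectors), whence $\log\norm{A_t^n(\om)} - \log\norm{A_t^n(\om)v} = O(1)$ in probability. Dividing by $\sigma\sqrt{n}$, this $O(1)$ error vanishes in probability, so the CLT for $\log\norm{A_t^n v}$ transfers to $\log\norm{A_t^n}$, completing the proof.
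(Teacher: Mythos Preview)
Your overall architecture matches the paper's: apply Gordin--Liv\v sic to $\bar\Qop_t$ with observable $\tilde\varphi_t$, invoke Lemma~\ref{last lemma} for the summability hypothesis, and then transfer the vector CLT to the norm CLT. The two substantive gaps are in how you handle the positivity of the variance and the norm transfer.

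\medskip

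\textbf{Positivity of $\sigma^2(t)$.} Your argument here is essentially a placeholder. The decomposition $\tilde\varphi_t = g_t - \bar\Qop_t g_t$ holds \emph{always}, not just when $\sigma^2(t)=0$; what vanishing variance actually gives you is that $g_t(Z_1) = \bar\Qop_t g_t(Z_0)$ almost surely, i.e.\ $g_t$ is constant along transitions. You gesture at ``positive rotation-driven fluctuations'' ruling this out, but give no mechanism. The paper's argument is quite different and more concrete: from $\sigma^2(t)=0$ one deduces (using uniform ergodicity of $\bar\Qop_t$, not just stationarity) that $g_t$ is $p\times\eta_t$-a.e.\ constant, hence $\tilde\varphi_t = 0$ a.e., i.e.\ $\varphi_t$ is $p\times\eta_t$-constant. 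This is then excluded via the dichotomy of~\cite[Theorem~1.1]{DDGK-paper1}: either $\uA_t$ is approximable by cocycles with null words, in which case $\varphi_t$ takes arbitrarily negative values on $\supp\eta_t$ and cannot be constant; or $\uA_t$ is projectively uniformly hyperbolic, in which case (after discarding a null set of diagonalizable parameters) $\supp\eta_t$ is infinite, and $\varphi_t$ constant on an infinite set would force a conformal word, contradicting hyperbolicity. This is a structural argument about the cocycle, not a parameter-elimination argument via winding.

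\medskip

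\textbf{Norm transfer.} Your rank-one argument has a quantifier problem: which of $e_t^1,e_t^2$ avoids the kernel direction depends on $\om$, so you have not produced a \emph{single} $v$ for which $\log\norm{A_t^n} - \log\norm{A_t^n v}$ is $O(1)$ in probability. The paper fixes one $v\in\supp\eta_t$ and shows the sequence $\norm{A_t^n(\om)v}/\norm{A_t^n(\om)}$ is a.s.\ bounded below, via Oseledets: the lower bound is $|v\cdot\bar\mostv(A_t^n(\om))| \to |v\cdot\bar\mostv^\infty(\om)|$, and $|v\cdot\bar\mostv^\infty(\om)|=0$ would mean $v\in E^s(\om)$. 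But $E^s(\om)$ consists of preimages of \emph{kernels} while $v\in\supp\eta_t$ is an image of a \emph{range}, so equality would entail a null word, which occurs only for a Lebesgue-null set of $t$. Slutsky then finishes.
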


\begin{proof}
If $A_t$ is projectively uniformly hyperbolic for every $t\in \Lambda$ then  the cocycle reduces to a $1$-dimensional cocycle to which the classical CLT applies. Hence we assume from now on that the cocycle family $A_t$ is not projectively uniformly hyperbolic for some $t_0\in \Lambda$. By  Theorem~\ref{ManeBochiMat2} there exist parameters $\Lambda\ni t_n\to t_0$ for which $A_{t_n}$ admits a null word, which implies a singularity $L_1(A_{t_n})=-\infty$.
 
 Consider the special observable $\varphi_t$ defined before and recall from Lemma~\ref{last lemma} that for Lebesgue almost every parameter $t\in\Lambda$ we have  
 $$\sum_{n=0}^\infty \norm{ \bar{\Qop}^{n}_t\varphi_t - \int \varphi_t \; dp d\eta_t }_{L^2(p \times \eta_t)} < \infty .$$  
 
This in particular allows us to define
$$
g_t := \sum_{n=0}^\infty \bar{\Qop}^n_t \left(\varphi_t - \int \varphi_t \; dp d\eta_t\right) \in L^2 (p\times\eta_t) .
$$

Then  for almost every parameter $t$,
$$\varphi_t - \int \varphi_t \; dp d\eta_t = g_t - \bar{\Qop}_t g_t .$$
 
 Let $\sigma^2 (t) = \sigma^2_t(\varphi_t) := \norm{g_t}_2^2 - \norm{\bar{\Qop}_t g_t}^2_2 \in [0, \infty)$.
 
 In order to apply Theorem~\ref{thm Abstract CLT}  it remains to prove that $\sigma^2 (t)>0$ for almost every $t \in \Lambda$. We accomplish this in several steps.

\smallskip

We recall the following definitions and results.  
A cocycle is said to be \emph{projectively uniformly hyperbolic} if it admits a dominated splitting~\cite[Definition~3.1]{DDGK-paper1}.  

In the Bernoulli case, an \emph{invariant multi-cone} is a non-empty, non-dense open subset 
$M \subset \Pp^1$ such that $\overline{A_i M} \subset M$ for every $i \in \Ascr$.  
In~\cite[Theorem~3.1]{DDGK-paper1} we proved that a cocycle $\uA$ is projectively uniformly hyperbolic if and only if it admits an invariant multi-cone.

Assume now that the invertible cocycle $\uA_{\mathrm{inv}} = (A_i)_{i \in \Ainv}$ is projectively uniformly hyperbolic, and let $M$ be an invariant multi-cone.  
Define
\begin{align*}
\Kinv^u &= \bigcap_{n=0}^{\infty} \bigcup_{i_1,\dots,i_n \in \Ainv} A_{i_n}\cdots A_{i_1}(M), \\
\Kinv^s &= \bigcap_{n=0}^{\infty} \bigcup_{i_1,\dots,i_n \in \Ainv} (A_{i_n}\cdots A_{i_1})^{-1}(\Pp^1 \setminus \overline{M}).
\end{align*}

These sets are, respectively, the closures of the unstable and stable Oseledets directions of $\uA_{\mathrm{inv}}$ (see~\cite[Proposition~3.1]{DDGK-paper1}).

\begin{definition}
	A smooth family of cocycles $\uA \colon \Lambda \to \Matdm^k$ is called \emph{rich} if for every $p_0 \in \Lambda$ there exists a smooth curve $t \mapsto \alpha(t) \in \Lambda$ through $p_0$ such that the one-parameter family $\uA(\alpha(t))$ is positively winding.
\end{definition}

The following theorem was proved in~\cite[Theorem~4.1]{DDGK-paper1}.

\begin{theorem} \label{ManeBochiMat2}
	Let $\uA \colon \Lambda \to \Matdm^k$ be a rich smooth family of cocycles on an open set $\Lambda \subseteq \R^m$, and suppose there exists $i \in \{1, \dots, k\}$ such that $\rank(A_i(t)) = 1$ for every $t \in \Lambda$.  
	Then, for each $t \in \Lambda$, either $\uA(t)$ is projectively uniformly hyperbolic, or there exists a sequence $t_n \to t$ in $\Lambda$ such that $\uA(t_n)$ has a periodic null word, hence $L_1(\uA(t_n)) = -\infty$ for all $n \in \N$.
\end{theorem}

For the Markov case, we refer to the companion paper~\cite{DDGK-paper1}, in particular Definitions~3.1 and~3.5, Proposition~3.1, and Theorem~1.1.

\begin{lemma}\label{prev lemma}
Let $\mathcal{H}$ be the set of parameters $t\in\Lambda$ such that the corresponding cocycle $\uA_t$ is projectively uniformly hyperbolic.  For Lebesgue almost every $t \in \mathcal{H}$, the corresponding stationary measure $p \times \eta_t$ has infinite support.
\end{lemma}

\begin{proof}
Firstly note that if the parameter $t \in \mathcal{H}$ is such that $\uA_t$ is 
diagona\-lizable, then the ranges of the singular matrices and the unstable directions of the invertible matrices are aligned. Therefore, the support of the corresponding stationary measure has only one point. However, this case only happens with zero measure, since this alignment will be undone by the winding property as the parameter $t$ varies. Note that singular matrices remain constant in the process.

Hence we only need to consider the case in which the cocycle $\uA_t$ is not diagonalizable. Note that if $\uA_t$ is projectively uniformly hyperbolic, then so is its invertible part $\uA_{{\rm inv}} (t) := (A_t (i))_{i\in \Ainv}$. Hence we can define $\Kinv^u$, the  set of  unstable Oseledets directions $E^u(\omega)$ of the cocycle $(A_i)_{i\in \Ainv}$ over the set of points $\omega\in \Ainv^\Z$. We divide the proof in two cases: $\# \Kinv^u = 1$ and $\# \Kinv^u >1$.

First, we suppose that $\# \Kinv^u = 1$. Since we assume that $\uA_t$ is not diagonalizable, there is no range in $\Kinv^u$. Therefore, the iterates of the ranges are going to spread into infinitely many different points in the projective space by the projective uniform hyperbolic dynamics (in fact, the iterates will converge to $K^u_{\rm{inv}}$). Since the corresponding stationary measure $p \times \eta_t$ is discrete and gives positive weight to every pair of the form $(j, \hat A^n_t(\uom) \hat r_s)$, we conclude that the support of $\eta_t$ is infinite. 

Now suppose that $\#\Kinv^u > 1$.  
In this case, $\Kinv^u$ is in fact infinite, as it is the limit set of a contracting iterated function system; it may be a Cantor set, an interval, or a combination of both.  

If any range lies outside $\Kinv^u$, the proof proceeds as in the previous case, so we may assume that every range $r_i$ is contained in $\Kinv^u$.  

Moreover, since $\uA_{\mathrm{inv}}$ is projectively uniformly hyperbolic (and topologically mixing in the Markov case), for any $\hat v \in \Kinv^u$ and any point in an invariant multi-cone of $\uA_{\mathrm{inv}}$,   in particular for any range $\hat r_i$,  there exists an admissible word $\omega \in \Ainv^\N$ such that
\[
\hat v = \lim_{n \to \infty} \hat A_t^n(\omega)\, \hat r_i.
\]
Hence, by a suitable choice of $\omega$, one can iterate any range $r_i$ by $A_t^n(\omega)$ so as to converge to any prescribed element of $\Kinv^u$. 
Since the corresponding stationary measure $p \times \eta_t$ is discrete and gives positive weight to every pair of the form $(j, \hat A^n_t(\uom) \hat r_s)$, we conclude that the support of $\eta_t$ is infinite. 
\end{proof} 

\begin{lemma}\label{lemma observable not constant}
For Lebesgue almost every parameter $t \in \Lambda$, the obser\-vable $\varphi_t$  cannot be $p \times \eta_t$ constant. 
\end{lemma}

\begin{proof}
By Theorem~\ref{ManeBochiMat2},  for every $t \in \Lambda$,  either $\uA_t$ is projectively uniformly hyperbolic or it can be approximated by cocycles that admit null words. The proof is then divided into two cases. 

First let us consider the set $\mathcal{B}$ of parameters $t \in \Lambda$ such that $\uA_t$ is not projectively uniformly hyperbolic. Note that there exists $c>0$ such that for every $t \in \mathcal{B}$ and all $j \in \Ascr$ and $i \in \Asing$,  it holds that $|\varphi_t(j, \hat{r}_i)| \leq c$. Moreover, by Theorem~\ref{ManeBochiMat2} there exists $t'$ arbitrarily close to $t$ such that $\uA_{t'}$ has a null word. Hence, by continuity, there exists some range $r_s$ and a finite word $\uom$ such that $\norm{A_t^n(\uom)r_s}$ is arbitrarily small, thus $|\varphi_t| \gg c$. Therefore, we conclude that for every $t \in \mathcal{B}$, $\varphi_t$ is not constant.

By Lemma~\ref{prev lemma} we have that for Lebesgue almost every $t \in \mathcal{H}$, there are infinitely many points in the support of the corresponding stationary measure $p \times \eta_t$. Moreover, $\varphi_t$ is $p \times \eta_t$-constant if and only if it takes the same value at  every point in the support of $p \times \eta_t$. The only way for this to happen is if $\uA_t$ had a conformal word (with a pair of non real eigenvalues). In fact, the presence of a conformal word implies that this word is projectively elliptic, hence not projectively uniformly hyperbolic. Therefore, we conclude that for almost every $t \in \mathcal{H}$, the observable $\varphi_t$ is not $p \times \eta_t$ constant, which finishes the proof.
\end{proof}

The next lemma establishes the positivity of the variance type quantity $\sigma^2 (t)$ for almost every $t \in \Lambda$ by an adaptation of the proof of~\cite[Proposition 2.2]{CDK-paper3}. We note that this proposition is a version of the abstract CLT of Gordin and Liv\v sic, which is more applicable to dynamical systems. 

\begin{lemma}\label{lemma positive variance}
For almost every $t \in \Lambda$ we have $\sigma^2(t)>0$. 
\end{lemma}
\begin{proof}
Consider the family of Markov operators $\bar{\Qop}_t \colon L^\infty(\Ascr \times \Pp^1) \to L^\infty(\Ascr \times \Pp^1) $, given by $(\bar{\Qop}_t \varphi)(j,\hat v) = \sum_{ i\in\Ascr}  \varphi(i,\hat A_t(j)\, \hat v)\, p_{i}$ and the corres\-ponding family of Markov kernels $\bar{K}_t \colon \Ascr \times \Pp^1 \to \Prob(\Ascr \times \Pp^1)$. Recall that the measure $p \times \eta_t$ is $\bar{K}_t$-stationary, in the sense that $(\bar{K}_t)_* p \times \eta_t =p \times  \eta_t$.

Fix any $t \in \Lambda$. Assume by contradiction that $\sigma_t^2(\varphi)=0$. Then
	\begin{align*}
		0 &\leq \int (\bar{\Qop}_t \, g_t (x) - g_t(y))^2 \; d\bar{K}_{t,x}(y) dp \times \eta_t(x) \\
		&= \int \left\{ (\bar{\Qop}_t \, g_t (x))^2 + (g_t(y))^2 -2 g_t(y)\bar{\Qop}_t g_t (x)\right\} \; d\bar{K}_{t,x}(y) dp \times \eta_t(x) \\
		& = \int \left\{   (g_t(y))^2 - (\bar{\Qop}_t \, g_t (x))^2\right\} \; d\bar{K}_{t,x}(y) dp \times \eta_t(x) \\
		&=\norm{g_t}_2^2 - \norm{\bar{\Qop}_t \, g _t}_2^2 = \sigma^2 (t) = 0.
	\end{align*}
	
We then conclude that $g_t(y) = \bar{\Qop}_t g_t(x)$ for $p \times \eta_t$-a.e $x$ and $\bar{K}_{t,x}$-a.e $y$. 
In particular, define
\[
X_j(\omega) := \hat A_t^j(\omega)\, \hat r_i,
\]
where $\omega \in \Ascr^\N$ is a periodic admissible word with 
$\omega_0 = \omega_m = i \in \Asing$. Then
\begin{align*}
	\log \norm{A_t^n(\omega)\, r_i} - n\, L_1(\uA_t) 
	&= \sum_{j=0}^{n-1} 
	\left[ \varphi_t\bigl(X_j(\omega)\bigr) 
	- \int \varphi_t \, d p \, d \eta_t \right] \\
	&= \sum_{j=0}^{n-1} 
	\Bigl[g_t\bigl(X_j(\omega)\bigr) - (\bar \Qop_t g_t)\bigl(X_j(\omega)\bigr)\Bigr] \\
	&= \sum_{j=0}^{n-1} 
	\Bigl[g_t\bigl(X_j(\omega)\bigr) - g_t\bigl(X_{j+1}(\omega)\bigr)\Bigr] \\
	&= g_t\bigl(X_0(\omega)\bigr) - g_t\bigl(X_n(\omega)\bigr) \\
	&= 0 .
\end{align*}
This telescoping sum shows that for every $n$-periodic orbit $\omega$ containing the singular symbol $i \in \Asing$ its dominant eigenvalue
satisfies
\[
|\lambda_{\text{max}}\bigl(A_t^n(\omega)\bigr)| = e^{n\, L_1(\uA_t)} .
\]
In particular, for the fixed point $\omega \equiv i$, 
\[
 \beta:= \bigl|\langle r_i, A_i\, r_i \rangle\bigr|= e^{ L_1(\uA_t) }
\]
 is independent of the parameter $t$.
Consequently, for any admissible $n$-periodic orbit $\omega$ with $\omega_0 = \omega_n = i$,
\[
\log \bigl|\langle r_i, A_t^n(\omega)\, r_i \rangle\bigr| = n\,\log \beta .
\]
Observe that the left-hand side defines a non-constant harmonic function of $t$, except possibly at its singularities where vanishing words occur 
(see the beginning of this proof).  Therefore, the above equality can only be satisfied for a discrete set of parameter values $t \in \Lambda$. For all other values of $t$, the equality fails, yielding a contradiction. It follows that for a.e. $t\in\Lambda$,
\[
\sigma^2(t) > 0,
\]
as claimed.
\end{proof}

Theorem~\ref{thm Abstract CLT} is then applicable to the Markov kernel $\bar K_t$ on  $\Ascr\times\Pp^1$ and the observable $\varphi_t$, for Lebesgue almost every $t\in\Lambda$. We then conclude that for $\eta_t$-a.e. point $\hat v \in \Pp^1$, 
$$
\frac{\log \norm{A_t^n\, v}-n\, L_1(A_t)}{\sigma\, \sqrt{n}} \stackrel{d}{\to} \mathcal{N}(0,1) ,
$$
thus establishing the first statement of the theorem.

To prove the second statement, with $\norm{A_t^n\, v}$ replaced by $\norm{A_t^n}$, we choose a unit vector $v$ for which the CLT holds and note that

$$  \frac{\log \norm{A_t^n}-n\, L_1(A_t)}{\sigma\, \sqrt{n}}  = \frac{\log \norm{A_t^n\, v}-n\, L_1(A_t)}{\sigma\, \sqrt{n}} 
+ \frac{\log \norm{A_t^n} - \log \norm{A_t^n\, v}}{\sigma\, \sqrt{n}} \, .$$

We claim that the sequence $\log \norm{A_t^n} - \log \norm{A_t^n\, v}$ is almost surely bounded, hence the last term above converges to $0$ almost surely, which would then conclude the argument via Slutsky's theorem. 

The claim clearly follows from the a.s. boundedness from below by a positive constant of the sequence $\frac{\norm{A_t^n\, v}}{\norm{A_t^n}} $. 
Let $\bar \mostv (A)$ denote the most expanding singular direction of the matrix $A$. By the Pythagorean's theorem,
$$\frac{\norm{A_t^n (\om) \, v}}{\norm{A_t^n (\om)}}  \ge \abs{ v \cdot \bar \mostv (A_t^n (\om))} \to \abs{ v \cdot \bar \mostv^\infty (A_t) (\om) } .$$

The limit direction $\bar \mostv^\infty (A_t) (\om)$ exists almost surely by arguments in the proof of Oseledets' theorem (see for instance~\cite[Proposition 4.4]{DK-book}) and $\bar \mostv^\infty (A_t) (\om)^\perp = E^s (\om)$, the stable subspace in the Oseledets theorem (see for instance the beginning of the proof of~\cite[Theorem 4.4]{DK-book}).  

We must have that $\abs{ v \cdot \bar \mostv^\infty (A_t) (\om) } > 0$ almost surely (which then establishes the claim). Otherwise, on a set of positive measure we would have that  $v \in \bar \mostv^\infty (A_t) (\om)^\perp = E^s (\om)$, which is not possible. Indeed, $E^s (\om)$ consists of the pre-images of the {\em kernel} $k_{\om_n}$ via matrix products $A_t^n (\om)$, for some $n \in \N$ such that $\om_n \in \Asing$ and $\om_0, \dots, \om_{n-1}\in \Ainv$. On the other hand, $\hat v$ is in the support of $\eta_t$, which consists of images of {\em ranges} $\hat r_s$ via matrix products $A_t^n (y)$. Thus $v \in E^s (\om)$ would imply the existence of null words, and in particular the fact that $L_1 (\uA_t) = - \infty$. But those parameters $t$ form a zero measure set.  
\end{proof}

Theorem~\ref{thm: clt intro}, the non-parametric version of the CLT stated in the introduction, can be derived form the parametric version exactly the same way the non-parametric LDT theorem~\ref{thm: ldt intro} was derived from the parametric LDT theorem~\ref{Parametric LDT}. Thus Theorem~\ref{thm: clt intro} holds as well.

\begin{remark}
We note that the statistical properties derived above  are sensitive to perturbations of the cocycle, that is, the parameters appearing in these estimates are not locally uniform.
\end{remark}

\begin{remark}\label{ldt-clt rank=1}
Projectively uniformly hyperbolic cocycles automatically satisfy uniform LDT estimates and a CLT, since these properties  can be immediately reduced to the corresponding limit laws for i.i.d. additive processes in classical probabilities. In particular, by~\cite[Theorem 4.2]{DDGK-paper1}, if all components of $\uA \in \Mat_2 (\R)^k$ are singular and $L_1 (\uA) > - \infty$ then $\uA$ satisfies uniform LDT estimates and a CLT.
\end{remark}
\end{subsection}

\subsection*{Acknowledgments}

P.D. was partially supported by FCT - Funda\c{c}\~{a}o para a Ci\^{e}ncia e a Tecnologia, through
the projects   UIDB/04561/2020 and  PTDC/MAT-PUR/29126/2017.
M.D. was supported by a CNPq doctoral fellowship and  also by the Coordena\c{c}\~{a}o de Aperfei\c{c}oamento de Pessoal de N\'{i}vel Superior - Brasil (CAPES) - Finance Code 001.
T.G. was supported by FCT - Funda\c{c}\~{a}o para a Ci\^{e}ncia e a Tecnologia, through the projects UI/BD/152275/2021 and by CEMS.UL Ref. UIDP/04561/2020, DOI 10.54499/UIDP/04561/ 2020.
S.K. was suppor\-ted by the CNPq research grant 313777/2020-9 and  by the Coordena\c{c}\~ao de Aperfei\c{c}oamento de Pessoal de N\'ivel Superior - Brasil (CAPES) - Finance Code 001.

\bibliographystyle{amsplain}
\bibliography{bib}

\end{document}